\newcommand{\vv}[1]{\overrightarrow{#1}}
\newtheorem{theorem}{Theorem}[section]
\newtheorem{proposition}[theorem]{Proposition}
\newtheorem{lemma}[theorem]{Lemma}
\newtheorem{corollary}[theorem]{Corollary}
\newtheorem{definition}[theorem]{Definition}
\newcommand{\be}{\begin{enumerate}}
\newcommand{\ee}{\end{enumerate}}
\newcounter{ct}
\newcommand{\beq}{\begin{equation}}
\newcommand{\eeq}{\end{equation}}
\newcommand {\bua} {\begin{eqnarray*}}
\newcommand {\eua} {\end {eqnarray*}}
\newcommand{\details}[1]{}
\newcommand{\limn}{\ds\lim_{n\to\infty}}
\newcommand{\ds}{\displaystyle}
\def\N{{\mathbb N}}
\title{An abstract proximal point algorithm}
\author{Lauren\c tiu Leu\c stean${}^{a,b,c}$, Adriana Nicolae${}^{d,e}$ and Andrei Sipo\c s${}^{f,c}$\\[0.2cm]
\footnotesize ${}^a$The Research Institute of the University of Bucharest (ICUB), University of Bucharest, \\
\footnotesize M. Kog\u{a}lniceanu 36-46, 050107, Bucharest, Romania\\
\footnotesize ${}^b$Faculty of Mathematics and Computer Science, University of Bucharest,\\
\footnotesize Academiei 14, 010014, Bucharest, Romania\\[0.1cm]
\footnotesize ${}^c$Simion Stoilow Institute of Mathematics of the Romanian Academy,\\
\footnotesize Calea Grivi\c{t}ei 21, 010702, Bucharest, Romania\\[0.1cm]
\footnotesize ${}^d$Department of Mathematical Analysis - IMUS, University of Seville,\\
\footnotesize C/ Tarfia s/n, 41012 Sevilla, Spain\\[0.1cm]
\footnotesize ${}^e$Department of Mathematics, Babe\c s-Bolyai University,\\
\footnotesize Kog\u alniceanu 1, 400084 Cluj-Napoca, Romania\\[0.1cm]
\footnotesize ${}^f$Department of Mathematics, Technische Universit\"at Darmstadt,\\
\footnotesize Schlossgartenstrasse 7, 64289 Darmstadt, Germany\\
\footnotesize E-mails: laurentiu.leustean@unibuc.ro,  anicolae@math.ubbcluj.ro, sipos@mathematik.tu-darmstadt.de\\
}
\date{}
\begin{document}

\maketitle

\begin{abstract}
The proximal point algorithm is a widely used tool for solving a variety of convex optimization problems such as finding zeros of maximally monotone operators, fixed points of nonexpansive mappings, as well as minimizing convex functions. The algorithm works by applying successively so-called ``resolvent'' mappings associated to the original object that one aims to optimize. In this paper we abstract from the corresponding resolvents employed in these problems the natural notion of jointly firmly nonexpansive families of mappings. This leads to a streamlined method of proving weak convergence of this class of algorithms in the context of complete CAT(0) spaces (and hence also in Hilbert spaces). In addition, we consider the notion of uniform firm nonexpansivity in order to similarly provide a unified presentation of a case where the algorithm converges strongly. Methods which stem from proof mining, an applied subfield of logic, yield in this situation computable and low-complexity rates of convergence.\\

\noindent {\em Keywords:} Convex optimization; Proximal point algorithm; CAT(0) spaces;  Jointly firmly nonexpansive families;  Uniformly firmly nonexpansive mappings; Proof mining; Rates of convergence.\\

\noindent  {\it Mathematics Subject Classification 2010}: 90C25,  46N10, 47J25,  47H09, 03F10
\end{abstract}

\section{Introduction}

The first instance of what came later to be known as the proximal point algorithm can be found 
in a short communication from 1970 of Martinet \cite{Mar70}. He considered (among others) the 
issue of solving the minimization problem
$${\arg\!\min}_{x \in C} f(x),$$
where $C$ is a closed convex subset of a Hilbert space $H$ and $f$ is a (real-valued) lower 
semicontinuous convex function defined on $C$, that further has the property that for all 
$a \in \mathbb{R}$, the set
$$\{x \in C \mid f(x) \leq a\}$$
is bounded. One then starts from an arbitrary point $x_0 \in C$ and afterwards iteratively 
builds a sequence $(x_n)$ by the implicit (though uniquely determining) recurrence relation
$$f(x_{n+1}) = \min_{y\in C} (f(y) + \|x_n - y\|^2).$$
Martinet's Th\'eor\`eme 3 then asserts that any weak cluster point of this sequence is a 
solution to the given minimization problem.

In 1976, Rockafellar \cite{Roc76} took up the more general problem of finding a zero of a 
maximally monotone multi-valued operator $A: H \to 2^H$, i.e. a point $x$ such that $0 \in A(x)$ 
(single-valued monotone operators had already been considered by Martinet). This contains 
the previous case since the subdifferential $\partial f$ of a function $f$ having the properties 
considered above is a maximally monotone operator whose zeros coincide with the minimizers of $f$. 
The method used in this case in order to approach the desired solution was called the 
``proximal point algorithm'' and  generates starting from a point $x_0 \in C$ a sequence using another implicit recurrence, namely
$$x_n \in (id_H + \gamma_nA)(x_{n+1}),$$
where $(\gamma_n)$ is a sequence of positive real numbers. When $A = \partial f$, the relation 
reduces to the previous one if $(\gamma_n)$ is the sequence constantly equal to $1/2$. 
Theorem 1 of \cite{Roc76} shows that if
$$\inf_{n \in \mathbb{N}} \gamma_n > 0,$$
then $(x_n)$ weakly converges to a zero of $A$. Strong convergence is proved under some 
additional uniformity assumptions (such as $A^{-1}$ being Lipschitz continuous at $0$), but 
it does not generally hold, as G\"uler \cite[Corollary 5.1]{Gul90} later put forward a counterexample 
in this sense. Two years after Rockafellar's paper, Br\'ezis and Lions \cite{BreLio78} studied 
more general conditions one could impose on $(\gamma_n)$ that still yield weak convergence 
of the sequence $(x_n)$, there regarded as the ``infinite product'' of the resolvent operators
$$J_{\gamma_nA}:=(id_H + \gamma_nA)^{-1}.$$
Those conditions continue to be the state of the art -- e.g. for an arbitrary maximally monotone 
operator one may assume (\cite[Proposition 8]{BreLio78})
$$\sum_{n=0}^{\infty} \gamma_n^2 = \infty.$$

The proximal point algorithm has grown to become a versatile tool of convex optimization, 
being used, in addition to the applications already expounded upon, to solve a plethora of 
problems such as variational inequalities, minimax or equilibrium problems (some of these 
may already be found in the papers cited above). The book of Bauschke and Combettes \cite{BauCom10} 
may serve as an introduction to the field in the context of Hilbert spaces.

Outside Hilbert spaces, a natural generalization of the resolvent operator was given in the 
1990s by Jost \cite{Jos95} and Mayer \cite{May98} in the context of complete CAT(0) spaces, 
which can be regarded as the proper nonlinear analogue of Hilbert spaces. Using this definition 
and an appropriate notion of weak convergence introduced by Lim \cite{Lim76}, called 
$\Delta$-convergence, Ba\v{c}\'ak \cite{Bac13} extended in 2013 Th\'eor\`eme 9 of Br\'ezis and 
Lions \cite{BreLio78} in this context. More precisely, he proved the $\Delta$-convergence of 
the sequence generated by the proximal point algorithm when $f$ is a proper, convex and lower 
semicontinuous function that attains its minimum.

A separate strand of development came from fixed point theory. In the 1960s, Browder \cite{Bro67} 
and Halpern \cite{Hal67} studied the existence and computation of fixed points of nonexpansive 
mappings $T: C \to C$ (where $C$ is a closed convex bounded subset of a Hilbert space). They 
considered the notion of a resolvent of order $\gamma$ of $T$ -- that is, a mapping satisfying, 
for all $x \in C$,
$$R_{\gamma}x = \frac1{1+\gamma}x + \frac{\gamma}{1+\gamma}TR_{\gamma}x.$$
In the above, we reparametrized their construction in order to obtain a better fit with the 
objects considered here. Their main result states that by letting $\gamma \to \infty$, $R_{\gamma}x$ 
tends to the fixed point of $T$ which is the closest to $x$. Halpern's particularly simple 
argument was later generalized to the Hilbert ball in \cite{GoeRei84} and to complete CAT(0) 
spaces in \cite{Kir03}. A proof in the latter setting that starts from minimal boundedness 
assumptions may be found in a 2014 paper of Ba\v{c}\'ak and Reich \cite{BacRei14}. Note that 
\cite{BacRei14} also contains a variant of the proximal point algorithm which constructs, by 
iterating the resolvents of $T$, a sequence that $\Delta$-converges to a fixed point of $T$.

As one may notice, every iterative sequence that was considered above under the name of 
``proximal point algorithm'' follows a pattern: we have a mathematical object that we seek 
to optimize in some way, we construct associated ``resolvent'' operators, we take an initial 
arbitrary point $x$ and finally we iterate those operators starting from $x$. One may ask 
whether there are some very general hypotheses which yield the convergence of the resulting 
sequence without explicitly considering the particular details of the optimization problem 
at hand. Our first main goal is to answer this question in the affirmative in the framework 
of CAT(0) spaces (and therefore also for Hilbert spaces) by deriving some conditions related 
to firm nonexpansivity that are satisfied by all the above types of families of resolvents. 
We give these conditions in weaker and stronger forms, and show that while the strongest one 
is generally satisfied, the weakest one suffices to obtain appropriate convergence results. 
We should mention here that it was known for a long time that individual resolvents are in 
particular firmly nonexpansive, and some abstract results in the same spirit were previously 
obtained by Ariza-Ruiz, the first author and L\'opez-Acedo in \cite{AriLeuLop14}. The present 
paper may be regarded as a natural continuation of the study initiated there (see also \cite{AriLopNic15}).

Section~\ref{sec:prelim} introduces general notions and properties regarding geodesic metric 
spaces and mappings that are used in the sequel. Section~\ref{sec:gen-PPA} starts with some 
very general hypotheses for a sequence generated by a family of firmly nonexpansive mappings 
that yield its weak or $\Delta$-convergence. In the process, we derive some lemmas that characterize 
various asymptotic aspects of the proximal point algorithm. Then we define two conditions 
that one may impose on a family $(T_n)$ with respect to a sequence $(\gamma_n)$. These conditions 
generalize the property of a mapping being firmly nonexpansive to a relation between two possibly 
different mappings which is then applied to each possible pair from the countable family. We claim 
that these definitions capture the residual property used in convergence proofs that corresponds to 
the way a family of resolvents $(J_{\gamma_n})$ behaves with respect to the sequence of step-sizes 
$(\gamma_n)$. We consider then ``jointly firmly nonexpansive families'' and a somewhat weaker notion, 
``jointly $(P_2)$ families'' from which the general conditions can be obtained. In particular, 
the families of mappings involved in the problems discussed before (i.e. minimization of convex 
functions, finding fixed points of nonexpansive mappings and finding zeros of maximally monotone operators) 
satisfy these conditions.

The second main goal of this paper is to find quantitative variants of some convergence results 
for the proximal point algorithm. This falls within the purview of proof mining, an applied subfield 
of logic. Proof mining primarily concerns itself with the application of tools from proof theory 
to obtain computational content for theorems in ordinary mathematics with proofs that are not 
necessarily fully constructive. The project was first suggested in the 1950s by Kreisel under 
the name of ``unwinding of proofs'', but it gained considerable momentum after its extensive development 
in the 1990s and 2000s by Kohlenbach and his collaborators, culminating with the publication of 
general logical metatheorems, developed by Kohlenbach \cite{Koh05} and by Gerhardy and Kohlenbach 
\cite{GerKoh08}, that tell us when a proof of a theorem proven in classical logic may be analyzed 
in order to obtain (``extract'') its hidden quantitative information. A comprehensive reference for 
the major developments of the field up to 2008 is the monograph of Kohlenbach \cite{Koh08}, while 
surveys of recent results are \cite{Koh17,Koh18}. So far, proof mining has been successfully applied to 
obtain quantitative versions of celebrated results in various areas of mathematics such as approximation 
theory, nonlinear analysis,  metric fixed point theory, ergodic theory, or topological dynamics. 
Recently, its methods have begun to be applied to 
convex optimization, for more details see \cite{BacKohXX, KohLeuNic18, KohLopNic17, KohLopNicXX, Kou17, LeuSip18,LeuSip18b}.

Let us discuss the sort of quantitative results that we obtain.  If $(x_n)$ is a sequence in 
a metric space $X$ and $x\in X$, then $\lim_{n\to\infty}x_n=x$ 
if and only if
$$\forall k \in {\mathbb N} \,\exists N \in {\mathbb N} \,\forall n \geq N\, \left(d(x_n,x) 
\leq \frac1{k+1}\right).$$
A quantitative version of the above would be a {\it rate of convergence} for the sequence: a 
formula showing how to compute the $N$ in terms of the $k$. However, very simple real-valued 
sequences have been shown by methods of mathematical logic to lack a computable rate of convergence. 
We recall, though, from the discussion above, that strong convergence of the proximal point algorithm 
could only be obtained under some extra uniformity assumptions. Fortunately, some of these conditions 
yield the uniqueness of the needed optimizing point (minimizer, fixed point or zero). This uniqueness 
was shown by the work of Kohlenbach \cite{Koh90}, Kohlenbach and Oliva \cite[Section 4.1]{KohOli03} and Briseid \cite{Bri09}
to guarantee the extraction of a rate of convergence, relative to some other piece of quantitative information.
In Section~\ref{sec:uniform}, therefore, we define a general notion of uniformity applicable to our families of mappings
(extending the similar notion given in \cite{BarBauMofWan16} in the context of Hilbert spaces). One then shows that
the concrete algorithms  have corresponding ``uniform'' cases that are subsumed into this definition, e.g. finding
zeros of uniformly monotone mappings or minimizing uniformly convex functions.

Section~\ref{sec:quantitative} then shows that this definition suffices: a quantitative variant 
of the asymptotic lemmas from Section~\ref{sec:gen-PPA} fits in as the relevant piece of information 
that is then used, as per the above discussion, to obtain a highly uniform rate of convergence for 
this special case of the proximal point algorithm. As a byproduct, we obtain an alternate proof 
for the classical qualitative results of strong convergence.

Proximal methods are not limited to the classical problems of convex optimization. Therefore, a question that arises is to what extent a natural and abstract approach of the type provided here could be employed to capture other such variants, which are used, for example, in global (non-convex) optimization \cite{KapTic98,IusPenSva03}, where the necessity of the existence of iterates requires one to assume weak forms of monotonicity. Another direction consists in considering multi-valued resolvent-type operators instead of single-valued ones. In this case the algorithm becomes nondeterministic (i.e. given a current iterate, the following one is not uniquely determined). Such a development would allow one to cover e.g. vector-valued optimization problems \cite{BonIusSva05,CenMorYao10}.

\section{Preliminaries}\label{sec:prelim}

We start by briefly recalling some notions and properties about geodesic spaces needed in the sequel. 
More details on geodesic spaces can be found, for example,  in \cite{Pap05,BriHae99,Bac14}. 
Let $(X,d)$ be a metric space. A {\em geodesic} in $X$ is a mapping $\gamma : [a,b] \to X$ (where $a,b \in \mathbb{R}$)
 such that for all $s,t \in [a,b]$,
$$d(\gamma(s),\gamma(t))=|s-t|.$$ 
We say that $X$ is a {\em geodesic space} if for all $x,y \in X$, there is a geodesic 
$\gamma : [a,b] \to X$ satisfying $\gamma(a)=x$ and $\gamma(b)=y$.

A geodesic space $(X,d)$ is called a {\em CAT(0) space} if for all $z \in X$, all geodesics 
$\gamma : [a,b] \to X$ and all $t \in [0,1]$ we have that
\beq
d^2(z,\gamma((1-t)a+tb)) \leq (1-t)d^2(z,\gamma(a)) + td^2(z,\gamma(b)) - t(1-t)d^2(\gamma(a),\gamma(b)). \label{def-CAT0}
\eeq
It easily follows that every CAT(0) space is {\it uniquely geodesic} -- that is,  for any $x,y$ in 
such a space $X$ there is a {\em unique} geodesic $\gamma : [0,d(x,y)] \to X$ such that 
$\gamma(0)=x$ and $\gamma(d(x,y))=y$ -- and in this framework we shall denote, for any $t \in [0,1]$, 
the point $\gamma(td(x,y))$ by $(1-t)x + ty$. Note that every CAT(0) space $X$ is {\it Busemann convex} -- i.e., for any $x,y,u,v \in X$ and $t \in [0,1]$, 
\begin{equation}\label{mcv}
d((1-t)x + ty,(1-t)u + tv) \leq (1-t)d(x,u) + td(y,v).
\end{equation}

We will also make use of the {\it quasi-linearization function} $\langle\cdot,\cdot\rangle : X^2 \times X^2 \to \mathbb{R}$ introduced by Berg and Nikolaev in \cite{BerNik08}, which is defined, for any $x,y,u,v \in X$, by the following (where an ordered pair of points $(w,w') \in X^2$ is denoted by $\vv{ww'}$):
\begin{equation}\label{def-quasilin-fct}
\langle \vv{xy}, \vv{uv} \rangle := \frac12(d^2(x,v) + d^2(y,u) - d^2(x,u) -d^2(y,v)).
\end{equation}
\begin{proposition}[{\cite[Proposition 14]{BerNik08}}]\label{bnchar}
In any metric space $(X,d)$, the mapping $\langle\cdot,\cdot\rangle$ is the unique one that satisfies, for any $x,y,u,v,w \in X$, the following properties:
\begin{enumerate}[(i)]
\item $\langle\vv{xy},\vv{xy}\rangle = d^2(x,y)$;
\item $\langle\vv{xy},\vv{uv}\rangle = \langle\vv{uv},\vv{xy}\rangle$;
\item $\langle\vv{yx},\vv{uv}\rangle = -\langle\vv{xy},\vv{uv}\rangle$;
\item $\langle\vv{xy},\vv{uv}\rangle + \langle\vv{xy},\vv{vw}\rangle = \langle\vv{xy},\vv{uw}\rangle$.
\end{enumerate}
\end{proposition}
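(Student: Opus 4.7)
The proposition has two parts: existence (the formula in \eqref{def-quasilin-fct} satisfies (i)--(iv)) and uniqueness. The existence half is routine: (i) follows because $d^2(x,x) + d^2(y,y) = 0$ while $d^2(x,y) + d^2(y,x) = 2d^2(x,y)$; (ii) and (iii) are immediate from the symmetric appearance of the four endpoints in \eqref{def-quasilin-fct}; and (iv) is obtained by adding the defining expressions for $\langle\vv{xy},\vv{uv}\rangle$ and $\langle\vv{xy},\vv{vw}\rangle$ and observing that the terms $d^2(x,v)$ and $d^2(y,v)$ cancel. I anticipate no difficulty here.

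For uniqueness, let $\phi$ be any function satisfying (i)--(iv) and set $\psi := \phi - \langle\cdot,\cdot\rangle$. Then $\psi$ inherits (ii), (iii), (iv) and, by (i), $\psi(\vv{pq},\vv{pq}) = 0$ for all $p,q \in X$. Applying (iv) with intermediate point $w = x$ and then (iii) yields
\[
\psi(\vv{xy},\vv{uv}) \;=\; \psi(\vv{xy},\vv{xv}) - \psi(\vv{xy},\vv{xu}),
\]
so the task reduces to showing that $\psi(\vv{xy},\vv{xu}) = 0$ for every $x,y,u \in X$.

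The plan for this reduction, which I expect to be the only delicate step, is to first extract the basic identity
\[
(\star)\qquad \psi(\vv{xy},\vv{xu}) \;=\; \psi(\vv{xy},\vv{yu})
\]
from (iv) with intermediate point $w = y$ and the vanishing $\psi(\vv{xy},\vv{xy}) = 0$. I would then chain (ii), (iii) (together with its consequence, obtained by combining with (ii), that flipping the second pair also negates $\psi$), and two further instances of $(\star)$ under the relabellings $(x,y,u) \mapsto (y,u,x)$ and $(x,y,u) \mapsto (x,u,y)$, in order to form the loop
\[
\psi(\vv{xy},\vv{xu}) = \psi(\vv{xy},\vv{yu}) = \psi(\vv{yu},\vv{xy}) = \psi(\vv{yu},\vv{xu}) = \psi(\vv{xu},\vv{yu}) = -\psi(\vv{xu},\vv{uy}) = -\psi(\vv{xu},\vv{xy}) = -\psi(\vv{xy},\vv{xu}),
\]
which forces $\psi(\vv{xy},\vv{xu}) = 0$.

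The main obstacle is purely combinatorial: one has to pick the relabellings of $(x,y,u)$ so that the chain closes with an odd number of sign reversals, since unlucky choices collapse to a tautology. Once the right loop is identified, each individual step is a one-line invocation of (ii), (iii), or $(\star)$, and uniqueness follows immediately from the reduction above.
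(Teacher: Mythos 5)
Your proof is correct. Note first that the paper itself gives no proof of this proposition: it is imported verbatim as \cite[Proposition 14]{BerNik08}, so there is nothing in the source to compare against line by line; what you have produced is a self-contained argument for a result the authors only cite. Your existence half is the routine verification one would expect, and your uniqueness argument is sound: setting $\psi:=\phi-\langle\cdot,\cdot\rangle$, properties (ii)--(iv) pass to $\psi$ and (i) gives $\psi(\vv{pq},\vv{pq})=0$; the decomposition $\psi(\vv{xy},\vv{uv})=\psi(\vv{xy},\vv{xv})-\psi(\vv{xy},\vv{xu})$ via (iv) (splitting the second pair through $x$) and the flip-negation rule for the second slot correctly reduce everything to $\psi(\vv{xy},\vv{xu})=0$, and your closed chain does produce an odd number of sign changes, forcing $2\psi(\vv{xy},\vv{xu})=0$. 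I checked each link: step $(\star)$ follows from (iv) with the triple $(x,y,u)$ in the second argument together with $\psi(\vv{xy},\vv{xy})=0$; the two relabelled instances of $(\star)$ and the two uses of (ii) are all legitimate (one of them needs the second-slot flip applied twice, which is harmless). Two cosmetic points only: the phrase ``(iv) with intermediate point $w=x$'' is notationally off, since in (iv) the intermediate point of the path $u\to v\to w$ is $v$, so you are really instantiating (iv) at the triple $(u,x,v)$; and the passage from $\psi(\vv{xy},\vv{ux})$ to $-\psi(\vv{xy},\vv{xu})$ uses the derived second-slot antisymmetry (a combination of (ii) and (iii)), not (iii) alone --- you do acknowledge this later, but it is worth stating before its first use.
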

In particular, if $X$ is a real Hilbert space with inner product $\langle\cdot,\cdot\rangle$, then
\begin{equation}\label{eq-quasi-inner}
\langle\vv{xy},\vv{uv}\rangle = \langle x-y,u-v \rangle = \langle y-x,v-u \rangle,
\end{equation} 
for all $x,y,u,v \in X$. This justifies the notation.

The main result of \cite{BerNik08}, Theorem 1, gives a characterization of CAT$(0)$ spaces in terms of the ``Cauchy-Schwarz'' inequality for
 $\langle\cdot,\cdot\rangle$. More precisely, a geodesic space $(X,d)$ is CAT$(0)$ if and only if  
\begin{equation}\label{CauchySchwartz}
\langle\vv{xy},\vv{uv}\rangle \leq d(x,y)d(u,v),
\end{equation}
for all $x,y,u,v \in X$. Furthermore, by \cite[Theorem 6]{BerNik08}, a related condition for a geodesic space $(X,d)$ to be CAT$(0)$ is the following inequality
\begin{equation}\label{bn}
d^2(x,v) + d^2(y,u) \leq d^2(x,u) + d^2(y,v) + d^2(x,y) + d^2(u,v),
\end{equation}
which is to be satisfied for all $x,y,u,v \in X$.

\mbox{} 

For the rest of the section, $(X,d)$ is a geodesic space, unless stated otherwise. If $T:X\to X$ is a mapping, we denote by $Fix(T)$ the set of its fixed points.

The following generalization of firmly nonexpansive mappings to geodesic spaces was introduced in \cite{AriLeuLop14}.

\begin{definition}\label{def-fn}
A mapping $T : X \to X$ is called {\em firmly nonexpansive} if for any $x,y \in X$ and any $t \in [0,1]$ we have that
$$d(Tx,Ty) \leq d((1-t)x + tTx,(1-t)y+tTy).$$
\end{definition}

As mentioned in \cite{AriLopNic15} (see also \cite{KohLopNicXX}), if $X$ is a CAT$(0)$ space, every firmly nonexpansive mapping $T:X \to X$ satisfies the so-called {\em property $(P_2)$}. Namely, 
$$2d^2(Tx,Ty) \leq d^2(x,Ty) + d^2(y,Tx) - d^2(x,Tx) - d^2(y,Ty),$$
for all $x,y \in X$. In other words,
\begin{equation}\label{eq-quasi-P2}
d^2(Tx,Ty) \leq \langle\vv{TxTy},\vv{xy}\rangle,
\end{equation}
for all $x,y \in X$. If $X$ is a Hilbert space, property $(P_2)$ is sufficient for firm nonexpansivity 
as \eqref{eq-quasi-P2} and \eqref{eq-quasi-inner} yield $\|Tx - Ty\|^2 \le \langle Tx-Ty,x-y \rangle$, 
which is, in turn, equivalent to Definition \ref{def-fn} (see, e.g., \cite[Proposition 4.2]{BauCom10} 
for a proof). Moreover, from \eqref{eq-quasi-P2} and \eqref{CauchySchwartz} one immediately obtains the following result. 
\begin{lemma}\label{P2-im-nonexp}
If $X$ is a CAT(0) space and $T:X\to X$ satisfies property $(P_2)$, then $T$ is nonexpansive.
\end{lemma}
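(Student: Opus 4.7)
The plan is very short: just chain the two inequalities that are already at hand.

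First I would apply property $(P_2)$ in its quasi-linearized form \eqref{eq-quasi-P2}, which gives, for arbitrary $x,y \in X$, the bound
\[
d^2(Tx,Ty) \leq \langle \vv{TxTy}, \vv{xy} \rangle.
\]
Next I would bound the right-hand side by means of the Cauchy--Schwarz-type inequality \eqref{CauchySchwartz}, valid since $X$ is CAT(0), applied to the four points $Tx, Ty, x, y$:
\[
\langle \vv{TxTy}, \vv{xy} \rangle \leq d(Tx,Ty)\, d(x,y).
\]
Combining these yields $d^2(Tx,Ty) \leq d(Tx,Ty)\, d(x,y)$.

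Finally I would conclude by a trivial case distinction on $d(Tx,Ty)$: if $d(Tx,Ty) = 0$ the nonexpansivity inequality $d(Tx,Ty) \leq d(x,y)$ is immediate; otherwise one divides both sides of the combined estimate by the positive quantity $d(Tx,Ty)$ and obtains the same inequality. Since $x,y$ were arbitrary, $T$ is nonexpansive. There is no real obstacle here — the lemma is essentially a two-line consequence of the fact that Proposition \ref{bnchar}(i) and the CAT(0) characterization of Berg--Nikolaev have already been collected; the only point worth flagging is the trivial division by $d(Tx,Ty)$, which is why the case split is necessary.
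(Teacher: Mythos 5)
Your proposal is correct and is exactly the argument the paper intends: the lemma is stated as an immediate consequence of \eqref{eq-quasi-P2} and \eqref{CauchySchwartz}, which is precisely the chain you wrote down. The explicit case split on $d(Tx,Ty)=0$ is a fine (if routine) way to make the final division rigorous.
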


Let $(x_n)$ be a bounded sequence in $X$ and $F\subseteq X$ be nonempty. For any $y\in X$, define
\[r(y, (x_n)):= \limsup_{n \to \infty} d(y,x_n), \quad r(F, (x_n)) := \inf \{ r(y, (x_n)) \mid y \in F\}.\]
Furthermore, $A(F,(x_n)) := \{y \in F \mid r(y, (x_n)) = r(F,(x_n))\}$ and elements of $A(F,(x_n))$
are called {\em asymptotic centers} of  $(x_n)$  with respect to $F$.
We shall denote $A(X,(x_n))$ by $A((x_n))$ and call its elements asymptotic centers of $(x_n)$.

The next results will be used in the subsequent sections. 

\begin{lemma}[{\cite[Lemma 3.2]{Leu10}}]\label{UCW-useful-unique-as-center}
Let $(x_n)$ be a bounded sequence in $X$ with $A((x_n))=\{c\}$ and  $(\alpha_n),(\beta_n)$ be 
real sequences such that $\alpha_n\geq 0$ for all $n\in\N$,
$\limsup_{n \to \infty} \alpha_n\leq 1$ and $\limsup_{n \to \infty} \beta_n\leq 0$.\\
 Assume that $y\in X$  is such that there exist $p,N\in\N$ satisfying, for all $n\geq N$, 
\[d(y,x_{n+p})\leq \alpha_nd(c,x_n)+\beta_n.\]
Then $y=c$.
\end{lemma}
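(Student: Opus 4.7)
The plan is to take $\limsup_{n\to\infty}$ on both sides of the hypothesis inequality and exploit the defining minimality property of the (unique) asymptotic center $c$.

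First I would deal with the left-hand side. Since shifting the index by a fixed $p$ does not alter a $\limsup$, I would observe that
\[
\limsup_{n\to\infty}d(y,x_{n+p})=\limsup_{n\to\infty}d(y,x_n)=r(y,(x_n)).
\]
For the right-hand side, the issue is that products of $\limsup$s are not always well-behaved, so I would not quote a product rule directly. Instead, for any $\varepsilon>0$, the assumption $\limsup\alpha_n\leq 1$ together with $\alpha_n\geq 0$ gives $\alpha_n\leq 1+\varepsilon$ for all sufficiently large $n$. Using that $(x_n)$ is bounded (so $r(c,(x_n))<\infty$), this yields
\[
\limsup_{n\to\infty}\alpha_n d(c,x_n)\leq (1+\varepsilon)\,r(c,(x_n)),
\]
and letting $\varepsilon\to 0$ we obtain $\limsup_{n\to\infty}\alpha_nd(c,x_n)\leq r(c,(x_n))$. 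Adding the contribution $\limsup_{n\to\infty}\beta_n\leq 0$ gives
\[
\limsup_{n\to\infty}\bigl(\alpha_nd(c,x_n)+\beta_n\bigr)\leq r(c,(x_n)).
\]

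Combining the two $\limsup$ estimates with the hypothesis yields $r(y,(x_n))\leq r(c,(x_n))$. Since $c\in A((x_n))$, one has $r(c,(x_n))=r(X,(x_n))=\inf_{z\in X}r(z,(x_n))$, so in fact $r(y,(x_n))=r(X,(x_n))$, i.e.\ $y\in A((x_n))$. The assumption $A((x_n))=\{c\}$ then forces $y=c$.

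The argument is essentially a two-line manipulation; the only delicate point is the right-hand $\limsup$, where one must avoid the trap of applying a naive product rule and instead pass through the $(1+\varepsilon)$-upper bound for $\alpha_n$. I do not expect any serious obstacle beyond this minor care.
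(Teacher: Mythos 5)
Your proof is correct and is essentially the standard argument: the paper does not prove this lemma itself but cites it from \cite{Leu10}, and the proof there proceeds exactly as you do, by estimating $\limsup_n d(y,x_{n+p})=r(y,(x_n))\leq r(c,(x_n))=r(X,(x_n))$ via an $\varepsilon$-upper bound on $\alpha_n$ and concluding $y\in A((x_n))=\{c\}$. No gaps.
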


\begin{proposition}[{\cite[Proposition 7]{DhoKirSim06}}]\label{CAT0-unique-ac}
Every bounded sequence $(x_n)$ in  a complete CAT(0) space $X$ has a unique asymptotic center 
with respect to any nonempty closed convex subset of $X$.
\end{proposition}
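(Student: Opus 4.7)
The plan is to handle existence and uniqueness simultaneously through a ``uniform convexity'' estimate on the squared asymptotic radius functional, using the defining CAT(0) inequality applied at midpoints. Fix a nonempty closed convex $C \subseteq X$ and define $f : C \to \mathbb{R}$ by $f(y) = r(y,(x_n)) = \limsup_{n\to\infty} d(y,x_n)$. Since $(x_n)$ is bounded, $f$ is finite, $1$-Lipschitz (hence continuous), and bounded below, so $r := r(C,(x_n)) = \inf_{y \in C} f(y)$ is a well-defined nonnegative real.

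The key inequality comes from \eqref{def-CAT0} applied with $z = x_n$ and $t = 1/2$ to the geodesic from $y$ to $y'$ in $C$. Writing $m = \tfrac12 y + \tfrac12 y'$ (which lies in $C$ by convexity), one gets
\begin{equation*}
d^2(x_n, m) \leq \tfrac12 d^2(x_n,y) + \tfrac12 d^2(x_n,y') - \tfrac14 d^2(y,y').
\end{equation*}
Taking $\limsup_{n\to\infty}$ on both sides and using subadditivity of $\limsup$ (and that $\limsup d^2(x_n,\cdot) = f(\cdot)^2$), this gives the master estimate
\begin{equation*}
f(m)^2 \leq \tfrac12 f(y)^2 + \tfrac12 f(y')^2 - \tfrac14 d^2(y,y').
\end{equation*}

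For existence, pick a minimizing sequence $(y_k) \subseteq C$ with $f(y_k) \to r$. Applying the master estimate to $y_k, y_l$ and noting that $f(m_{k,l}) \geq r$ by definition of $r$, one obtains $\tfrac14 d^2(y_k,y_l) \leq \tfrac12 f(y_k)^2 + \tfrac12 f(y_l)^2 - r^2 \to 0$. So $(y_k)$ is Cauchy; by completeness of $X$ and closedness of $C$ it converges to some $y^* \in C$, and continuity of $f$ gives $f(y^*) = r$, i.e.\ $y^* \in A(C,(x_n))$. For uniqueness, suppose $y_1, y_2 \in A(C,(x_n))$. Their midpoint $m \in C$ satisfies $f(m) \geq r$, so the master estimate yields $r^2 \leq r^2 - \tfrac14 d^2(y_1,y_2)$, forcing $y_1 = y_2$.

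The only delicate point is the $\limsup$ step: one needs that $\limsup_n \bigl[\tfrac12 d^2(x_n,y) + \tfrac12 d^2(x_n,y')\bigr] \leq \tfrac12 f(y)^2 + \tfrac12 f(y')^2$ and that the subtracted constant $\tfrac14 d^2(y,y')$ comes through unchanged, both of which are immediate from basic properties of $\limsup$. Everything else is a direct application of \eqref{def-CAT0} plus completeness. I expect no real obstacles; the CAT(0) inequality is tailor-made for exactly this kind of ``parallelogram law'' argument, which is the nonlinear analogue of the standard Hilbert space proof that a closed convex set has a unique point of minimum norm.
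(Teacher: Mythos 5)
Your argument is correct and complete: the CAT(0) midpoint inequality gives the uniform-convexity estimate $f(m)^2 \leq \tfrac12 f(y)^2 + \tfrac12 f(y')^2 - \tfrac14 d^2(y,y')$ for the (continuous, $1$-Lipschitz) asymptotic radius functional, from which both the Cauchyness of minimizing sequences (hence existence, via completeness and closedness of the subset) and uniqueness follow at once. The paper does not prove this proposition itself but imports it from Dhompongsa--Kirk--Sims, and your proof is precisely the standard argument behind that cited result, so there is nothing to flag.
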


In order to state our main results, we need to introduce the notion of $\Delta$-convergence 
which was defined by Lim \cite{Lim76} in metric spaces. We refer to \cite{Kuc80,Jos94,EspFer09} 
for equivalent notions in the setting of complete CAT(0) spaces, where $\Delta$-convergence can 
be seen as a generalization of the weak convergence in Banach spaces (see \cite{KirPan08}). 
In fact, in Hilbert spaces, $\Delta$-convergence coincides with weak convergence 
(see \cite[Exercise 3.1]{Bac14}). 

\begin{definition}
A bounded sequence $(x_n)$ {\em $\Delta$-converges} to a point $x \in X$ if for any subsequence  
$(u_n)$ of  $(x_n)$ we have that $A((u_n))=\{x\}$.
\end{definition}

The notion of Fej\'{e}r monotonicity will also play an important role in this work. Let 
$(x_n)$ be a sequence in $X$ and $F\subseteq X$ be nonempty.

\begin{definition}
We say that $(x_n)$ is {\it Fej\'er monotone} with respect to $F$ if for all $p \in F$ and 
all $n \in \mathbb{N}$, we have that
$$d(x_{n+1},p)\leq d(x_n,p).$$
\end{definition}
It is obvious that if $(x_n)$ is Fej\'er monotone with respect to $F$, then $(d(x_n,p))$ 
converges for every $p\in F$ and, furthermore,  $(x_n)$ is bounded.

Finally, we recall the following  well-known result (see, for example, \cite[Proposition 3.3.(iii)]{BacSeaSim12}), 
which turns out to be very useful in obtaining $\Delta$-convergence results.

\begin{proposition} \label{as-cen-unic-Delta}
Let $X$ be a complete CAT(0) space and $(x_n)$ be Fej\'er monotone with respect to $F$. Assume that 
the asymptotic center of every subsequence of $(x_n)$ is in $F$.
Then $(x_n)$ $\Delta$-converges to some $x\in F$.
\end{proposition}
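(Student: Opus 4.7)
The plan is to prove that the asymptotic center of every subsequence of $(x_n)$ is the \emph{same} singleton $\{x\} \subseteq F$, so that $\Delta$-convergence follows directly from its definition. First, since $(x_n)$ is Fej\'er monotone with respect to the nonempty set $F$, it is bounded, and hence so is every subsequence. Proposition~\ref{CAT0-unique-ac} then guarantees that $A((x_n))$ and $A((u_n))$ are singletons for every subsequence $(u_n)$ of $(x_n)$. Write $A((x_n)) = \{x\}$ and, for an arbitrary subsequence $(u_n)$, $A((u_n)) = \{y\}$. By hypothesis, both $x$ and $y$ lie in $F$.

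Next, I would exploit the fact that Fej\'er monotonicity with respect to $F$ implies that $(d(x_n,p))$ converges in $\mathbb{R}$ for every $p \in F$. Applied to $p = x$ and $p = y$, this yields that $\limsup_{n\to\infty} d(x,x_n)$ and $\limsup_{n\to\infty} d(y,x_n)$ are in fact genuine limits, and these limits are preserved when passing to any subsequence. In particular,
\[
r(x,(x_n)) = r(x,(u_n)) \quad \text{and} \quad r(y,(x_n)) = r(y,(u_n)).
\]

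Now I would combine the two minimality properties. Since $\{x\} = A((x_n))$, we have $r(x,(x_n)) \leq r(y,(x_n))$, and since $\{y\} = A((u_n))$, we have $r(y,(u_n)) \leq r(x,(u_n))$. Using the two identities above, both inequalities become equalities, so $x$ also attains the infimum $r(X,(u_n))$. The uniqueness of the asymptotic center of $(u_n)$ forces $x = y$. As the subsequence $(u_n)$ was arbitrary, this shows that every subsequence of $(x_n)$ has asymptotic center $\{x\}$, which is precisely the definition of $\Delta$-convergence to $x \in F$.

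There is no serious obstacle in this argument; the only delicate point is the exchange $\limsup_n d(p,x_n) = \lim_n d(p,x_n)$ for $p \in F$, which is exactly where Fej\'er monotonicity is used, together with the assumption $y \in F$ — without the hypothesis that asymptotic centers of subsequences lie in $F$, one could not apply Fej\'er monotonicity at the point $y$ to equate $r(y,(u_n))$ with $r(y,(x_n))$.
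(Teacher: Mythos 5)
Your argument is correct and complete: the key step --- using Fej\'er monotonicity to turn $\limsup_n d(p,x_n)$ into a genuine limit for $p\in F$, so that $r(p,(u_n))=r(p,(x_n))$ for the two candidate centers, and then playing the two minimality properties against each other --- is exactly the standard proof of this fact. The paper itself offers no proof, citing \cite[Proposition 3.3.(iii)]{BacSeaSim12}, and your reasoning reproduces the argument given there.
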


\section{An abstract Proximal Point Algorithm}\label{sec:gen-PPA}

We now begin the process of modularizing the proof(s) that guarantee the weak convergence of 
common instances of the proximal point algorithm. Theorem~\ref{gppa} is the first stage in 
this sense and provides some highly general conditions under which the iteration constructed 
by applying countably many mappings converges weakly. In proving it, we shall also show 
some fundamental properties of that iterative sequence, such as Fej\'er monotonicity and a 
form of asymptotic regularity.

\mbox{} 

In the following, $X$ is  a complete CAT(0) space and $(T_n)_{n \in \mathbb{N}}$ is a family of self-mappings of $X$ 
satisfying property $(P_2)$ and having common fixed points. Set 
$$F:=\bigcap_{n \in \mathbb{N}} Fix(T_n)\neq\emptyset.$$

For $x\in X$, we define the following iteration starting with $x$:
\beq
x_0:=x, \quad x_{n+1}:=T_nx_n \text{~for all~}n\in\mathbb{N}. \label{def-main-iteration}
\eeq
Let  $(\gamma_n)$ be a sequence of positive real numbers such that $\sum_{n=0}^\infty \gamma_n^2 = \infty$. 

The following conditions will also be considered in the sequel:
\begin{enumerate}
\item[$(C1)$] for all $n,m \in \mathbb{N}$ and  $w\in X$, $d(T_nw,T_mw) \leq \frac{|\gamma_n-\gamma_m|}{\gamma_n}d(w,T_nw)$; 
\item[$(C2)$]  the sequence $\left(\frac{d(x_n,x_{n+1})}{\gamma_n}\right)_{n \in \mathbb{N}}$ is nonincreasing.
\end{enumerate}

We include below a series of preliminary results. 

\begin{lemma}\label{fixtn}
Suppose that $(C1)$ holds. Then $Fix(T_n) = F$ for every $n \in \N$.
\end{lemma}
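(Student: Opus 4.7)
The plan is essentially a one-line observation: the reverse inclusion $\mathrm{Fix}(T_n) \subseteq F$ is forced by $(C1)$ because the right-hand side of the inequality involves $d(w, T_n w)$, which vanishes whenever $w$ is a fixed point of $T_n$.

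More precisely, I would first note that the inclusion $F \subseteq \mathrm{Fix}(T_n)$ holds for every $n \in \mathbb{N}$ directly from the definition $F = \bigcap_{k \in \mathbb{N}} \mathrm{Fix}(T_k)$, so there is nothing to prove in that direction. For the nontrivial inclusion, I would fix $n \in \mathbb{N}$, take an arbitrary $w \in \mathrm{Fix}(T_n)$, and show $w \in \mathrm{Fix}(T_m)$ for every $m \in \mathbb{N}$. Applying $(C1)$ to the pair $(n,m)$ at the point $w$ gives
\[
d(T_n w, T_m w) \leq \frac{|\gamma_n - \gamma_m|}{\gamma_n} \, d(w, T_n w).
\]
Since $w$ is a fixed point of $T_n$, the factor $d(w, T_n w)$ equals $0$, and hence $d(T_n w, T_m w) = 0$. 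This forces $T_m w = T_n w = w$, i.e. $w \in \mathrm{Fix}(T_m)$. Taking the intersection over all $m$ yields $w \in F$.

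There is no real obstacle here; the only things to check are that the factor $\gamma_n$ appearing in the denominator of $(C1)$ is positive (which is guaranteed by the assumption that $(\gamma_n)$ is a sequence of positive reals), and that $(C1)$ is applied with $w$ being a fixed point of $T_n$ rather than of $T_m$ — i.e.\ in the correct order so that the vanishing factor $d(w, T_n w)$ is the one appearing on the right-hand side. No facts beyond $(C1)$ and the definition of $F$ are needed.
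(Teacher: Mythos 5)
Your proposal is correct and matches the paper's (omitted-as-immediate) argument exactly: the trivial inclusion $F \subseteq Fix(T_n)$ plus the observation that for $w \in Fix(T_n)$ the right-hand side of $(C1)$ is $\frac{|\gamma_n-\gamma_m|}{\gamma_n}d(w,T_nw)=0$, forcing $T_mw=w$ for all $m$. Nothing to add.
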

\begin{proof}
It follows immediately.  
\details{Obviously, $F\subseteq Fix(T_n)$ for every $n$. 
If $p$ is a fixed point of a $T_n$ and $m \in \mathbb{N}$, then
$d(p,T_mp)=d(T_np,T_mp)\leq \frac{|\gamma_n-\gamma_m|}{\gamma_n}d(p,p) = 0$. Thus, $p$ is a fixed point of every $T_m$.}
\end{proof}

\begin{lemma}\label{2-lem-1}
For all $p\in F$ and all $n \in \mathbb{N}$, we have that
$$d^2(x_{n+1},p) \leq d^2(x_n,p) - d^2(x_n,x_{n+1}).$$
In particular, $(x_n)$ is Fej\'er monotone with respect to $F$.
\end{lemma}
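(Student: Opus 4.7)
The plan is to apply property $(P_2)$ of $T_n$ directly to the pair $(x_n, p)$, using that $p \in F \subseteq Fix(T_n)$ (so $T_n p = p$) and that $x_{n+1} = T_n x_n$ by definition of the iteration. In its quasi-linearization form \eqref{eq-quasi-P2}, this yields
$$d^2(x_{n+1},p) = d^2(T_n x_n, T_n p) \leq \langle \vv{T_n x_n\, T_n p}, \vv{x_n p}\rangle = \langle \vv{x_{n+1} p}, \vv{x_n p}\rangle.$$

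Next I would expand the right-hand side using the explicit definition \eqref{def-quasilin-fct} of $\langle\cdot,\cdot\rangle$:
$$\langle \vv{x_{n+1} p}, \vv{x_n p}\rangle = \tfrac{1}{2}\bigl(d^2(x_{n+1},p) + d^2(p,x_n) - d^2(x_{n+1}, x_n) - d^2(p,p)\bigr),$$
which simplifies to $\tfrac{1}{2}(d^2(x_{n+1},p) + d^2(x_n,p) - d^2(x_n, x_{n+1}))$ since $d(p,p)=0$. Combining with the previous inequality and cancelling $\tfrac{1}{2}d^2(x_{n+1},p)$ from both sides yields the desired estimate.

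For the ``in particular'' clause, the inequality immediately gives $d^2(x_{n+1},p) \leq d^2(x_n,p)$, hence $d(x_{n+1},p) \leq d(x_n,p)$, which is Fej\'er monotonicity with respect to $F$.

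There is essentially no obstacle here: the only nontrivial ingredient is identifying $(P_2)$ as the right tool and handling the bookkeeping of the quasi-linearization identity. Hypotheses $(C1)$ and $(C2)$ are not needed at this stage, nor is the assumption $\sum \gamma_n^2 = \infty$; only the fact that each $T_n$ satisfies $(P_2)$ and fixes $p$ is used.
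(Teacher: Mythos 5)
Your proof is correct and follows essentially the same route as the paper: both apply property $(P_2)$ of $T_n$ to the pair $(x_n,p)$ with $T_np=p$ and simplify; your use of the quasi-linearization form \eqref{eq-quasi-P2} is just a notational repackaging of the squared-distance inequality the paper manipulates directly. The observation that $(C1)$, $(C2)$ and the divergence of $\sum\gamma_n^2$ are not needed here is also accurate.
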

\begin{proof}
Let $p\in F$ and $n \in \mathbb{N}$. Since $T_n$ satisfies property $(P_2)$ and $p\in Fix(T_n)$, 
we have that $2d^2(T_nx_n,p) \leq d^2(x_n,p) + d^2(T_nx_n,p) - d^2(x_n,T_nx_n)$. It follows that  
$d^2(T_nx_n,p) \leq d^2(x_n,p) - d^2(x_n,T_nx_n)$, hence the conclusion.
\end{proof}

\begin{lemma}\label{2-lem-2}
Assume that $(C2)$ is satisfied. Then 

$$\limn d(x_n,x_{n+1})=0 \quad \text{and}\quad \lim_{n \to \infty} \frac{d(x_n,x_{n+1})}{\gamma_n} = 0.$$
\end{lemma}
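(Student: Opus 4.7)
The plan is to combine a telescoping argument from Lemma \ref{2-lem-1} with the monotonicity hypothesis $(C2)$ and the divergence $\sum_{n=0}^{\infty}\gamma_n^2=\infty$.

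First, I would fix any $p\in F$ (which is nonempty by assumption). Summing the inequality from Lemma \ref{2-lem-1} from $n=0$ to $n=N$ yields the telescoping estimate
\[
\sum_{n=0}^{N} d^2(x_n,x_{n+1}) \le d^2(x_0,p) - d^2(x_{N+1},p) \le d^2(x_0,p).
\]
Letting $N\to\infty$, the series $\sum_{n=0}^{\infty} d^2(x_n,x_{n+1})$ converges, and in particular $\lim_{n\to\infty} d(x_n,x_{n+1})=0$. This takes care of the first conclusion and makes no use of $(C2)$.

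For the second conclusion, I would set $a_n := d(x_n,x_{n+1})/\gamma_n$. By $(C2)$, $(a_n)$ is nonincreasing, and it is nonnegative, so it converges to some $L\ge 0$. Since $(a_n)$ is nonincreasing, $a_n\ge L$ for every $n$, which gives
\[
d^2(x_n,x_{n+1}) = a_n^2\,\gamma_n^2 \ge L^2\,\gamma_n^2.
\]
Summing over $n$ and combining with the already-established convergence of $\sum d^2(x_n,x_{n+1})$ produces $L^2\sum_{n=0}^{\infty}\gamma_n^2 < \infty$. Since $\sum \gamma_n^2=\infty$ by hypothesis, we must have $L=0$, which is exactly the second claim.

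The only real idea is the last step, where $(C2)$ is used to bound $a_n$ from below by its limit, so that the hypothesis $\sum \gamma_n^2=\infty$ becomes available as an obstruction; everything else is either the telescoping from Lemma \ref{2-lem-1} or a monotone-sequence observation. I do not anticipate any serious obstacle.
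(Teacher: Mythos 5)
Your proposal is correct and follows essentially the same route as the paper: the telescoping bound from Lemma~\ref{2-lem-1} gives convergence of $\sum_{n} d^2(x_n,x_{n+1})$, and this is played off against the divergence of $\sum_n \gamma_n^2$ using the monotonicity from $(C2)$. The only (cosmetic) difference is that you extract the limit $L$ of the nonincreasing sequence and show $L=0$ directly, whereas the paper argues by contradiction with an arbitrary $\varepsilon>0$ — a phrasing it reuses later to obtain an explicit rate of convergence in the quantitative Theorem~\ref{conv-rate}.
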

\begin{proof}
Since $F\ne\emptyset$, there exists $p\in F$. Let $b>0$ be such that $d(x,p)\leq b$. For every $n\in \mathbb{N}$, we have, by Lemma~\ref{2-lem-1}, that 
\bua 
\sum_{k=0}^n d^2(x_k,x_{k+1}) \leq \sum_{k=0}^n ( d^2(x_k,p) - d^2(x_{k+1},p)) =  d^2(x,p) - d^2(x_{n+1},p) 
\leq b^2.
\eua
It follows that the series $\sum_{n=0}^\infty d^2(x_n,x_{n+1})$ converges, so $\limn d(x_n,x_{n+1})=0$.

We  prove now that $\lim_{n \to \infty} \frac{d(x_n,x_{n+1})}{\gamma_n}=0$. Let $\varepsilon > 0$. 
Since $\sum_{n=0}^\infty \gamma_n^2 = \infty$, there exists $N\in\N$ such that
$\sum_{k=0}^N \gamma_k^2 \geq b^2/\varepsilon^2$. If for all $k\in\{0,\ldots, N\}$ 
one has that $\frac{d(x_k,x_{k+1})}{\gamma_k}>\varepsilon$, we get that 
$$\sum_{k=0}^Nd^2(x_k,x_{k+1}) > \sum_{k=0}^{N}\gamma_k^2 \varepsilon^2 \geq b^2,$$
a contradiction.  Hence, there exists $M\in\{0,\ldots, N\}$ such that 
$\frac{d(x_M,x_{M+1})}{\gamma_M} \leq \varepsilon$. 
Since, by $(C2)$, the sequence $\left(\frac{d(x_n,x_{n+1})}{\gamma_n}\right)$ is nonincreasing, we get that 
$\frac{d(x_n,x_{n+1})}{\gamma_n} \leq \varepsilon$ for all $n \geq M$.
\end{proof}

\begin{proposition}\label{2-asreg}
Assume that $(C1)$ and $(C2)$ are satisfied. Then for all $m \in \mathbb{N}$, 
$$\lim_{n \to \infty} d(x_n,T_mx_n) = 0.$$
\end{proposition}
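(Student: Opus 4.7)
The plan is to bound $d(x_n, T_m x_n)$ via the triangle inequality by inserting the intermediate point $T_n x_n = x_{n+1}$:
\[
d(x_n, T_m x_n) \le d(x_n, x_{n+1}) + d(T_n x_n, T_m x_n).
\]
The first summand tends to $0$ by Lemma~\ref{2-lem-2}, so the whole problem reduces to controlling $d(T_n x_n, T_m x_n)$.

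For the second summand I would apply condition $(C1)$ directly with $w := x_n$, which gives
\[
d(T_n x_n, T_m x_n) \le \frac{|\gamma_n - \gamma_m|}{\gamma_n}\, d(x_n, T_n x_n) = \frac{|\gamma_n - \gamma_m|}{\gamma_n}\, d(x_n, x_{n+1}).
\]
The only remaining issue is that $|\gamma_n - \gamma_m|/\gamma_n$ need not be bounded (we have no a priori control on how small $\gamma_n$ may get relative to the fixed $\gamma_m$). I would handle this by the crude estimate $|\gamma_n - \gamma_m| \le \gamma_n + \gamma_m$, which yields
\[
\frac{|\gamma_n - \gamma_m|}{\gamma_n}\, d(x_n, x_{n+1}) \le d(x_n, x_{n+1}) + \gamma_m \cdot \frac{d(x_n, x_{n+1})}{\gamma_n}.
\]

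Now both conclusions of Lemma~\ref{2-lem-2} enter: since $m$ is fixed, $\gamma_m$ is a constant, and so $d(x_n, x_{n+1}) \to 0$ and $\gamma_m \cdot d(x_n, x_{n+1})/\gamma_n \to 0$. Combining, $d(T_n x_n, T_m x_n) \to 0$, and together with the first summand we conclude $d(x_n, T_m x_n) \to 0$, as desired. There is no real obstacle here — the proof is essentially a triangle inequality plus a direct application of $(C1)$ — and the role of $(C2)$ is precisely that it provides, through Lemma~\ref{2-lem-2}, the simultaneous vanishing of $d(x_n, x_{n+1})$ and of the ratio $d(x_n, x_{n+1})/\gamma_n$ that the estimate above requires.
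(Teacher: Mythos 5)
Your proposal is correct and coincides with the paper's own argument: the same triangle inequality through $x_{n+1}=T_nx_n$, the same application of $(C1)$ with $w:=x_n$, the same crude bound $|\gamma_n-\gamma_m|\leq\gamma_n+\gamma_m$ leading to $2d(x_n,x_{n+1})+\gamma_m\cdot\frac{d(x_n,x_{n+1})}{\gamma_n}$, and the same appeal to both limits in Lemma~\ref{2-lem-2}.
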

\begin{proof}
Let $m \in \mathbb{N}$. We get that for all $n \in \mathbb{N}$,
\bua
d(x_n,T_mx_n) &\leq & d(x_n,x_{n+1}) + d(x_{n+1},T_mx_n) = d(x_n,x_{n+1}) + d(T_nx_n, T_mx_n) \\
&\leq & d(x_n,x_{n+1}) + \frac{|\gamma_n-\gamma_m|}{\gamma_n}d(x_n,T_nx_n) \quad \text{by }(C1)\\
&\leq & 2d(x_n,x_{n+1}) + \gamma_m \cdot \frac{d(x_n,x_{n+1})}{\gamma_n}.
\eua
Our conclusion follows by applying Lemma~\ref{2-lem-2}.
\end{proof}

We can prove now the main result of this section.

\begin{theorem}[Abstract Proximal Point Algorithm]\label{gppa}
Let $X$ be a complete CAT(0) space and $(T_n)$ be a family of self-mappings of $X$ 
satisfying property $(P_2)$ and having common fixed points. Set $F:=\bigcap_{n \in \mathbb{N}} Fix(T_n)\neq\emptyset$.
For $x\in X$, let $(x_n)$ be defined by \eqref{def-main-iteration}.
Let  $(\gamma_n)$ be a sequence of positive real numbers such that $\sum_{n=0}^\infty \gamma_n^2 = \infty$. 
Assume that $(C1)$ and $(C2)$ hold.

Then  $(x_n)$ $\Delta$-converges to a point in $F$.
\end{theorem}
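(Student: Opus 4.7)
The plan is to apply Proposition~\ref{as-cen-unic-Delta}, so I need to verify its two hypotheses: Fej\'er monotonicity of $(x_n)$ with respect to $F$, and that the asymptotic center of every subsequence of $(x_n)$ lies in $F$. The first is already handed to me by Lemma~\ref{2-lem-1}, which also guarantees that $(x_n)$ (and hence every subsequence) is bounded. So the real work is the second condition.

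Let $(u_n)$ be an arbitrary subsequence of $(x_n)$. Since $X$ is a complete CAT(0) space, Proposition~\ref{CAT0-unique-ac} gives a unique asymptotic center $c$ of $(u_n)$, i.e.\ $A((u_n))=\{c\}$. I would like to show $c\in Fix(T_m)$ for every $m\in\mathbb{N}$, for then by Lemma~\ref{fixtn} we get $c\in F$. Fix $m$. By Lemma~\ref{P2-im-nonexp} each $T_m$ is nonexpansive, so for every $n$,
\[
d(T_mc,u_n)\leq d(T_mc,T_mu_n)+d(T_mu_n,u_n)\leq d(c,u_n)+d(T_mu_n,u_n).
\]
Now $(u_n)$ is a subsequence of $(x_n)$, and Proposition~\ref{2-asreg} (which uses exactly the hypotheses $(C1)$ and $(C2)$) gives $\lim_{n\to\infty}d(x_n,T_mx_n)=0$, hence $\lim_{n\to\infty}d(u_n,T_mu_n)=0$.

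This is precisely the setup for Lemma~\ref{UCW-useful-unique-as-center} applied to the sequence $(u_n)$ with unique asymptotic center $c$: take $\alpha_n:=1$ (so $\limsup\alpha_n=1$), $\beta_n:=d(T_mu_n,u_n)$ (so $\limsup\beta_n=0$), $y:=T_mc$, and $p=N=0$. The lemma yields $T_mc=c$. Since $m$ was arbitrary, $c\in\bigcap_{m\in\mathbb{N}}Fix(T_m)=F$, which is what was needed to apply Proposition~\ref{as-cen-unic-Delta} and conclude that $(x_n)$ $\Delta$-converges to some point of $F$.

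The main obstacle is really conceptual rather than computational: recognizing that the asymptotic-regularity output of Proposition~\ref{2-asreg} combined with the nonexpansivity of $T_m$ packages the ``demiclosedness-like'' statement $c\in Fix(T_m)$ cleanly through Lemma~\ref{UCW-useful-unique-as-center}, bypassing any direct manipulation of the quasi-linearization $\langle\cdot,\cdot\rangle$ or of inequality \eqref{bn}. Everything else, in particular the boundedness and Fej\'er monotonicity needed to invoke Proposition~\ref{as-cen-unic-Delta}, has already been assembled in the preceding lemmas.
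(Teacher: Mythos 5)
Your proof is correct and follows essentially the same route as the paper: Fej\'er monotonicity from Lemma~\ref{2-lem-1}, the unique asymptotic center of each subsequence via Proposition~\ref{CAT0-unique-ac}, the nonexpansivity estimate combined with Proposition~\ref{2-asreg} fed into Lemma~\ref{UCW-useful-unique-as-center} to place that center in $F$, and finally Proposition~\ref{as-cen-unic-Delta}. The only (harmless) redundancy is the appeal to Lemma~\ref{fixtn}, which is not needed once you show $c\in Fix(T_m)$ for every $m$.
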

\begin{proof}
Note first that by  Lemma~\ref{2-lem-1}, $(x_n)$ is Fej\'er monotone with respect to $F$, hence bounded. Let $(u_n)$ be an arbitrary subsequence of $(x_n)$. By Proposition~\ref{CAT0-unique-ac}, $(u_n)$ has a unique asymptotic center $u$. 
We shall prove that $u\in F$, so let $m\in\N$ be arbitrary. Note that
\[d(T_mu,u_n)\leq d(T_mu,T_mu_n)+d(u_n,T_mu_n)\leq d(u,u_n)+d(u_n,T_mu_n).\]
Applying Lemma~\ref{UCW-useful-unique-as-center} with $\alpha_n=1$, $\beta_n=d(u_n,T_mu_n)$, $p=N=0$ and using the fact that
$\lim_{n \to \infty} d(u_n,T_mu_n)=0$ (by Proposition~\ref{2-asreg}), we get that $T_mu=u$.

Finally, Proposition~\ref{as-cen-unic-Delta} yields that $(x_n)$ $\Delta$-converges to a point in $F$.
\end{proof}

\subsection{Jointly firmly nonexpansive families of mappings}\label{jfne}

We shall now proceed to the second stage of our abstraction -- that is, giving a natural condition for a family $(T_n)$ and a sequence $(\gamma_n)$ of positive numbers such that the previous general conditions are satisfied. This can be regarded as an extension of the project initiated in \cite{AriLeuLop14} with the definition and the asymptotic behaviour of a firmly nonexpansive mapping to the case of a countable family of mappings. Recall that the notion of a firmly nonexpansive mapping in a Hilbert space has two analogues when considered within the more general setting of CAT(0) spaces. In the same spirit, we shall present here two definitions that apply to families of mappings which coincide when restricted to Hilbert spaces.

In the sequel, $X$ is a CAT(0) space, $T_n: X \to X$ for every $n\in \N$ and $(\gamma_n)$ is a sequence of 
positive real numbers.

\begin{definition}
The family $(T_n)$ is said to be {\em jointly firmly nonexpansive with respect to} $(\gamma_n)$ 
if for all $n, m \in \mathbb{N}$, $x,y \in X$ and all $\alpha, \beta \in [0,1]$ such that 
$(1-\alpha)\gamma_n=(1-\beta)\gamma_m$, 
\beq
d(T_nx,T_my) \leq d((1-\alpha)x+\alpha T_nx,(1-\beta)y+\beta T_my). \label{def-jfne}
\eeq
\end{definition}

\begin{definition}
We say that the family $(T_n)$ is {\em jointly $(P_2)$ with respect to} $(\gamma_n)$ if for all 
$n, m \in \mathbb{N}$ and all $x,y \in X$, 
\beq
 \frac1{\gamma_m}(d^2(T_nx,T_my) + 
d^2(y,T_my) - d^2(y,T_nx)) \leq \frac1{\gamma_n}(d^2(x,T_my) - d^2(x,T_nx) - d^2(T_nx,T_my)).\label{def-jP2}
\eeq
\end{definition}

\begin{lemma}\label{jointly-implies-single}
If $(T_n)$ is jointly firmly nonexpansive (resp. jointly $(P_2)$) with respect to 
$(\gamma_n)$, then each $T_n$ is firmly nonexpansive (resp. satisfies property $(P_2)$). 
\end{lemma}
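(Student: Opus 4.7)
The plan is to observe that both definitions are stated over \emph{all} pairs $(n,m)$, so the single-mapping versions should fall out by specializing to the diagonal case $m = n$, at which point the coupling between $\gamma_n$ and $\gamma_m$ becomes trivial. Concretely, I would fix an index $n \in \mathbb{N}$, set $m := n$, and then take $x, y \in X$ arbitrary in both parts of the lemma.

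For the jointly firmly nonexpansive case, the constraint $(1-\alpha)\gamma_n = (1-\beta)\gamma_m$ reduces, when $m = n$, to $(1-\alpha)\gamma_n = (1-\beta)\gamma_n$. Since $\gamma_n > 0$, this forces $\beta = \alpha$. Substituting into \eqref{def-jfne} yields
\[
d(T_n x, T_n y) \leq d((1-\alpha)x + \alpha T_n x, (1-\alpha)y + \alpha T_n y)
\]
for every $\alpha \in [0,1]$, which is precisely Definition \ref{def-fn} applied to $T_n$.

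For the jointly $(P_2)$ case, setting $m = n$ in \eqref{def-jP2} removes the differing denominators, leaving
\[
d^2(T_n x, T_n y) + d^2(y, T_n y) - d^2(y, T_n x) \leq d^2(x, T_n y) - d^2(x, T_n x) - d^2(T_n x, T_n y).
\]
Rearranging this into $2d^2(T_n x, T_n y) \leq d^2(x, T_n y) + d^2(y, T_n x) - d^2(x, T_n x) - d^2(y, T_n y)$ recovers exactly property $(P_2)$ for $T_n$.

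There is essentially no obstacle here: the proof is just a diagonal specialization followed by algebraic rearrangement, and the positivity of $\gamma_n$ is only used to cancel it from the constraint in the firmly nonexpansive case. The content of the lemma is really just the observation that the joint definitions were designed so that their diagonal instances reproduce the existing pointwise notions.
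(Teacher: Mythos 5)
Your proof is correct and is exactly the paper's argument: the authors likewise specialize to $m=n$, note that the constraint forces $\alpha=\beta=t$ in the firmly nonexpansive case, and rearrange the diagonal instance of \eqref{def-jP2} to recover property $(P_2)$. Nothing further is needed.
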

\begin{proof}
Apply \eqref{def-jfne} (resp. \eqref{def-jP2}) for $m=n$. In the first case, remark that given $t \in [0,1]$, we take $\alpha = \beta = t$. 
\details{For $P_2$: We get that 
$d^2(T_nx,T_ny) + d^2(y,T_ny) - d^2(y,T_nx)\leq d^2(x,T_ny) - d^2(x,T_nx) - d^2(T_nx,T_ny)$, hence
$2d^2(T_nx,T_ny)\leq d^2(x,T_ny)+d^2(y,T_nx)-d^2(y,T_ny)-d^2(x,T_nx)$. Thus $T_n$ satisfies $P_2$.}
\end{proof}

\begin{proposition}\label{jfn-jp2}
If $(T_n)$ is jointly firmly nonexpansive with respect to $(\gamma_n)$, then $(T_n)$ is 
jointly $(P_2)$ with respect to $(\gamma_n)$.
\end{proposition}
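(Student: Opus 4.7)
The plan is to parallel the standard CAT(0) argument that firm nonexpansivity implies property $(P_2)$ for a single mapping, suitably adapted to handle two coupled parameters on the two sides. Fix $n, m \in \mathbb{N}$ and $x, y \in X$. For each $s \in (0, \min\{\gamma_n, \gamma_m\}]$, set $\alpha := 1 - s/\gamma_n$ and $\beta := 1 - s/\gamma_m$, so that $\alpha, \beta \in [0,1]$ and $(1-\alpha)\gamma_n = s = (1-\beta)\gamma_m$. Hypothesis \eqref{def-jfne} therefore applies and, after squaring, yields
\[
d^2(T_nx, T_my) \leq d^2(u_s, v_s),
\]
where $u_s := (1-\alpha)x + \alpha T_nx$ and $v_s := (1-\beta)y + \beta T_my$.

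The next step is to bound $d^2(u_s, v_s)$ from above by applying the CAT(0) inequality \eqref{def-CAT0} three times: once along the geodesic from $x$ to $T_nx$ at parameter $\alpha$, taking $v_s$ as the external point (which yields a bound in terms of $d^2(v_s, x)$, $d^2(v_s, T_nx)$ and $d^2(x, T_nx)$), and then twice along the geodesic from $y$ to $T_my$ at parameter $\beta$, with $x$ and $T_nx$ successively as external points (bounding $d^2(x, v_s)$ and $d^2(T_nx, v_s)$). Substituting and gathering the $d^2(T_nx, T_my)$-term on the left gives
\[
(1 - \alpha\beta)\, d^2(T_nx, T_my) \leq (1-\alpha)(1-\beta)\, d^2(x,y) + (1-\alpha)\beta\, d^2(x, T_my) + \alpha(1-\beta)\, d^2(y, T_nx) - \alpha(1-\alpha)\, d^2(x, T_nx) - \beta(1-\beta)\, d^2(y, T_my).
\]

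The final step is to plug in $1 - \alpha = s/\gamma_n$ and $1 - \beta = s/\gamma_m$, divide by $s > 0$, and let $s \to 0^+$. The coefficient of $d^2(T_nx, T_my)$ on the left tends to $1/\gamma_n + 1/\gamma_m$; the $d^2(x,y)$-coefficient equals $s/(\gamma_n\gamma_m)$ and vanishes; and the four remaining coefficients tend to $1/\gamma_n$ or $1/\gamma_m$. A brief rearrangement of the resulting inequality recovers precisely \eqref{def-jP2}. I expect the main obstacle to be nothing more than careful bookkeeping in the threefold CAT(0) expansion; once that is laid out cleanly, the limiting step is a short computation that mirrors the $t \to 1^-$ passage used in the single-mapping proof.
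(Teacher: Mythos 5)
Your proposal is correct and follows essentially the same route as the paper: the same choice of $\alpha,\beta$ with $(1-\alpha)\gamma_n=(1-\beta)\gamma_m=s$, the same threefold expansion via \eqref{def-CAT0}, and the same limiting passage (dividing by $s$ and letting $s\to 0^+$ is just the paper's division by $1-\alpha$ followed by $\gamma_n$, with $\alpha\to 1$). The intermediate inequality and the resulting coefficients all check out, so nothing further is needed.
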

\begin{proof}
Let $m,n \in \mathbb{N}$ and $x,y \in X$. We choose arbitrarily 
$\alpha \in \left(1- \min\left\{\gamma_m/\gamma_n,1\right\}, 1\right)$ 
and set $$\beta:= 1 - (1-\alpha)\frac{\gamma_n}{\gamma_m}.$$
Then $\beta\in (0,1)$ and $(1-\alpha)\gamma_n = (1-\beta)\gamma_m$.
\details{$\beta>0$ iff $(1-\alpha)\frac{\gamma_n}{\gamma_m}<1$ iff 
$1-\alpha<\frac{\gamma_m}{\gamma_n}$ iff 
$\alpha>1-\frac{\gamma_m}{\gamma_n}$, which is true since $\alpha>1- 
\min\left\{\frac{\gamma_m}{\gamma_n},1\right\}\geq 1-\frac{\gamma_m}{\gamma_n}$.
$\beta<1$ iff $(1-\alpha)\frac{\gamma_n}{\gamma_m}>0$ iff $\alpha<1$
Apply now \eqref{def-jfne}.} 
Hence, applying the fact that $(T_n)$ is jointly firmly nonexpansive and the inequality \eqref{def-CAT0} 
twice, we get that 
\begin{align*}
d^2(T_nx,T_my) &\leq  d^2((1-\alpha)x+\alpha T_nx,(1-\beta)y+\beta T_my)\\
&\leq  (1-\alpha)d^2(x,(1-\beta)y+\beta T_my)+\alpha d^2(T_nx,(1-\beta)y+\beta T_my)-\alpha(1-\alpha)d^2(x,T_nx)\\
&\leq  (1-\alpha)(1-\beta)d^2(x,y)+(1-\alpha)\beta d^2(x,T_my)-(1-\alpha)\beta(1-\beta)d^2(y,T_my)+\\
&\ \ \ + \alpha(1-\beta)d^2(T_nx,y)+\alpha\beta d^2(T_nx,T_my)-\alpha\beta(1-\beta)d^2(y,T_my)-\alpha(1-\alpha)d^2(x,T_nx)\\
&=  (1-\alpha)(1-\beta)d^2(x,y) + (1-\beta)\alpha d^2(T_nx,y) + (1-\alpha)\beta d^2(x,T_my) +\\
&\ \ \ + \alpha\beta d^2(T_nx,T_my) - \alpha(1-\alpha)d^2(x,T_nx) - \beta(1-\beta) d^2(y,T_my), 
\end{align*}
so
\begin{align*}
(1-\alpha\beta) d^2(T_nx,T_my) &\leq  (1-\alpha)(1-\beta)d^2(x,y) + (1-\beta)\alpha d^2(T_nx,y) + (1-\alpha)\beta d^2(x,T_my)- \\
&\ \ \  - \alpha(1-\alpha)d^2(x,T_nx) - \beta(1-\beta) d^2(y,T_my).
\end{align*}
Dividing now the above inequality by $1-\alpha > 0$, we obtain that
\begin{align*}
\frac{1-\alpha\beta}{1-\alpha}d^2(T_nx,T_my)& \leq (1-\beta)d^2(x,y) + \frac{(1-\beta)\alpha}{1-\alpha} d^2(T_nx,y) +\beta d^2(x,T_my) -\\
& \ \ \  - \alpha d^2(x,T_nx) - \frac{\beta(1-\beta)}{1-\alpha} d^2(y,T_my).
\end{align*}
By easy computations, one can see that 
$$\frac{1-\alpha\beta}{1-\alpha} = 1+ \alpha\frac{\gamma_n}{\gamma_m}, \quad 
\frac{(1-\beta)\alpha}{1-\alpha} = \alpha\frac{\gamma_n}{\gamma_m} \quad \text{and} \quad
\frac{\beta(1-\beta)}{1-\alpha} = \left(1 - (1 -\alpha)\frac{\gamma_n}{\gamma_m}\right)\frac{\gamma_n}{\gamma_m}.$$
\details{
\bua 
\frac{1-\alpha\beta}{1-\alpha}&=& \frac{1-\alpha\left(1 - (1-\alpha)\frac{\gamma_n}{\gamma_m}\right)}{1-\alpha}=
 \frac{1-\alpha+\alpha(1-\alpha)\frac{\gamma_n}{\gamma_m}}{1-\alpha}=1+\alpha\frac{\gamma_n}{\gamma_m}\\
 \frac{(1-\beta)\alpha}{1-\alpha} &=&  \frac{\left(1-\left(1 - (1-\alpha)\frac{\gamma_n}{\gamma_m}\right)\right)\alpha}
 {1-\alpha}
=\alpha\frac{\gamma_n}{\gamma_m}\\
\frac{\beta(1-\beta)}{1-\alpha} &=& 
\frac{\left(1 - (1 -\alpha)\frac{\gamma_n}{\gamma_m}\right)(1 -\alpha)\frac{\gamma_n}{\gamma_m}}{1-\alpha}
= \left(1 - (1 -\alpha)\frac{\gamma_n}{\gamma_m}\right)\frac{\gamma_n}{\gamma_m}
\eua}
Therefore, we have that 
\begin{align*}
\left(1+ \alpha\frac{\gamma_n}{\gamma_m}\right)d^2(T_nx,T_my) &\leq (1 -\alpha)\frac{\gamma_n}{\gamma_m}d^2(x,y) + 
\alpha\frac{\gamma_n}{\gamma_m}d^2(T_nx,y) + \left(1 - (1 -\alpha)\frac{\gamma_n}{\gamma_m}\right) d^2(x,T_my) -\\
& \ \ \  - \alpha d^2(x,T_nx) - \left(1 - (1 -\alpha)\frac{\gamma_n}{\gamma_m}\right)\frac{\gamma_n}{\gamma_m} d^2(y,T_my).
\end{align*}
Letting $\alpha \to 1$, we get that
$$
\left(1+ \frac{\gamma_n}{\gamma_m}\right)d^2(T_nx,T_my) \leq   \frac{\gamma_n}{\gamma_m} d^2(T_nx,y) +d^2(x,T_my) 
- d^2(x,T_nx) - \frac{\gamma_n}{\gamma_m} d^2(y,T_my),
$$
so
\begin{align*}
\frac{\gamma_n}{\gamma_m}\left(d^2(T_nx,T_my)+d^2(y,T_my)-d^2(T_nx,y)\right) \leq &  d^2(x,T_my)-d^2(x,T_nx)-d^2(T_nx,T_my).
\end{align*}
Divide by $\gamma_n$ to obtain \eqref{def-jP2}, our required inequality.
\end{proof}

Using the quasi-linearization function defined by \eqref{def-quasilin-fct}, the joint $(P_2)$ condition can equivalently be expressed as:
\begin{equation}\label{jp2}
\frac1{\gamma_m}\langle\vv{T_nxT_my},\vv{yT_my}\rangle\leq \frac1{\gamma_n}\langle\vv{T_nxT_my},\vv{xT_nx}\rangle,
\end{equation}
for all $n,m \in \N$.

\begin{proposition}\label{tntp}
Suppose that $(T_n)$ is jointly $(P_2)$ with respect to $(\gamma_n)$. Then for all 
$m,n\in\mathbb{N}$ and all $w \in X$, 
$$d(T_nw,T_mw) \leq \frac{|\gamma_n-\gamma_m|}{\gamma_n}d(w,T_nw).$$
\end{proposition}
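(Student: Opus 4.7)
The plan is to specialize the joint $(P_2)$ inequality in its quasi-linearization form \eqref{jp2} to the diagonal $x = y = w$, collapse the resulting expression using the axioms from Proposition~\ref{bnchar}, and then invoke the Cauchy--Schwarz-type inequality \eqref{CauchySchwartz}. A small sign-agreement observation at the end will let me avoid splitting into cases according to whether $\gamma_n \leq \gamma_m$ or $\gamma_n > \gamma_m$.

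Concretely, putting $x = y = w$ into \eqref{jp2} gives
\[
\frac{1}{\gamma_m}\langle\vv{T_nwT_mw},\vv{wT_mw}\rangle \;\leq\; \frac{1}{\gamma_n}\langle\vv{T_nwT_mw},\vv{wT_nw}\rangle.
\]
Next, using property (iv) of Proposition~\ref{bnchar} to insert the point $T_nw$ in the second slot on the left, together with property (i), I obtain the identity
\[
\langle\vv{T_nwT_mw},\vv{wT_mw}\rangle = \langle\vv{T_nwT_mw},\vv{wT_nw}\rangle + d^2(T_nw, T_mw).
\]
Substituting and multiplying through by $\gamma_m > 0$ yields the pivotal bound
\[
d^2(T_nw,T_mw) \;\leq\; \frac{\gamma_m-\gamma_n}{\gamma_n}\,\langle\vv{T_nwT_mw},\vv{wT_nw}\rangle.
\]

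At this point the right-hand coefficient $(\gamma_m - \gamma_n)/\gamma_n$ may be negative, but since the left-hand side is non-negative the factors $\gamma_m - \gamma_n$ and $\langle\vv{T_nwT_mw},\vv{wT_nw}\rangle$ must share the same sign, so their product equals $|\gamma_n - \gamma_m|\cdot|\langle\vv{T_nwT_mw},\vv{wT_nw}\rangle|$. Combining property (iii) of Proposition~\ref{bnchar} with \eqref{CauchySchwartz} gives the two-sided Cauchy--Schwarz bound $|\langle\vv{T_nwT_mw},\vv{wT_nw}\rangle| \leq d(T_nw, T_mw)\cdot d(w, T_nw)$; dividing by $d(T_nw, T_mw)$ (or noting the trivial case in which it vanishes) then delivers the claim. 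I do not anticipate any real difficulty here; the only subtlety is the sign-agreement step that obviates the case split, and the care needed to apply property (iv) in the correct direction.
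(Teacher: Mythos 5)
Your proof is correct. Both you and the paper start from the same specialization of the joint $(P_2)$ condition to $x=y=w$, but the arguments then part ways. The paper eliminates the quasi-linearization terms entirely: expanding $d^2(T_nw,T_mw)$ in two symmetric ways and combining, it arrives at the purely metric Claim $(\gamma_n+\gamma_m)\,d^2(T_nw,T_mw)\leq(\gamma_n-\gamma_m)\bigl(d^2(w,T_nw)-d^2(w,T_mw)\bigr)$, and then removes the unwanted quantity $d^2(w,T_mw)$ via the triangle (respectively reverse triangle) inequality, in two cases according to the sign of $\gamma_n-\gamma_m$. You instead keep the single term $\langle\vv{T_nwT_mw},\vv{wT_nw}\rangle$, use the nonnegativity of $d^2(T_nw,T_mw)$ to conclude that this term and $\gamma_m-\gamma_n$ have a nonnegative product (so the product equals $|\gamma_n-\gamma_m|$ times its absolute value), and then bound that absolute value by the two-sided Cauchy--Schwarz estimate obtained from \eqref{CauchySchwartz} and Proposition~\ref{bnchar}(iii). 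Your route is shorter, avoids the case split, and is the verbatim nonlinear transcription of the Hilbert-space argument ($\|T_nw-T_mw\|^2\leq\frac{\gamma_m-\gamma_n}{\gamma_n}\langle T_nw-T_mw,\,w-T_nw\rangle$ followed by Cauchy--Schwarz). What the paper's version buys in exchange for the case analysis is that, once its Claim is established, only the triangle inequality is needed, whereas inequality \eqref{CauchySchwartz} is, for geodesic spaces, equivalent to the CAT(0) condition; since $X$ is assumed CAT(0) throughout this section, both arguments are equally legitimate here.
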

\begin{proof}
Let $m,n \in \mathbb{N}$. We shall denote, for simplicity, $T:=T_n$, $U:=T_m$, 
$\lambda:=\gamma_n$, $\mu:=\gamma_m$.

We want to show that for all $w\in X$,
$$d(Tw,Uw) \leq \frac{|\lambda-\mu|}\lambda d(w,Tw).$$
If $Tw=Uw$, the statement is trivially true. Let $w \in X$ be such that $Tw \neq Uw$.\\

\noindent {\bf Claim:} $(\lambda+\mu)d^2(Tw,Uw) \leq (\lambda - \mu)(d^2(w,Tw) - d^2(w,Uw)).$\\[1mm]
{\bf Proof of claim:} 
We have that
$$\frac1\mu\langle \vv{TwUw},\vv{wUw} \rangle  \leq \frac1\lambda\langle \vv{TwUw},\vv{wTw} \rangle ,$$
and, by multiplying with $(-\lambda)$, we get that
\begin{equation}
\langle \vv{TwUw},\vv{Tww} \rangle \leq \frac\lambda\mu \langle \vv{TwUw},\vv{Uww} \rangle.\label{e1}
\end{equation}
A simple expansion of $\langle\cdot,\cdot\rangle$ shows that
\begin{equation}
d^2(Tw,Uw) = d^2(w,Uw) - d^2(w,Tw) + 2\langle \vv{TwUw},\vv{Tww} \rangle.\label{e2}
\end{equation}
By exchanging the roles of $T$ and $U$ in the above equation, we obtain that
\begin{equation}
d^2(Uw,Tw) = d^2(w,Tw) - d^2(w,Uw) + 2\langle \vv{UwTw},\vv{Uww} \rangle.\label{e3}
\end{equation}
Applying \eqref{e1} and \eqref{e2} and multiplying \eqref{e3} by $\frac\lambda\mu$, we get that
\begin{align*} 
d^2(Tw,Uw) &\leq  d^2(w,Uw) - d^2(w,Tw) + \frac{2\lambda}\mu\langle \vv{TwUw},\vv{Uww} \rangle,\\
\frac\lambda\mu d^2(Uw,Tw) &= \frac\lambda\mu d^2(w,Tw) - \frac\lambda\mu d^2(w,Uw) + 
\frac{2\lambda}\mu\langle \vv{UwTw},\vv{Uww} \rangle.
\end{align*}

As a consequence, it follows that
$$\left(1+\frac\lambda\mu\right)d^2(Tw,Uw) \leq \left(\frac\lambda\mu -1\right)(d^2(w,Tw) 
- d^2(w,Uw)).$$
Multiply by $\mu$ to get the claim.\hfill $\blacksquare$\\[2mm]
We distinguish now two cases, according to the sign of $\lambda-\mu$.

When $\lambda - \mu$ is negative, we obtain, using the claim, that
\begin{align*}
(\lambda+\mu)d^2(Tw,Uw) &\leq (\lambda - \mu)(d^2(w,Tw) - d^2(w,Uw))\\
&= (\mu-\lambda)(d^2(w,Uw) - d^2(w,Tw))\\
&\leq (\mu-\lambda)((d(w,Tw)+d(Tw,Uw))^2 - d^2(w,Tw))\\
&=(\mu-\lambda)d(Tw,Uw)(d(Tw,Uw)+2d(w,Tw)).
\end{align*}
Dividing by $d(Tw,Uw)\neq 0$, we have that 
$$(\lambda+\mu)d(Tw,Uw)\leq 2(\mu-\lambda)d(w,Tw) + (\mu-\lambda)d(Tw,Uw),$$
so 
$$2\lambda d(Tw,Uw)\leq 2(\mu-\lambda)d(w,Tw).$$
Thus, 
$$d(Tw,Uw) \leq \frac{\mu-\lambda}\lambda d(w,Tw) = \frac{|\lambda-\mu|}\lambda d(w,Tw),$$
as required.

Now, when $\lambda - \mu$ is positive, we proceed as follows. By the reverse triangle 
inequality for metric spaces, we have that
$$d^2(w,Uw) \geq  | d(Tw,Uw) - d(w,Tw)|^2 = d^2(Tw,Uw) - 2d(w,Tw)d(Tw,Uw) + d^2(w,Tw).$$
Applying the claim, we obtain that
\begin{align*}
(\lambda+\mu)d^2(Tw,Uw) &\leq (\lambda - \mu)(d^2(w,Tw) - d^2(w,Uw))\\
&\leq (\lambda - \mu)(2d(w,Tw)d(Tw,Uw) - d^2(Tw,Uw))\\
&= (\lambda-\mu)d(Tw,Uw)(2d(w,Tw) - d(Tw,Uw)).
\end{align*}
As above, one gets that $$d(Tw,Uw) \leq \frac{\lambda-\mu}\lambda d(w,Tw) = \frac{|\lambda-\mu|}\lambda d(w,Tw).$$
\details{Again, dividing by $d(Tw,Uw)\neq 0$, we get that:
$$(\lambda+\mu)d(Tw,Uw) \leq 2(\lambda-\mu)d(w,Tw) - (\lambda-\mu)d(Tw,Uw),$$
so that:
$$2\lambda d(Tw,Uw)\leq 2 (\lambda-\mu)d(w,Tw)$$
and
$$d(Tw,Uw) \leq \frac{\lambda-\mu}\lambda d(w,Tw) = \frac{|\lambda-\mu|}\lambda d(w,Tw).$$}
\end{proof}

\begin{corollary}
Suppose that $(T_n)$ is jointly $(P_2)$ with respect to $(\gamma_n)$. Then any two mappings of the family have the same set of fixed points.
\end{corollary}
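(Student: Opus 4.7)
The plan is to deduce this as a direct consequence of Proposition~\ref{tntp}, which has just been established under precisely the hypothesis that $(T_n)$ is jointly $(P_2)$ with respect to $(\gamma_n)$. That proposition furnishes exactly condition $(C1)$, so the argument is essentially the one already sketched (in the hidden details) for Lemma~\ref{fixtn}.

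Concretely, fix $n, m \in \mathbb{N}$ and let $p \in Fix(T_n)$. Applying Proposition~\ref{tntp} with $w = p$ yields
\[
d(T_n p, T_m p) \leq \frac{|\gamma_n - \gamma_m|}{\gamma_n} \, d(p, T_n p) = 0,
\]
since $T_n p = p$. Hence $T_m p = T_n p = p$, so $p \in Fix(T_m)$. This shows $Fix(T_n) \subseteq Fix(T_m)$, and swapping the roles of $n$ and $m$ gives the reverse inclusion, so $Fix(T_n) = Fix(T_m)$.

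There is no real obstacle here; the work has been absorbed into the proof of Proposition~\ref{tntp}. The only thing to be mindful of is that the factor $\gamma_n$ in the denominator is strictly positive (which is guaranteed since $(\gamma_n)$ was introduced as a sequence of positive reals), so the right-hand side is well-defined and equals zero as claimed.
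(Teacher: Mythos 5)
Your proposal is correct and matches the paper's own argument, which likewise derives the corollary from Proposition~\ref{tntp} combined with (the proof of) Lemma~\ref{fixtn}; the computation $d(T_np,T_mp)\leq \frac{|\gamma_n-\gamma_m|}{\gamma_n}d(p,T_np)=0$ is exactly the one the paper relies on.
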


\begin{proof}
It follows from Proposition~\ref{tntp} and Lemma~\ref{fixtn}.
\end{proof}

\begin{proposition}\label{cs} 
Assume that $(T_n)$ is jointly $(P_2)$ with respect to  $(\gamma_n)$. Let $x \in X$ and $(x_n)$ 
be given by \eqref{def-main-iteration}.
Then the sequence $\left(\frac{d(x_n,x_{n+1})}{\gamma_n}\right)$ is nonincreasing.
\end{proposition}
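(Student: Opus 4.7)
The plan is to invoke the joint $(P_2)$ hypothesis at the specific pair of indices $(n,n+1)$ and the specific pair of points $(x_n, x_{n+1})$, and then reduce the resulting inequality to the desired monotonicity by a single application of the triangle inequality.

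First, I would instantiate \eqref{def-jP2} with $m:=n+1$, $x:=x_n$, $y:=x_{n+1}$. Using the recurrence \eqref{def-main-iteration}, $T_n x_n = x_{n+1}$ and $T_{n+1} x_{n+1} = x_{n+2}$, and the obvious fact that $d(x_{n+1},x_{n+1})=0$, the left-hand side collapses to $\frac{2 d^2(x_{n+1},x_{n+2})}{\gamma_{n+1}}$, while the right-hand side becomes $\frac{1}{\gamma_n}\bigl(d^2(x_n,x_{n+2}) - d^2(x_n,x_{n+1}) - d^2(x_{n+1},x_{n+2})\bigr)$. So I get
\[
\frac{2 d^2(x_{n+1},x_{n+2})}{\gamma_{n+1}} \leq \frac{1}{\gamma_n}\bigl(d^2(x_n,x_{n+2}) - d^2(x_n,x_{n+1}) - d^2(x_{n+1},x_{n+2})\bigr).
\]

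The second step is to bound the mixed term $d^2(x_n,x_{n+2})$ from above via the triangle inequality: $d^2(x_n,x_{n+2}) \leq \bigl(d(x_n,x_{n+1})+d(x_{n+1},x_{n+2})\bigr)^2 = d^2(x_n,x_{n+1}) + 2 d(x_n,x_{n+1}) d(x_{n+1},x_{n+2}) + d^2(x_{n+1},x_{n+2})$. Plugging this into the previous inequality causes the $d^2(x_n,x_{n+1})$ and $d^2(x_{n+1},x_{n+2})$ terms on the right to cancel, leaving
\[
\frac{d^2(x_{n+1},x_{n+2})}{\gamma_{n+1}} \leq \frac{d(x_n,x_{n+1})\, d(x_{n+1},x_{n+2})}{\gamma_n}.
\]

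Finally, I would split into two cases. If $d(x_{n+1},x_{n+2})=0$, then the desired inequality $\frac{d(x_{n+1},x_{n+2})}{\gamma_{n+1}} \leq \frac{d(x_n,x_{n+1})}{\gamma_n}$ is trivial. Otherwise, dividing both sides by $d(x_{n+1},x_{n+2})>0$ yields exactly the required monotonicity step. I do not foresee any substantial obstacle: the whole argument is essentially a direct substitution into \eqref{def-jP2}, and the only mild subtlety is handling the possible degeneracy $d(x_{n+1},x_{n+2})=0$ when one wants to divide.
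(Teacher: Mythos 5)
Your proof is correct and follows essentially the same route as the paper's: the same instantiation of the joint $(P_2)$ condition at indices $(n,n+1)$ and points $(x_n,x_{n+1})$, the same cancellation, and the same case split on $d(x_{n+1},x_{n+2})=0$ before dividing. The only cosmetic difference is that the paper phrases the key estimate as the Cauchy--Schwarz inequality \eqref{CauchySchwartz} for the quasi-linearization function, which, for two ``vectors'' sharing the endpoint $x_{n+1}$, is exactly your triangle-inequality bound on $d^2(x_n,x_{n+2})$.
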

\begin{proof}
Let $n \in \mathbb{N}$. By \eqref{jp2},
$$\frac1{\gamma_{n+1}}\langle \vv{T_nx_nT_{n+1}x_{n+1}},\vv{x_{n+1}T_{n+1}x_{n+1}}\rangle \leq 
\frac1{\gamma_n}\langle \vv{T_nx_nT_{n+1}x_{n+1}},\vv{x_nT_nx_n}\rangle,$$
that is
$$\frac1{\gamma_{n+1}}\langle \vv{x_{n+1}x_{n+2}},\vv{x_{n+1}x_{n+2}}\rangle \leq 
\frac1{\gamma_n}\langle \vv{x_{n+1}x_{n+2}},\vv{x_nx_{n+1}}\rangle.$$
Thus,
\begin{align*}
0 &\leq \frac1{\gamma_n}\langle \vv{x_{n+1} x_{n+2}}, \vv{x_n  x_{n+1}} \rangle - 
\frac{d^2(x_{n+1},x_{n+2})}{\gamma_{n+1}}  \\
&= \gamma_{n+1} \left( \frac1{\gamma_n\gamma_{n+1}}\langle \vv{x_{n+1} x_{n+2}}, \vv{x_n  x_{n+1}} \rangle - \frac{d^2(x_{n+1},x_{n+2})}{\gamma_{n+1}^2} \right) \\
&\leq \gamma_{n+1} \left( \frac{d(x_n,x_{n+1})}{\gamma_n} \cdot \frac{d(x_{n+1},x_{n+2})}{\gamma_{n+1}} - \frac{d^2(x_{n+1},x_{n+2})}{\gamma_{n+1}^2} \right) \quad \text{by }\eqref{CauchySchwartz}\\
&= \gamma_{n+1} \cdot \frac{d(x_{n+1},x_{n+2})}{\gamma_{n+1}} \left( \frac{d(x_n,x_{n+1})}{\gamma_n}  - \frac{d(x_{n+1},x_{n+2})}{\gamma_{n+1}} \right).
\end{align*}
It follows that $\frac{d(x_{n+1},x_{n+2})}{\gamma_{n+1}} \leq \frac{d(x_n,x_{n+1})}{\gamma_n}$.
\details{
If $\frac{d(x_{n+1},x_{n+2})}{\gamma_{n+1}} \neq 0$, $\frac{d(x_n,x_{n+1})}{\gamma_n} - 
\frac{d(x_{n+1},x_{n+2})}{\gamma_{n+1}} \geq 0$, so $\frac{d(x_{n+1},x_{n+2})}{\gamma_{n+1}} 
\leq \frac{d(x_n,x_{n+1})}{\gamma_n}$. Otherwise, if $\frac{d(x_{n+1},x_{n+2})}{\gamma_{n+1}} = 0$, 
clearly $\frac{d(x_{n+1},x_{n+2})}{\gamma_{n+1}} \leq \frac{d(x_n,x_{n+1})}{\gamma_n}$.}
\end{proof}

We give now another abstract version of the Proximal Point Algorithm.

\begin{theorem}\label{appa}
Let $X$ be a complete CAT(0) space, $T_n: X \to X$ for every $n\in \N$ and $(\gamma_n)$ be  a sequence 
of positive real numbers satisfying $\sum_{n=0}^\infty \gamma_n^2 = \infty$.
Assume that the family  $(T_n)$ is jointly $(P_2)$ with respect to $(\gamma_n)$ (in particular, 
$(T_n)$ may be jointly firmly nonexpansive) and that $F:=\bigcap_{n \in \mathbb{N}} Fix(T_n)\neq\emptyset$.
Let $x \in X$ and $(x_n)$  be given by \eqref{def-main-iteration}.

Then $(x_n)$ $\Delta$-converges to a point in $F$.
\end{theorem}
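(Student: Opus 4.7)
The plan is to reduce Theorem~\ref{appa} to Theorem~\ref{gppa} by verifying that all of the latter's hypotheses follow from the joint $(P_2)$ assumption together with $F \neq \emptyset$ and $\sum \gamma_n^2 = \infty$. In other words, once I show that (a) each $T_n$ individually satisfies property $(P_2)$, (b) condition $(C1)$ holds with respect to the sequence $(\gamma_n)$, and (c) condition $(C2)$ holds for the iteration $(x_n)$, I can invoke Theorem~\ref{gppa} directly to obtain the claimed $\Delta$-convergence.

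First I would note that since $(T_n)$ is jointly $(P_2)$ with respect to $(\gamma_n)$, Lemma~\ref{jointly-implies-single} yields that each individual $T_n$ satisfies property $(P_2)$, which takes care of one of the standing hypotheses of Theorem~\ref{gppa}. The parenthetical remark in the statement (that joint firm nonexpansivity is a special case) is justified by Proposition~\ref{jfn-jp2}, so there is no loss of generality in assuming only the joint $(P_2)$ condition.

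Next I would verify $(C1)$. This is exactly the content of Proposition~\ref{tntp}: for all $n,m \in \N$ and all $w \in X$,
\[
d(T_nw,T_mw) \leq \frac{|\gamma_n - \gamma_m|}{\gamma_n} d(w,T_nw),
\]
which is the required step-size comparison estimate. For $(C2)$, I would appeal to Proposition~\ref{cs}, which asserts precisely that the sequence $\bigl(d(x_n,x_{n+1})/\gamma_n\bigr)$ is nonincreasing whenever $(x_n)$ is generated by the abstract iteration \eqref{def-main-iteration} from a jointly $(P_2)$ family.

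With $(T_n)$ consisting of $(P_2)$ maps, $F \neq \emptyset$, $\sum \gamma_n^2 = \infty$, and both $(C1)$ and $(C2)$ now established, all the assumptions of Theorem~\ref{gppa} are in place. Applying that theorem then gives $\Delta$-convergence of $(x_n)$ to a point in $F$, finishing the proof. There is no genuine obstacle here: the entire substance of the argument has been carried out in Propositions~\ref{jfn-jp2}, \ref{tntp}, \ref{cs} and Lemma~\ref{jointly-implies-single}; Theorem~\ref{appa} is simply the packaging of these ingredients into Theorem~\ref{gppa}.
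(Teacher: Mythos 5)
Your proposal is correct and is essentially identical to the paper's own proof: both deduce that each $T_n$ satisfies $(P_2)$ via Lemma~\ref{jointly-implies-single}, obtain $(C1)$ from Proposition~\ref{tntp} and $(C2)$ from Proposition~\ref{cs}, and then invoke Theorem~\ref{gppa}. No gaps.
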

\begin{proof}
By Lemma~\ref{jointly-implies-single}, each $T_n$ satisfies property $(P_2)$. We can now apply  
Theorem \ref{gppa}, as conditions $(C1)$ and $(C2)$ follow from Propositions~\ref{tntp} and \ref{cs}, respectively.
\end{proof}

\subsubsection{The case of Hilbert spaces}

Assume now that $H$ is a Hilbert space with inner product $\langle\cdot,\cdot\rangle$. We show 
next that joint firm nonexpansivity coincides with the joint $(P_2)$ condition.

\begin{proposition}\label{H-jfne=jP2}
Let $(T_n )$ be a family of self-mappings of $H$ and $(\gamma_n)$ be a sequence of positive real numbers. Then 
$(T_n)$ is jointly $(P_2)$ with respect to $(\gamma_n)$ if and only if $(T_n)$ is jointly 
firmly nonexpansive with respect to  $(\gamma_n)$.
\end{proposition}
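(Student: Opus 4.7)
The plan is as follows. By Proposition~\ref{jfn-jp2}, joint firm nonexpansivity implies joint $(P_2)$ in any CAT(0) space, so only the converse direction requires new work in the Hilbert setting.

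I would begin by using \eqref{eq-quasi-inner} to recast both conditions in inner-product form. Clearing the positive denominators $\gamma_n, \gamma_m$ in \eqref{jp2} and rearranging, the joint $(P_2)$ property in $H$ is equivalent to the scalar inequality
\[
\bigl\langle T_nx - T_my,\ \gamma_m(x - T_nx) - \gamma_n(y - T_my)\bigr\rangle \geq 0,
\]
which I shall call $(\ast)$, holding for all $n,m \in \N$ and $x,y \in H$.

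Next, to handle the jointly firmly nonexpansive condition \eqref{def-jfne}, I would parametrize the constraint $(1-\alpha)\gamma_n = (1-\beta)\gamma_m$ by a single nonnegative scalar $t := (1-\alpha)\gamma_n$, so that $1-\alpha = t/\gamma_n$, $1-\beta = t/\gamma_m$, and $\alpha,\beta \in [0,1]$ corresponds exactly to $t \in [0,\min(\gamma_n,\gamma_m)]$. Grouping terms on the right-hand side of \eqref{def-jfne} around $T_nx - T_my$ yields
\[
(1-\alpha)x + \alpha T_nx - (1-\beta)y - \beta T_my = u + tv,
\]
where $u := T_nx - T_my$ and $v := (x - T_nx)/\gamma_n - (y - T_my)/\gamma_m$. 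Thus joint firm nonexpansivity is equivalent to $\|u\|^2 \leq \|u + tv\|^2$ for every $t$ in the nondegenerate interval $[0,\min(\gamma_n,\gamma_m)]$, which after expanding the square becomes $2t\langle u,v\rangle + t^2\|v\|^2 \geq 0$.

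Finally, this last inequality holds on a neighborhood of $0$ if and only if $\langle u,v\rangle \geq 0$: dividing by $t > 0$ and passing to the limit $t \to 0^+$ gives one direction, while the reverse is immediate for any $t \geq 0$. Multiplying $\langle u,v\rangle \geq 0$ by $\gamma_n\gamma_m > 0$ reproduces exactly $(\ast)$, closing the equivalence. The only real obstacle is recognizing the right parametrization of the constraint $(1-\alpha)\gamma_n = (1-\beta)\gamma_m$ by a single parameter $t$; once this is done, the argument reduces to elementary inner-product manipulation and monotonicity of $t \mapsto \|u + tv\|^2$ on a half-line.
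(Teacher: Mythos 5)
Your proposal is correct and follows essentially the same route as the paper: the paper also handles the forward direction via Proposition~\ref{jfn-jp2}, and for the converse it sets $\delta:=(1-\alpha)\gamma_n=(1-\beta)\gamma_m$ (your $t$), rewrites the convex combinations as $T_nx+\frac{\delta}{\gamma_n}(x-T_nx)$ and $T_my+\frac{\delta}{\gamma_m}(y-T_my)$, expands the squared norm of $u+\delta v$, and observes that the cross term $\langle u,v\rangle\geq 0$ is exactly the joint $(P_2)$ inequality via \eqref{eq-quasi-inner} and \eqref{jp2}. Your additional remark that the scalar inequality $2t\langle u,v\rangle+t^2\|v\|^2\geq 0$ on a nondegenerate interval is in fact equivalent to $\langle u,v\rangle\geq 0$ (via $t\to 0^+$) is a correct observation but is not needed for the direction actually being proved.
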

\begin{proof}
``$\Leftarrow$'' By Proposition~\ref{jfn-jp2}.

``$\Rightarrow$'' Let $m,n\in\mathbb{N}$, $x,y\in H$ and $\alpha,\beta \in [0,1]$ be such that 
$(1-\alpha)\gamma_n = (1-\beta)\gamma_m =: \delta$. 
A simple computation yields the following two identities
$$(1-\alpha)x + \alpha T_n x = T_n x + \frac{\delta}{\gamma_n}(x-T_nx)\quad \text{and}\quad 
(1-\beta)y + \beta T_m y = T_m y + \frac{\delta}{\gamma_m}(y-T_m y).$$
\details{
$T_n x + \frac{\delta}{\gamma_n}(x-T_nx)=T_n x+(1-\alpha)(x-T_nx)=(1-\alpha)x+\alpha T_nx$
}
It follows that 
\begin{align*}
\|((1-\alpha)x + \alpha T_n x) - ((1-\beta)y + \beta T_my)\|^2 =&\ \left\|(T_nx - T_my) + 
\left(\frac\delta{\gamma_n}(x-T_nx) -\frac\delta{\gamma_m}(y-T_m y)\right) \right\|^2 \\
=&\ \|T_nx - T_my\|^2 + \delta^2\left\|\frac1{\gamma_n}(x-T_nx) -\frac1{\gamma_m}(y-T_m y)\right\|^2 \\
&+2\delta\left\langle T_nx - T_my, \frac1{\gamma_n}(x-T_nx) -\frac1{\gamma_m}(y-T_m y) \right\rangle .
\end{align*}
In order to show that the right-hand side is greater than or equal to $\|T_nx - T_my\|^2$, which is what 
we are aiming to prove here, it is sufficient to show that
$$D:=\left\langle T_nx - T_my, \frac1{\gamma_n}(x-T_nx) -\frac1{\gamma_m}(y-T_m y) \right\rangle\geq 0.$$
Remark that 
\begin{align*}
D &= \frac1{\gamma_n}\left\langle T_nx - T_my,x-T_nx\right\rangle  - \frac1{\gamma_m}\left\langle T_nx - T_my,y-T_m y\right\rangle\\
&= \frac1{\gamma_n}\langle\vv{T_nxT_my},\vv{xT_nx}\rangle - \frac1{\gamma_m}\langle\vv{T_nxT_my},\vv{yT_my}\rangle \quad 
\text{by \eqref{eq-quasi-inner}}\\
&\geq 0 \quad \text{by \eqref{jp2}}.
\end{align*}
Thus, $(T_n)$ is jointly firmly nonexpansive with respect to $(\gamma_n)$.
\end{proof}

Since $\Delta$-convergence coincides with weak convergence in Hilbert spaces, we get, as 
an immediate consequence of Theorem~\ref{appa}, the following abstract version of the Proximal Point Algorithm.
\begin{theorem}\label{appa-Hilbert}
Let $H$ be a Hilbert space, $T_n: H \to H$ for every $n\in \N$ and $(\gamma_n)$ be  a sequence 
of positive real numbers satisfying $\sum_{n=0}^\infty \gamma_n^2 = \infty$.
Assume that the family  $(T_n)$  is jointly firmly nonexpansive with respect to $(\gamma_n)$
 and that $F:=\bigcap_{n \in \mathbb{N}} Fix(T_n)\neq\emptyset$.
Let $x \in H$ and $(x_n)$  be given by \eqref{def-main-iteration}.

Then $(x_n)$ converges weakly to a point in $F$.
\end{theorem}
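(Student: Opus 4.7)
The plan is to derive Theorem~\ref{appa-Hilbert} as a direct corollary of Theorem~\ref{appa}, essentially by observing that Hilbert spaces sit inside the CAT(0) framework already developed and that $\Delta$-convergence specializes to weak convergence there.

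First I would note that any real Hilbert space $H$ is a complete CAT(0) space (the defining inequality \eqref{def-CAT0} holds with equality in the inner-product setting, via the parallelogram identity). Hence all of the preceding machinery applies to $H$. Given that $(T_n)$ is jointly firmly nonexpansive with respect to $(\gamma_n)$, Proposition~\ref{jfn-jp2} immediately tells us that $(T_n)$ is also jointly $(P_2)$ with respect to $(\gamma_n)$. Combined with $\sum_{n=0}^\infty \gamma_n^2 = \infty$ and $F \neq \emptyset$, the hypotheses of Theorem~\ref{appa} are all in place.

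Applying Theorem~\ref{appa} to the sequence $(x_n)$ defined by \eqref{def-main-iteration} yields that $(x_n)$ $\Delta$-converges to some point in $F$. The final step is to invoke the fact, noted in Section~\ref{sec:prelim}, that in Hilbert spaces $\Delta$-convergence coincides with weak convergence (cf.\ \cite[Exercise 3.1]{Bac14}), from which we conclude that $(x_n)$ converges weakly to a point in $F$.

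There is no real obstacle here: the theorem is essentially a restatement of Theorem~\ref{appa} in the specialized Hilbert space setting, made possible by Proposition~\ref{jfn-jp2} (to reduce from joint firm nonexpansivity to the joint $(P_2)$ hypothesis actually used) and by the coincidence of $\Delta$-convergence with weak convergence in Hilbert spaces. The genuine content has already been packaged into those earlier results; the proof will therefore be only a line or two.
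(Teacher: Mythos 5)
Your proposal is correct and follows exactly the route the paper takes: the theorem is stated there as an immediate consequence of Theorem~\ref{appa} (with joint firm nonexpansivity reduced to the joint $(P_2)$ condition via Proposition~\ref{jfn-jp2}, as Theorem~\ref{appa} itself already notes parenthetically), combined with the fact that $\Delta$-convergence coincides with weak convergence in Hilbert spaces. Nothing is missing.
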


We are now in a position to prove that specific instances of the proximal point algorithm
satisfy the stronger requirement that their associated families of resolvents are jointly 
firmly nonexpansive, thus justifying our choice of definitions. Three concrete problems -- minimizing 
convex functions, finding fixed points of nonexpansive mappings and finding zeros of maximally 
monotone operators -- are used to illustrate this fact. We may then apply Theorems~\ref{appa} 
and \ref{appa-Hilbert} in order to obtain classical weak convergence results for these iterations.

\subsection{Minimizers of convex proper lsc functions}\label{minim-conv-lsc}

In the sequel, $X$ is a complete CAT(0) space and $f: X \to (-\infty, \infty]$ is a convex, proper, lower semicontinuous (lsc) 
function. A point $x\in X$ is said to be a {\em minimizer} of $f$ if $f(x)=\inf_{y\in X} f(y)$. 
The set of minimizers of $f$ is denoted by $Argmin(f)$. 

For any $\gamma > 0$, let us denote, following \cite{Bac13}, 
\[J_\gamma:X \to X, \quad J_\gamma(x) := {\arg\!\min}_{y \in X} \left[f(y) + \frac1{2\gamma} d^2(x,y)\right].\]
The mapping $J_\gamma$, defined in the context of CAT(0) spaces by Jost \cite{Jos95}, is called the {\it (Moreau-Yosida) resolvent} or 
the {\it proximal mapping} of $f$ of order $\gamma$. 

We recall in the following proposition some well-known properties proven in \cite{Jos95}.

\begin{proposition}\label{Jgamaf-prop}
Let  $\gamma >0 $. Then
\be
\item\label{Fix-Jgamma_f=argmin-f} $Fix(J_\gamma) =Argmin(f)$.
\item\label{Jgamaf-ne} $J_\gamma$ is nonexpansive.
\item\label{Jgamaf-jos} For all $x \in X$ and all $t \in [0,1]$, the following holds:
$$J_{(1-t)\gamma}((1-t)x+ tJ_{\gamma}(x))=J_{\gamma}(x).$$
\ee
\end{proposition}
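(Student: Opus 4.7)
The plan is to first extract a variational inequality that characterizes the resolvent, then deploy it to handle all three items uniformly. Specifically, I claim that for $y = J_\gamma(x)$ and any $z \in X$,
\begin{equation}\label{VI-plan}
2\gamma[f(z) - f(y)] + d^2(y,z) + d^2(x,y) \geq d^2(x,z).
\end{equation}
To derive \eqref{VI-plan}, I would apply the defining minimization property at the interpolant $w_s := (1-s)y + sz$ for $s \in (0,1]$, bound $f(w_s) \leq (1-s)f(y) + sf(z)$ by convexity of $f$, bound $d^2(x,w_s)$ using the CAT(0) inequality \eqref{def-CAT0}, then cancel the common terms, divide by $s$, and let $s \to 0^+$.

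For (i), one direction is immediate: if $y \in Argmin(f)$, then $f(y) + \frac{1}{2\gamma}d^2(y,y) = f(y) \leq f(z) + \frac{1}{2\gamma}d^2(y,z)$ for all $z$, so by uniqueness of the minimizer in the definition of the resolvent, $J_\gamma(y) = y$. Conversely, if $J_\gamma(y) = y$, the variational inequality \eqref{VI-plan} applied with $x = y$ reads $2\gamma[f(z) - f(y)] + d^2(y,z) \geq 0$; replacing $z$ by $(1-s)y + sz$ and repeating the $s \to 0^+$ argument yields $f(y) \leq f(z)$. For (ii), apply \eqref{VI-plan} with $y_1 = J_\gamma(x_1)$ tested against $z = y_2 = J_\gamma(x_2)$, then swap the roles; the two inequalities added together cause the $f$-terms to cancel, giving exactly
\[
2d^2(J_\gamma x_1, J_\gamma x_2) \leq d^2(x_1, J_\gamma x_2) + d^2(x_2, J_\gamma x_1) - d^2(x_1, J_\gamma x_1) - d^2(x_2, J_\gamma x_2),
\]
which is property $(P_2)$. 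Nonexpansivity then follows from Lemma \ref{P2-im-nonexp}.

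For (iii), fix $y := J_\gamma(x)$ and $w := (1-t)x + ty$ with $t \in [0,1)$; I want to check that $y$ satisfies the variational inequality characterizing $J_{(1-t)\gamma}(w)$, namely
\[
2(1-t)\gamma[f(z) - f(y)] + d^2(y,z) + d^2(w,y) \geq d^2(w,z) \quad \text{for all } z \in X,
\]
which, by uniqueness of minimizers, will force $J_{(1-t)\gamma}(w) = y$. Since $w$ lies on the geodesic from $x$ to $y$ at parameter $t$, we have $d^2(w,y) = (1-t)^2 d^2(x,y)$. Bounding $d^2(w,z)$ from above via the CAT(0) inequality \eqref{def-CAT0} applied to the geodesic $[x,y]$ yields $d^2(w,z) \leq (1-t)d^2(x,z) + td^2(y,z) - t(1-t)d^2(x,y)$. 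Substituting and simplifying (dividing through by $1-t$) reduces the desired inequality precisely to \eqref{VI-plan} for $y = J_\gamma(x)$ tested at $z$. The boundary case $t = 1$ is handled by the standard convention $J_0 = \mathrm{id}$.

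I expect the main subtlety to lie in the derivation of \eqref{VI-plan} rather than in any individual item: one needs to be careful that the CAT(0) inequality is used in the correct direction (upper bound on $d^2(x,w_s)$) and that the asymptotic argument as $s \to 0^+$ is valid after dividing by $s$. Beyond that, item (iii) is delicate only algebraically — once \eqref{VI-plan} is in hand, it is really a one-line reduction after substituting $d^2(w,y) = (1-t)^2 d^2(x,y)$ and the CAT(0) bound on $d^2(w,z)$, with the factor $(1-t)$ matching cleanly on both sides.
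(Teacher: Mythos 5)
The paper does not actually prove this proposition; it is recalled as known, with the proofs attributed to Jost \cite{Jos95}. Your plan of deriving a variational inequality for the resolvent and running all three items through it is the standard route, but your key inequality is stated with the wrong signs, and this becomes fatal in item (iii). The derivation you describe (minimality at $w_s=(1-s)y+sz$, convexity of $f$, inequality \eqref{def-CAT0}, divide by $s$, let $s\to 0^+$) in fact produces
\[
d^2(x,y)+d^2(y,z)-d^2(x,z)\;\le\; 2\gamma\bigl(f(z)-f(y)\bigr),
\]
whereas your inequality carries the three distance terms with the opposite signs. Your version is genuinely false: take $X=\mathbb{R}$, $f(p)=p$ (proper, convex, lsc), $\gamma=1$, $x=0$, so $y=J_1(0)=-1$; at $z=-10$ your inequality asserts $-18+81+1\ge 100$. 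With the corrected inequality, items (i) and (ii) go through exactly as you intend --- indeed (ii) \emph{requires} the corrected version: adding your two stated inequalities yields the reverse of $(P_2)$, which is useless, while adding the corrected ones yields $(P_2)$ as claimed.

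Item (iii) does not survive the sign correction, because your reduction is calibrated to the false inequality: substituting the \eqref{def-CAT0} upper bound for $d^2(w,z)$ and dividing by $1-t$ lands precisely on your (false) key inequality, not on the correct one. The directional problem is this: to show that $y$ minimizes $z\mapsto f(z)+\frac{1}{2(1-t)\gamma}d^2(w,z)$ you must bound $d^2(w,z)$ from \emph{below} (it sits on the large side of the inequality to be verified), and \eqref{def-CAT0} only supplies an upper bound; the lower bound you would need, $d^2(w,z)\ge (1-t)d^2(x,z)-(1-t)d^2(y,z)-t(1-t)d^2(x,y)$, is an equality-defect statement that is trivial in Hilbert space but is not delivered by this substitution in a general CAT(0) space. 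Moreover, your form of the ``variational inequality characterizing $J_{(1-t)\gamma}(w)$'' does not imply minimality even if established, so the appeal to uniqueness would not close the argument. A proof of (iii) that does work from the corrected inequality: set $y':=J_{(1-t)\gamma}(w)$, add the minimality of $y'$ tested at $y$ to the corrected variational inequality for $y=J_\gamma(x)$ tested at $y'$, use $d(w,y)=(1-t)d(x,y)$ and $d(x,y')\le t\,d(x,y)+d(w,y')$, and observe that the surviving terms arrange into
\[
t\left(\frac{d(w,y')}{\sqrt{1-t}}-\sqrt{1-t}\,d(x,y)\right)^{2}+d^2(y,y')\le 0,
\]
which forces $y'=y$ for $t\in[0,1)$. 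So: fix the sign in the key inequality (after which (i) and (ii) stand) and replace the argument for (iii).
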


\begin{proposition}\label{Jgammaf-jfne}
Let $(\gamma_n)$ be a sequence of positive real numbers. Then the family $(J_{\gamma_n})$ is 
jointly firmly nonexpansive with respect to $(\gamma_n)$.
\end{proposition}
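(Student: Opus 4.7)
The plan is to reduce the joint firm nonexpansivity condition for the family $(J_{\gamma_n})$ to the nonexpansivity of a single resolvent, using the Jost-type semigroup identity recorded in Proposition~\ref{Jgamaf-prop}\eqref{Jgamaf-jos}.

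Fix $n,m \in \mathbb{N}$, $x,y \in X$ and $\alpha,\beta \in [0,1]$ with $(1-\alpha)\gamma_n = (1-\beta)\gamma_m$, and set $\delta := (1-\alpha)\gamma_n = (1-\beta)\gamma_m \geq 0$. I first handle the trivial case $\delta = 0$: since $\gamma_n,\gamma_m > 0$, this forces $\alpha = \beta = 1$, and then the desired inequality
$$d(J_{\gamma_n}x,J_{\gamma_m}y) \leq d((1-\alpha)x + \alpha J_{\gamma_n}x,(1-\beta)y + \beta J_{\gamma_m}y)$$
reduces to $d(J_{\gamma_n}x,J_{\gamma_m}y) \leq d(J_{\gamma_n}x,J_{\gamma_m}y)$.

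For the main case $\delta > 0$, I would apply Proposition~\ref{Jgamaf-prop}\eqref{Jgamaf-jos} twice: once with the parameter choice $t = \alpha$ and $\gamma = \gamma_n$ (so that $(1-t)\gamma = \delta$), yielding
$$J_\delta\bigl((1-\alpha)x + \alpha J_{\gamma_n}x\bigr) = J_{\gamma_n}x,$$
and once with $t = \beta$ and $\gamma = \gamma_m$, yielding
$$J_\delta\bigl((1-\beta)y + \beta J_{\gamma_m}y\bigr) = J_{\gamma_m}y.$$
Thus $J_{\gamma_n}x$ and $J_{\gamma_m}y$ are realized as the images, under the \emph{same} resolvent $J_\delta$, of the two points appearing on the right-hand side of \eqref{def-jfne}.

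Finally, I invoke the nonexpansivity of $J_\delta$ from Proposition~\ref{Jgamaf-prop}\eqref{Jgamaf-ne} to conclude
$$d(J_{\gamma_n}x,J_{\gamma_m}y) = d\bigl(J_\delta((1-\alpha)x + \alpha J_{\gamma_n}x),\, J_\delta((1-\beta)y + \beta J_{\gamma_m}y)\bigr) \leq d\bigl((1-\alpha)x + \alpha J_{\gamma_n}x,\,(1-\beta)y + \beta J_{\gamma_m}y\bigr),$$
which is exactly \eqref{def-jfne}. There is no real obstacle here: the proof is essentially a one-line application of Jost's semigroup identity plus nonexpansivity, which is the whole point of the abstraction — the ingredients already isolated in Proposition~\ref{Jgamaf-prop} package precisely the content needed for joint firm nonexpansivity, with the constraint $(1-\alpha)\gamma_n = (1-\beta)\gamma_m$ being exactly what guarantees that both sides end up inside the same $J_\delta$.
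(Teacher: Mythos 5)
Your proof is correct and follows essentially the same route as the paper's: apply the semigroup identity of Proposition~\ref{Jgamaf-prop}.\eqref{Jgamaf-jos} to rewrite both $J_{\gamma_n}x$ and $J_{\gamma_m}y$ as images under the common resolvent $J_\delta$, then conclude by its nonexpansivity. Your explicit treatment of the degenerate case $\alpha=\beta=1$ (where $J_0$ is undefined) is a small extra point of care that the paper's proof leaves implicit.
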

\begin{proof}
Let $m,n\in\mathbb{N}$, $x,y\in X$ and $\alpha,\beta \in [0,1]$ be such that
$(1-\alpha)\gamma_n = (1-\beta)\gamma_m =:\delta.$
Applying Proposition~\ref{Jgamaf-prop}, we get that 
\begin{align*}
d(J_{\gamma_n}x, J_{\gamma_m} y) &= d(J_{(1-\alpha)\gamma_n}((1-\alpha)x+\alpha J_{\gamma_n } x), 
J_{(1-\beta)\gamma_m}((1-\beta)y+\beta J_{\gamma_m} y)) \\
&= d(J_{\delta}((1-\alpha)x+\alpha J_{\gamma_n} x), J_{\delta}((1-\beta)y+\beta J_{\gamma_m} y)) \\
&\leq d((1-\alpha)x+\alpha J_{\gamma_n} x,(1-\beta)y+\beta J_{\gamma_m} y).
\end{align*}
\end{proof}

As a consequence of Theorem~\ref{appa}, we get the following  $\Delta$-convergence result.

\begin{theorem}\label{ppa-lsc}
Assume that $Argmin(f)\ne\emptyset$ and let $(\gamma_n)$ be a sequence of positive real numbers such that $\sum_{n=0}^\infty \gamma_n^2 = \infty$. 
For any $x \in X$, define the sequence $(x_n)$, starting with $x$, by 
\begin{equation}\label{def-lsc-ppa}
x_0:=x, \quad  x_{n+1}:=J_{\gamma_n}x_n \, \text{~for all }n \in \mathbb{N}.
\end{equation}
Then $(x_n)$ $\Delta$-converges to a minimizer of $f$.
\end{theorem}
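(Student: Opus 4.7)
The plan is to simply apply the abstract Theorem~\ref{appa} to the family $T_n := J_{\gamma_n}$ with the given step-sizes $(\gamma_n)$, since all the hypotheses are already in place from the preceding propositions. The main work has really been done in setting up the jointly firmly nonexpansive framework, so this final step is essentially a verification.

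First I would observe that the iteration \eqref{def-lsc-ppa} is precisely the instance of \eqref{def-main-iteration} with $T_n := J_{\gamma_n}$. To invoke Theorem~\ref{appa} I need two things: that the family $(J_{\gamma_n})$ is jointly $(P_2)$ with respect to $(\gamma_n)$, and that $F := \bigcap_{n \in \N} Fix(J_{\gamma_n}) \neq \emptyset$. The first holds because Proposition~\ref{Jgammaf-jfne} gives us the stronger conclusion that $(J_{\gamma_n})$ is jointly firmly nonexpansive with respect to $(\gamma_n)$, which by Proposition~\ref{jfn-jp2} implies the joint $(P_2)$ property. For the second, Proposition~\ref{Jgamaf-prop}\eqref{Fix-Jgamma_f=argmin-f} yields $Fix(J_{\gamma_n}) = Argmin(f)$ for every $n \in \N$, so
\[
F \;=\; \bigcap_{n \in \N} Fix(J_{\gamma_n}) \;=\; Argmin(f),
\]
which is nonempty by assumption.

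The hypothesis $\sum_{n=0}^\infty \gamma_n^2 = \infty$ is assumed directly, matching the standing assumption of Theorem~\ref{appa}. Therefore Theorem~\ref{appa} applies and delivers the $\Delta$-convergence of $(x_n)$ to some point in $F = Argmin(f)$, which is exactly what we need.

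There is really no technical obstacle at this stage: the entire difficulty was absorbed into establishing joint firm nonexpansivity of the resolvent family (Proposition~\ref{Jgammaf-jfne}) via Jost's identity Proposition~\ref{Jgamaf-prop}\eqref{Jgamaf-jos}, and into proving the abstract convergence Theorem~\ref{appa}. The only thing worth emphasizing in the write-up is that the common-fixed-point set coincides with the set of minimizers, so that ``a point in $F$'' is literally ``a minimizer of $f$''.
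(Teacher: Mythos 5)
Your proposal is correct and follows exactly the same route as the paper: instantiate $T_n := J_{\gamma_n}$, identify $F = Argmin(f)$ via Proposition~\ref{Jgamaf-prop}.\eqref{Fix-Jgamma_f=argmin-f}, invoke joint firm nonexpansivity from Proposition~\ref{Jgammaf-jfne}, and apply Theorem~\ref{appa}. Nothing is missing.
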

\begin{proof}
For all $n\in\N$, put $T_n:=J_{\gamma_n}$. By Proposition~\ref{Jgamaf-prop}.\eqref{Fix-Jgamma_f=argmin-f}, $Fix(T_n) = Argmin(f)$ for all $n \in \N$.
Furthermore, by Proposition~\ref{Jgammaf-jfne}, the family $(T_n)$ is  jointly firmly nonexpansive 
with respect to $(\gamma_n)$. Hence, we may apply Theorem~\ref{appa} to derive our conclusion.
\end{proof}

The above theorem is a slightly weaker variant (with a completely different proof) of a result due to 
Ba\v{c}\'ak \cite[Theorem 1.4]{Bac13}, since one uses here the stronger assumption $\sum_{n=0}^\infty \gamma_n^2 = \infty$ 
instead of $\sum_{n=0}^\infty \gamma_n = \infty$. We point out that an analysis of Ba\v{c}\'ak's 
original statement from the point of view of proof mining was previously carried out in \cite{LeuSip18,LeuSip18b}.

\subsection{Fixed points of nonexpansive mappings}

We proceed now to give another application. Let $X$ be a complete CAT(0) space  and $T : X \to X$ 
be a nonexpansive mapping. 

For $x \in X$ and $\gamma >0$ we define
$$G_{T,x,\gamma} : X \to X, \quad G_{T,x,\gamma}(y) := \frac1{1+\gamma}x+\frac\gamma{1+\gamma}Ty.$$
It is easy to see that this mapping is Lipschitz with constant $\frac\gamma{1+\gamma} \in (0,1)$. 
Therefore it admits a unique fixed point, which we shall denote by $R_{T,\gamma}x$. We have thus
defined a mapping $R_{T,\gamma} : X\to X$, called the {\it resolvent of order $\gamma$} of $T$, 
which satisfies, for any $x \in X$,
\begin{equation}\label{def-RTgamma}
R_{T,\gamma}x = \frac1{1+\gamma}x + \frac\gamma{1+\gamma}TR_{T,\gamma}x.
\end{equation}
We immediately obtain that $Fix(R_{T,\gamma})=Fix(T)$ for all $\gamma>0$.

\begin{proposition}\label{jfn-ne}
Let $(\gamma_n)$ be a sequence of positive real numbers. Then the family $(R_{T,\gamma_n})$ is 
jointly firmly nonexpansive with respect to $(\gamma_n)$.
\end{proposition}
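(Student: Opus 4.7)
The plan is to imitate the proof of Proposition~\ref{Jgammaf-jfne} for the Moreau--Yosida resolvents. That argument rested on two properties of $J_\gamma$: the reparameterization identity from Proposition~\ref{Jgamaf-prop} stating that $J_{(1-t)\gamma}((1-t)x + tJ_\gamma x) = J_\gamma x$, together with nonexpansivity of each $J_\gamma$. I will establish the corresponding two facts for $R_{T,\gamma}$; the conclusion then follows by the same two-line calculation.

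The key step is the analogue of Jost's reparameterization identity: for $x \in X$, $\gamma > 0$ and $\alpha \in [0,1)$, I claim that
\[
R_{T,(1-\alpha)\gamma}\!\bigl((1-\alpha)x + \alpha R_{T,\gamma}x\bigr) = R_{T,\gamma}x.
\]
Writing $u := R_{T,\gamma}x$ and $\delta := (1-\alpha)\gamma$, the defining relation \eqref{def-RTgamma} places $u$ on the geodesic from $x$ to $Tu$ at parameter $\gamma/(1+\gamma)$. Hence $z := (1-\alpha)x + \alpha u$ lies on that same geodesic, and a direct computation shows that $u$ sits at relative parameter $\delta/(1+\delta)$ on the sub-geodesic from $z$ to $Tu$, i.e.\ $u = \frac{1}{1+\delta}z + \frac{\delta}{1+\delta}Tu$. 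This is precisely the equation characterizing $R_{T,\delta}z$, so by uniqueness of the fixed point of the strict contraction $G_{T,z,\delta}$ we conclude $u = R_{T,\delta}z$.

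Next I would record nonexpansivity of a single resolvent: applying Busemann convexity \eqref{mcv} to the two instances of \eqref{def-RTgamma} defining $R_{T,\delta}x$ and $R_{T,\delta}y$ and using nonexpansivity of $T$ yields, after absorbing the $R_{T,\delta}$-term to the left, $d(R_{T,\delta}x, R_{T,\delta}y) \leq d(x,y)$. With both facts in hand, fix $m,n \in \mathbb{N}$, $x,y \in X$ and $\alpha,\beta \in [0,1]$ with $(1-\alpha)\gamma_n = (1-\beta)\gamma_m =: \delta$. The case $\delta=0$ forces $\alpha=\beta=1$ (since the $\gamma_k$ are positive), and \eqref{def-jfne} is then a trivial equality; otherwise $\alpha,\beta \in [0,1)$, the identity rewrites $R_{T,\gamma_n}x = R_{T,\delta}\bigl((1-\alpha)x + \alpha R_{T,\gamma_n}x\bigr)$ and analogously for $y$, and nonexpansivity of $R_{T,\delta}$ delivers \eqref{def-jfne}. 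The main obstacle is the reparameterization identity itself, since $R_{T,\gamma}$ is defined only implicitly as the fixed point of a contraction rather than variationally; once it is in place, the remaining steps are routine.
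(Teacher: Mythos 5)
Your proof is correct and essentially coincides with the paper's: the paper likewise computes that $R_{T,\gamma_n}x$ lies at parameter $\nu=\delta/(1+\delta)$ on the geodesic from $u:=(1-\alpha)x+\alpha R_{T,\gamma_n}x$ to $TR_{T,\gamma_n}x$ (invoking \cite[Lemma 2.4.(iii)]{AriLeuLop14} for the reparameterization) and then concludes via Busemann convexity, nonexpansivity of $T$, and absorption of the $R$-term. Your only repackaging is to read that representation as the fixed-point equation characterizing $R_{T,\delta}u$, so that the final step becomes nonexpansivity of the single resolvent $R_{T,\delta}$ exactly as in Proposition~\ref{Jgammaf-jfne}; this is the same computation in a slightly more modular dress, and your separate treatment of the degenerate case $\alpha=\beta=1$ is a harmless extra precaution.
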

\begin{proof}
Let $m,n\in\mathbb{N}$, $x,y\in X$ and $\alpha,\beta \in [0,1]$ be such that
$(1-\alpha)\gamma_n = (1-\beta)\gamma_m =: \delta$.
Denote 
\[ u:=(1-\alpha)x+\alpha R_{T,\gamma_n}x, \quad v:=(1-\beta)y+\beta R_{T,\gamma_m}y.\]
Then we have to show that 
\begin{equation}\label{ci2-todo}
d(R_{T,\gamma_n}x,R_{T,\gamma_m}y) \leq d(u,v).
\end{equation}
Using \eqref{def-RTgamma} and the definition of $u$, we may apply \cite[Lemma 2.4.(iii)]{AriLeuLop14} to obtain that
$$R_{T,\gamma_n}x = (1-\nu)u + \nu TR_{T,\gamma_n}x,$$
where
$$\nu := \frac{(1-\alpha)\frac{\gamma_n}{1+\gamma_n}}{1-\alpha\cdot\frac{\gamma_n}{1+\gamma_n}} = \frac{\delta}{1+\delta}.$$
We remark that $\nu \neq 1$. Also note that, while the statement of \cite[Lemma 2.4.(iii)]{AriLeuLop14} requires the four points to be pairwise distinct, 
its conclusion is trivial to show in the case of some of them are equal. \details{
We have that 
$$u=(1-\alpha)x+\alpha R_{T,\gamma_n}x$$
and
$$R_{T,\gamma_n}x = \frac1{1+\gamma_n}x + \frac{\gamma_n}{1+\gamma_n}TR_{T,\gamma_n}x$$
Applying \cite[Lemma 2.4.(iii)]{AriLeuLop14} with 
$y:=u, \lambda:=\alpha, x:=x, z:=R_{T,\gamma_n}x; \alpha:=\frac{\gamma_n}{1+\gamma_n}, w:=TR_{T,\gamma_n}x$,
we get that $R_{T,\gamma_n}x=(1-\nu)u+\nu TR_{T,\gamma_n}x$, where 
\bua 
\nu&=&\frac{(1-\alpha)\frac{\gamma_n}{1+\gamma_n}}{1-\alpha\frac{\gamma_n}{1+\gamma_n}}=
\frac{\delta}{1+\gamma_n-\alpha\gamma_n}=\frac{\delta}{1+\delta}.
\eua
}
We  show  similarly that
$$R_{T,\gamma_m}y = (1-\nu)v + \nu TR_{T,\gamma_m}y.$$
Applying \eqref{mcv} and the nonexpansivity of $T$, we get that 
\begin{align*}
d(R_{T,\gamma_n}x,R_{T,\gamma_m}y) &= d((1-\nu)u + \nu TR_{T,\gamma_n}x,(1-\nu)v + \nu TR_{T,\gamma_m}y) \\
&\leq (1-\nu)d(u,v) + \nu d(TR_{T,\gamma_n}x,TR_{T,\gamma_m}y) \\
&\leq (1-\nu)d(u,v) + \nu d(R_{T,\gamma_n}x,R_{T,\gamma_m}y).
\end{align*}
It follows immediately that \eqref{ci2-todo} holds.
\end{proof}

As an immediate application of Theorem~\ref{appa}, we get 

\begin{theorem}\label{ppa-ne}
Assume that  $Fix(T)\ne\emptyset$ and let $(\gamma_n)$ be a sequence of positive real numbers such that 
$\sum_{n=0}^\infty \gamma_n^2 = \infty$.
For any $x \in X$, define the sequence $(x_n)$ by 
\begin{equation*}
x_0:=x, \quad  x_{n+1}:=R_{T,\gamma_n}x_n \, \text{~for all }n \in \mathbb{N}.
\end{equation*}
Then $(x_n)$ $\Delta$-converges to a fixed point of $T$.
\end{theorem}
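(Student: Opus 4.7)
The plan is to recognize that this theorem is a direct application of the abstract convergence result Theorem~\ref{appa}, with the family of mappings taken to be $T_n := R_{T,\gamma_n}$. All the heavy lifting has already been done in the preceding development, so the proof reduces to verifying the hypotheses of Theorem~\ref{appa} for this specific family.

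First, I would observe that the iteration defined in the theorem, $x_{n+1} = R_{T,\gamma_n} x_n$, is exactly the iteration \eqref{def-main-iteration} applied to the family $(T_n) = (R_{T,\gamma_n})$. Second, I would note the common fixed point condition: since $Fix(R_{T,\gamma}) = Fix(T)$ for every $\gamma > 0$ (as remarked immediately after the definition of $R_{T,\gamma}$ in \eqref{def-RTgamma}), we have
\[
F := \bigcap_{n \in \mathbb{N}} Fix(T_n) = Fix(T),
\]
which is nonempty by assumption. Third, by Proposition~\ref{jfn-ne}, the family $(R_{T,\gamma_n})$ is jointly firmly nonexpansive with respect to $(\gamma_n)$, which is precisely the hypothesis required by Theorem~\ref{appa}. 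The assumption $\sum_{n=0}^\infty \gamma_n^2 = \infty$ is carried over directly.

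Applying Theorem~\ref{appa}, we obtain that $(x_n)$ $\Delta$-converges to a point in $F = Fix(T)$, which is the desired conclusion. Since each step is an immediate invocation of an already-established result, I do not expect any genuine obstacle; the work of this section lies in Proposition~\ref{jfn-ne} (establishing joint firm nonexpansivity of the resolvent family) and in Theorem~\ref{appa} itself, not in the present corollary. The proof will be essentially two or three sentences long, parallel in structure to the proof of Theorem~\ref{ppa-lsc}.
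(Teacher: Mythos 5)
Your proposal matches the paper's own (omitted, "immediate") proof exactly: set $T_n := R_{T,\gamma_n}$, use $Fix(R_{T,\gamma_n}) = Fix(T)$ to get $F = Fix(T) \neq \emptyset$, invoke Proposition~\ref{jfn-ne} for joint firm nonexpansivity, and apply Theorem~\ref{appa}. No gaps; this is precisely how the paper derives the result.
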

\details{For all $n$, put $T_n:=R_{T,\gamma_n}$. We have that $Fix(T_n)=Fix(T)$ for all $n$. Thus,
$F=Fix(T)\ne\emptyset$. By Proposition~\ref{jfn-ne}, 
the family $(T_n)$ is jointly firmly nonexpansive with respect to $(\gamma_n)$, 
we may apply Theorem~\ref{appa} to derive our conclusion.
}

We have therefore obtained a new proof of \cite[Proposition 1.5]{BacRei14}.

\subsection{Zeros of maximally monotone operators}\label{concrete-zeros-maximal}

In the following, $H$ is a Hilbert space with inner product $\langle\cdot,\cdot\rangle$ and $A : H \to 2^H$
is a maximally monotone operator. We denote by $zer(A)$ the set of zeros of $A$. Given $\gamma>0$, the resolvent $J_{\gamma A}$ of 
order $\gamma$ of $A$  is defined by 
$$J_{\gamma A}= (id_H + \gamma A)^{-1}.$$
It is well-known (see, e.g., \cite{BauCom10}) that, for every $\gamma > 0$, 
$J_{\gamma A}:H\to H$ is a single-valued 
firmly nonexpansive mapping satisfying $Fix(J_{\gamma A})=zer(A)$.

\begin{proposition}\label{pj-maxmon}
Let $(\gamma_n)$ be a sequence of positive real numbers. Then the family $(J_{\gamma_n A})$ is 
jointly firmly nonexpansive with respect to $(\gamma_n)$.
\end{proposition}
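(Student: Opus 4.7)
The plan is to mirror exactly the proof strategy already used in Propositions~\ref{Jgammaf-jfne} and~\ref{jfn-ne}: isolate a ``resolvent identity'' that collapses $J_{\gamma_n A}$ and $J_{\gamma_m A}$ into a single resolvent of the common step-size $\delta := (1-\alpha)\gamma_n = (1-\beta)\gamma_m$, and then invoke the nonexpansivity of that common resolvent. Concretely, the key lemma to establish first is the following analogue of Proposition~\ref{Jgamaf-prop}\eqref{Jgamaf-jos}: for every $\gamma > 0$, every $x \in H$, and every $t \in [0,1)$,
\[
J_{(1-t)\gamma A}\bigl((1-t)x + tJ_{\gamma A}x\bigr) = J_{\gamma A}x.
\]

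To verify this identity I would use the defining characterization $u = J_{\gamma A}x$ iff $x - u \in \gamma A(u)$, which is immediate from the definition $J_{\gamma A} = (id_H + \gamma A)^{-1}$. Setting $u := J_{\gamma A}x$ and $z := (1-t)x + tu$, one has $z - u = (1-t)(x-u) \in (1-t)\gamma\, A(u)$, so $u = J_{(1-t)\gamma A}(z)$, as desired. This step is entirely routine.

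With the identity in hand, fix $m, n \in \mathbb{N}$, $x, y \in H$ and $\alpha, \beta \in [0,1]$ with $(1-\alpha)\gamma_n = (1-\beta)\gamma_m =: \delta$. The degenerate case $\alpha = 1$ forces $\beta = 1$ and the inequality \eqref{def-jfne} is then trivial, so we may assume $\alpha, \beta < 1$, i.e.\ $\delta > 0$. Applying the identity twice yields
\[
J_{\gamma_n A}x = J_{\delta A}\bigl((1-\alpha)x + \alpha J_{\gamma_n A}x\bigr), \qquad J_{\gamma_m A}y = J_{\delta A}\bigl((1-\beta)y + \beta J_{\gamma_m A}y\bigr),
\]
and then the firm nonexpansivity (a fortiori, nonexpansivity) of the single resolvent $J_{\delta A}$ gives
\[
\|J_{\gamma_n A}x - J_{\gamma_m A}y\| \leq \|(1-\alpha)x + \alpha J_{\gamma_n A}x - ((1-\beta)y + \beta J_{\gamma_m A}y)\|,
\]
which is exactly \eqref{def-jfne}.

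There is essentially no obstacle here: the only nontrivial ingredient is the resolvent identity, and that is a direct consequence of the defining equation of $J_{\gamma A}$. Alternatively, one could bypass the identity altogether and argue directly from monotonicity of $A$: setting $u := J_{\gamma_n A}x$, $v := J_{\gamma_m A}y$, one has $(x-u)/\gamma_n \in A(u)$ and $(y-v)/\gamma_m \in A(v)$, so monotonicity gives $\langle u - v, (x-u)/\gamma_n - (y-v)/\gamma_m\rangle \geq 0$; multiplying by $\delta$ this becomes $\langle u-v, (p-u) - (q-v)\rangle \geq 0$ with $p := (1-\alpha)x + \alpha u$ and $q := (1-\beta)y + \beta v$, whence $\|u-v\|^2 \leq \langle u-v, p - q\rangle \leq \|u-v\|\|p-q\|$ by Cauchy--Schwarz. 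Either route is short; I would slightly prefer the first because it highlights the parallel with the two preceding sections and makes the derivation uniformly ``three identical proofs'' at the level of joint firm nonexpansivity.
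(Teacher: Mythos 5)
Your proof is correct, but it takes a genuinely different route from the paper's. The paper does not touch the inequality \eqref{def-jfne} directly: it invokes Proposition~\ref{H-jfne=jP2} to reduce the claim to the joint $(P_2)$ condition, and then observes that the monotonicity inequality $\bigl\langle J_{\gamma_n A}x - J_{\gamma_m A}y,\ \frac1{\gamma_n}(x-J_{\gamma_n A}x) - \frac1{\gamma_m}(y-J_{\gamma_m A}y)\bigr\rangle \geq 0$ \emph{is} precisely \eqref{jp2} -- a two-line proof, at the cost of leaning on the earlier equivalence result. Your preferred route instead proves the resolvent identity $J_{(1-t)\gamma A}((1-t)x + tJ_{\gamma A}x) = J_{\gamma A}x$ (correctly, via $z - u = (1-t)(x-u) \in (1-t)\gamma A(u)$, with the degenerate case $\alpha=\beta=1$ rightly set aside) and then quotes nonexpansivity of the common resolvent $J_{\delta A}$; this is exactly the mechanism of Proposition~\ref{Jgammaf-jfne} via Proposition~\ref{Jgamaf-prop}.(\ref{Jgamaf-jos}), and its payoff is uniformity -- all three concrete families are handled by literally the same two-step argument, and no detour through $(P_2)$ is needed. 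Your alternative ``direct'' route is, in substance, the paper's proof with Proposition~\ref{H-jfne=jP2} unfolded inline: the computation showing $\langle u-v, (p-u)-(q-v)\rangle \geq 0$ and then Cauchy--Schwarz is exactly the ``$\Rightarrow$'' direction of that proposition specialized to this family. Either version is complete and compatible with the paper's framework; the paper's choice has the minor advantage of exhibiting the monotonicity inequality as the joint $(P_2)$ condition itself, which is conceptually the point of the abstraction.
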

\begin{proof}
By Proposition~\ref{H-jfne=jP2}, we can prove, equivalently, that the family $(J_{\gamma_n A})$
is jointly $(P_2)$ with respect to $(\gamma_n)$. Let $n,m\in\mathbb{N}$ and $x,y\in H$. It is easy to see that 
$$\frac1{\gamma_n}(x-J_{\gamma_n A}x) \in A(J_{\gamma_n A} x)\quad 
\text{and}
\quad \frac1{\gamma_m}(y-J_{\gamma_m A}y) \in A(J_{\gamma_m A} y).$$
\details{Let $x \in H$. Since $J_{\gamma A}x = (id + \gamma A)^{-1}x$, we have that 
$x \in (id+\gamma A)(J_{\gamma A}x)$. We obtain, successively, that $x \in J_{\gamma A}x + \gamma A(J_{\gamma A}x)$, 
that $x - J_{\gamma A}x \in \gamma A(J_{\gamma A}x)$ and that $\frac1\gamma(x-J_{\gamma A}x) \in A(J_{\gamma A} x)$.
}
By the monotonicity of $A$ we obtain that
$$\left\langle J_{\gamma_n A} x - J_{\gamma_m A} y, \frac1{\gamma_n}(x-J_{\gamma_n A}x) - 
\frac1{\gamma_m}(y-J_{\gamma_m A}y) \right\rangle \geq 0,$$
therefore
$$\frac1{\gamma_m} \langle J_{\gamma_n A} x - J_{\gamma_m A} y,
y - J_{\gamma_m A} y \rangle\leq \frac1{\gamma_n}\langle J_{\gamma_n A} x - J_{\gamma_m A} y, x - 
J_{\gamma_n A} x \rangle. $$
\end{proof}

As a consequence of Theorem~\ref{appa-Hilbert} we derive the following well-known weak 
convergence result (see, e.g., \cite[Theorem 23.41.(i)]{BauCom10}).

\begin{theorem}\label{ppa-maxmon}
Assume that $zer(A)\ne\emptyset$ and let $(\gamma_n)$ be a sequence of positive real numbers such that 
$\sum_{n=0}^\infty \gamma_n^2 = \infty$.  
For any $x \in H$, define the sequence $(x_n)$ by 
\begin{equation}\label{def-xn+1-JgammanAxn}
x_0:=x, \quad  x_{n+1}:=J_{\gamma_n A}x_n \, \text{~for all }n \in \mathbb{N}.
\end{equation}
Then $(x_n)$ converges weakly to a zero of $A$.
\end{theorem}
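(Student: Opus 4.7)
The plan is to recognize that this theorem is an immediate corollary of Theorem~\ref{appa-Hilbert} combined with the results already established about resolvents of maximally monotone operators. In particular, the bulk of the work has been done in Proposition~\ref{pj-maxmon} and in the setup of Section~\ref{sec:gen-PPA}, so essentially all that remains is to verify the hypotheses of Theorem~\ref{appa-Hilbert} for the family $(T_n)$ defined by $T_n := J_{\gamma_n A}$ acting on the Hilbert space $H$.

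First I would set $T_n := J_{\gamma_n A}$ for each $n \in \mathbb{N}$, so that the iteration \eqref{def-xn+1-JgammanAxn} coincides with the abstract iteration \eqref{def-main-iteration} defined by $x_{n+1} = T_n x_n$. Next I would invoke the classical fact (recalled in the paragraph preceding Proposition~\ref{pj-maxmon}) that $Fix(J_{\gamma A}) = zer(A)$ for every $\gamma > 0$, which gives
\[
F \;:=\; \bigcap_{n \in \mathbb{N}} Fix(T_n) \;=\; zer(A) \;\neq\; \emptyset,
\]
by the standing assumption on $A$. Then I would apply Proposition~\ref{pj-maxmon} to conclude that the family $(T_n) = (J_{\gamma_n A})$ is jointly firmly nonexpansive with respect to $(\gamma_n)$.

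At this point every hypothesis of Theorem~\ref{appa-Hilbert} is in place: $H$ is a Hilbert space, $(T_n)$ is a family of self-mappings of $H$ which is jointly firmly nonexpansive with respect to the sequence $(\gamma_n)$ of positive reals satisfying $\sum_{n=0}^\infty \gamma_n^2 = \infty$, the common fixed-point set $F$ is nonempty, and $(x_n)$ is precisely the iteration generated by \eqref{def-main-iteration} starting from $x$. Applying Theorem~\ref{appa-Hilbert} therefore yields weak convergence of $(x_n)$ to some point of $F = zer(A)$, which is exactly the conclusion. There is no real obstacle here; the only conceptual point worth noting is that the transition from $\Delta$-convergence (as in Theorem~\ref{appa}) to weak convergence is automatic since the two notions coincide in Hilbert spaces, a fact already built into Theorem~\ref{appa-Hilbert}.
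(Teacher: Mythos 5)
Your proposal is correct and matches the paper's own (omitted, but implicit) argument exactly: set $T_n := J_{\gamma_n A}$, use $Fix(J_{\gamma_n A}) = zer(A) \neq \emptyset$ and Proposition~\ref{pj-maxmon} to verify the hypotheses, and apply Theorem~\ref{appa-Hilbert}. Nothing is missing.
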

\details{
For all $n$, put $T_n:=J_{\gamma_n A}$. We have that $Fix(T_n)=zer(A)\ne\emptyset$ for every 
$n$ and, by Proposition~\ref{pj-maxmon}, the family $(T_n)_{n \in \mathbb{N}}$ is jointly firmly 
nonexpansive with respect to $(\gamma_n)_{n \in \mathbb{N}}$. Apply Theorem~\ref{appa-Hilbert} to derive our conclusion.
}

\section{Uniformly firmly nonexpansive and uniformly $(P_2)$ mappings}\label{sec:uniform}

As mentioned in the Introduction, if one wants to obtain strong convergence for the proximal point algorithm, one usually imposes a uniformity condition on the object that is being optimized. The aim of this section is to give such a condition in the abstract setting from the previous section. For a single mapping defined on a Hilbert space, this condition was also considered in \cite[Section 3.4]{BarBauMofWan16}, under the name of uniform firm nonexpansivity with a given modulus. We will now generalize this notion to CAT(0) spaces and show how it may be applied for the families of mappings that arise from two of the concrete problems just discussed.

Let $X$ be a CAT(0) space, $T: X \to X$, $C\subseteq X$  be a nonempty subset of $X$ 
and $\varphi : [0,\infty) \to [0,\infty)$ be an increasing function which vanishes only at $0$.

\begin{definition}
We say that $T$ is 
\be 
\item {\it uniformly firmly nonexpansive} on $C$ with modulus $\varphi$ if $T(C) \subseteq C$ 
and,  for all $x,y \in C$ and all $t \in [0,1]$, the following holds:
\begin{equation}\label{def-ufne}
d^2(Tx,Ty) \leq d^2((1-t)x+tTx,(1-t)y+tTy) - 2(1-t)\varphi(d(Tx,Ty)).
\end{equation}
\item {\it uniformly $(P_2)$} on $C$ with modulus $\varphi$ if $T(C) \subseteq C$ and, for 
any $x,y \in C$, 
\begin{equation}\label{def-up2}
2d^2(Tx,Ty) \leq d^2(x,Ty) + d^2(y,Tx) - d^2(x,Tx) - d^2(y,Ty) - 2\varphi(d(Tx,Ty)).
\end{equation}
\ee
\end{definition}
Obviously, if $T$ is uniformly firmly nonexpansive (resp. $(P_2)$) on $C$, then its restriction
$T|_C:C\to C$ is firmly nonexpansive (resp. $(P_2)$). We remark  also that the uniform $(P_2)$ 
condition may be expressed using the quasi-linearization function as follows:
\begin{equation}\label{uP2-BN}
\langle \vv{TxTy},\vv{yTy}\rangle \leq \langle \vv{TxTy} ,\vv{xTx}\rangle - \varphi(d(Tx,Ty)).
\end{equation}
\details{
\bua 
\langle \vv{TxTy},\vv{yTy}\rangle - \langle \vv{TxTy} ,\vv{xTx}\rangle&=& 
\frac12(d^2(Tx,Ty)+d^2(y,Ty)-d^2(Tx,y)-d^2(Ty,Ty))-\\
&& - \frac12(d^2(Tx,Tx)+d^2(Ty,x)-d^2(Tx,x)-d^2(Ty,Tx))\\
&=& \frac12(2d^2(Tx,Ty)+d^2(y,Ty)+d^2(x,Tx)-d^2(Tx,y)-d^2(Ty,x)).
\eua
}

\begin{proposition}
Suppose that $T$ is uniformly firmly nonexpansive on $C$ with modulus $\varphi$. 
Then $T$ is uniformly $(P_2)$ on $C$ with the same modulus $\varphi$.
\end{proposition}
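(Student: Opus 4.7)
The plan is to follow the standard route by which firm nonexpansivity implies $(P_2)$ in CAT(0) spaces (the fact recalled just before Lemma~\ref{P2-im-nonexp}), simply carrying the additional $\varphi$-term through the computation. Fix $x,y\in C$ and $t\in[0,1)$, and write $u:=(1-t)x+tTx$, $v:=(1-t)y+tTy$. The uniform firm nonexpansivity condition~\eqref{def-ufne} reads
\begin{equation*}
d^2(Tx,Ty) \leq d^2(u,v) - 2(1-t)\varphi(d(Tx,Ty)).
\end{equation*}

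The key step is to bound $d^2(u,v)$ from above by repeated application of the CAT(0) inequality~\eqref{def-CAT0}. First, splitting $u$ along the geodesic from $x$ to $Tx$ gives
\begin{equation*}
d^2(u,v) \leq (1-t)\,d^2(x,v) + t\,d^2(Tx,v) - t(1-t)\,d^2(x,Tx).
\end{equation*}
Then splitting $v$ along the geodesic from $y$ to $Ty$ inside each of $d^2(x,v)$ and $d^2(Tx,v)$ and collecting like terms yields, after routine bookkeeping (the coefficients of $d^2(y,Ty)$ combine via $(1-t)+t=1$),
\begin{align*}
d^2(u,v) \leq\ & (1-t)^2 d^2(x,y) + t(1-t)\bigl[d^2(x,Ty) + d^2(y,Tx)\bigr] + t^2 d^2(Tx,Ty) \\
& - t(1-t)\bigl[d^2(x,Tx) + d^2(y,Ty)\bigr].
\end{align*}
Plugging this into the previous display and transferring the $t^2 d^2(Tx,Ty)$ term to the left then produces
\begin{align*}
(1-t^2)\,d^2(Tx,Ty) \leq\ & (1-t)^2 d^2(x,y) + t(1-t)\bigl[d^2(x,Ty)+d^2(y,Tx)\bigr] \\
& - t(1-t)\bigl[d^2(x,Tx)+d^2(y,Ty)\bigr] - 2(1-t)\varphi(d(Tx,Ty)).
\end{align*}

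To finish, I would divide through by $1-t>0$, using $1-t^2=(1-t)(1+t)$, and let $t\to 1^{-}$. The $(1-t)d^2(x,y)$ term disappears, the remaining $t$-factors tend to $1$, and one recovers exactly~\eqref{def-up2}, i.e.\ the uniform $(P_2)$ inequality with the same modulus $\varphi$. The argument is entirely algebraic; the only mild obstacle is the bookkeeping of coefficients when consolidating the two rounds of CAT(0)-expansion, but this reduces to the cancellation already noted.
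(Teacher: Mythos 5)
Your proposal is correct and follows essentially the same route as the paper's own proof: apply the uniform firm nonexpansivity inequality, expand $d^2(u,v)$ via two applications of the CAT(0) inequality \eqref{def-CAT0}, divide by $1-t$, and let $t\to 1^-$. The coefficient bookkeeping you describe checks out and reproduces the paper's intermediate displays exactly.
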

\begin{proof}
Let $x,y \in C$ and $t\in(0,1)$. As in the proof of Proposition~\ref{jfn-jp2}, we apply the uniform firm nonexpansivity condition and \eqref{def-CAT0}  twice to get that
\begin{align*}
d^2(Tx,Ty)  \leq&\ (1-t)^2d^2(x,y) + t(1-t)d^2(Tx,y) + t(1-t)d^2(x,Ty) + t^2d^2(Tx,Ty)
\\ &\  - t(1-t)d^2(x,Tx) - t(1-t)d^2(y,Ty) - 2(1-t)\varphi(d(Tx,Ty)).
\end{align*}
Divide now by $1-t \neq 0$ to obtain that
\begin{align*}
(1+t)d^2(Tx,Ty)  \leq&\ (1-t)d^2(x,y) + td^2(Tx,y) + td^2(x,Ty) 
\\ &\ - td^2(x,Tx) - td^2(y,Ty) - 2\varphi(d(Tx,Ty)),
\end{align*}
By taking $t \to 1$ we get what is needed.
\end{proof}

As in the non-uniform case, for Hilbert spaces, the two notions coincide.

\begin{proposition}\label{H-uP2-ufne}
Assume that $X$ is a Hilbert space and that $T$ is uniformly $(P_2)$ on $C$ with modulus $\varphi$. 
Then $T$ is uniformly firmly nonexpansive on $C$ with the same modulus $\varphi$.
\end{proposition}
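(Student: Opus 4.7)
The plan is to translate everything into the Hilbert space setting, where the quasi-linearization function becomes an inner product, and then expand the right-hand side of the uniform firm nonexpansivity inequality using the parallelogram law, matching it against what uniform $(P_2)$ gives us.

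Concretely, I would first rewrite the uniform $(P_2)$ hypothesis in the inner-product form \eqref{uP2-BN} and, using \eqref{eq-quasi-inner}, derive the equivalent scalar inequality
$$
\langle x - y, Tx - Ty\rangle - \|Tx - Ty\|^2 \;\geq\; \varphi(\|Tx - Ty\|)
$$
for all $x,y \in C$. This is the Hilbert space strengthening of firm nonexpansivity that we will use as the sole analytic input.

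Next, fix $x,y \in C$ and $t \in [0,1]$. If $t = 1$, inequality \eqref{def-ufne} is trivial, so assume $t < 1$. Setting $u := x - y$ and $v := Tx - Ty$, I would expand
$$
\|(1-t)x + tTx - (1-t)y - tTy\|^2 = (1-t)^2\|u\|^2 + 2t(1-t)\langle u,v\rangle + t^2 \|v\|^2,
$$
so the desired inequality \eqref{def-ufne} becomes, after subtracting $\|v\|^2$ and dividing by $1-t > 0$,
$$
(1-t)\bigl(\|u\|^2 - \|v\|^2\bigr) + 2t\bigl(\langle u,v\rangle - \|v\|^2\bigr) \;\geq\; 2\varphi(\|v\|).
$$
Now the key observation is that the two bracketed quantities both dominate $\varphi(\|v\|)$, up to a harmless factor of $2$. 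Indeed, the inequality derived in the first step is exactly $\langle u,v\rangle - \|v\|^2 \geq \varphi(\|v\|)$, while expanding $\|u - v\|^2 \geq 0$ yields $\|u\|^2 - \|v\|^2 \geq 2(\langle u,v\rangle - \|v\|^2) \geq 2\varphi(\|v\|)$. Taking the convex combination with weights $(1-t)$ and $t$ gives the required bound, since $(1-t)\cdot 2 + 2t\cdot 1 = 2$.

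I do not expect a genuine obstacle here: the whole argument is essentially a Hilbert-space algebraic identity, and the main work is just bookkeeping the coefficients correctly so that the modulus $\varphi(\|Tx-Ty\|)$ propagates with the right weight $2(1-t)$. The only subtle point is the degenerate case $t = 1$, which must be treated separately since we divide by $1-t$; this is handled trivially because the inequality reduces to $\|v\|^2 \leq \|v\|^2 - 0 \cdot \varphi(\|v\|) = \|v\|^2$.
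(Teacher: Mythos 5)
Your proof is correct and follows essentially the same route as the paper: both reduce the uniform $(P_2)$ hypothesis via \eqref{uP2-BN} and \eqref{eq-quasi-inner} to the scalar inequality $\langle Tx-Ty,(x-Tx)-(y-Ty)\rangle \geq \varphi(\|Tx-Ty\|)$ and then expand the squared norm of the difference of convex combinations. The only difference is cosmetic: the paper writes that difference as $(Tx-Ty)+(1-t)\bigl((x-Tx)-(y-Ty)\bigr)$, so the expansion yields $\|Tx-Ty\|^2+2(1-t)\varphi(\|Tx-Ty\|)$ in one step with no division by $1-t$ and no separate $t=1$ case, whereas your decomposition in terms of $u=x-y$, $v=Tx-Ty$ requires the extra observation $\|u-v\|^2\geq 0$ and the convex-combination bookkeeping.
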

\begin{proof}
Let $x,y \in C$ and $t \in [0,1]$. By the hypothesis, \eqref{uP2-BN} and \eqref{eq-quasi-inner}, we immediately get that 
\[\langle Tx-Ty, (x-Tx) - (y-Ty) \rangle\geq  \varphi(\|Tx-Ty\|).\]

Consequently,
\begin{align*}
\|((1-t)x+tTx) - ((1-t)y+tTy)\|^2 =&\ \|(Tx-Ty) + (1-t)((x-Tx)-(y-Ty))\|^2\\
=&\ \|Tx-Ty\|^2 + (1-t)^2\|(x-Tx) - (y-Ty)\|^2 \\
&+ 2(1-t)\langle Tx-Ty, (x-Tx) - (y-Ty) \rangle \\
\geq&\ \|Tx-Ty\|^2 + 2(1-t)\varphi(\|Tx-Ty\|).
\end{align*}
\end{proof}

The following properties will be useful in the proof of our main quantitative result, Theorem \ref{conv-rate}.

\begin{lemma}\label{qp}
Let $T$ be uniformly $(P_2)$ on $C$ with modulus $\varphi$.  Then
\[
\varphi(d(Tx,z)) \leq d(x,Tx)d(Tx,z),
\]
for all $x \in C$ and all $z \in C \cap Fix(T)$.
\end{lemma}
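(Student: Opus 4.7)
The plan is to apply the uniform $(P_2)$ condition in its quasi-linearization form \eqref{uP2-BN} directly, taking $y = z \in C \cap Fix(T)$, and then invoke the Cauchy--Schwarz inequality \eqref{CauchySchwartz}. No case analysis or approximation argument should be needed; the proof is essentially a one-line manipulation once the right formulation is chosen.

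Concretely, I would first note that for any $a,b \in X$ and any point $z$, property (iv) of Proposition~\ref{bnchar} applied with $u=v=w=z$ yields $\langle \vv{ab},\vv{zz}\rangle + \langle \vv{ab},\vv{zz}\rangle = \langle \vv{ab},\vv{zz}\rangle$, hence $\langle \vv{ab},\vv{zz}\rangle = 0$. Then I would apply \eqref{uP2-BN} at the pair $(x,z)$: using $Tz = z$, this becomes
\[
\langle \vv{Tx\, z},\vv{z\, z}\rangle \leq \langle \vv{Tx\, z},\vv{x\, Tx}\rangle - \varphi(d(Tx,z)),
\]
and the left-hand side vanishes by the preceding observation. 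Rearranging gives
\[
\varphi(d(Tx,z)) \leq \langle \vv{Tx\, z},\vv{x\, Tx}\rangle.
\]

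Finally, I would apply the Cauchy--Schwarz inequality \eqref{CauchySchwartz} in the CAT(0) space $X$ to bound the right-hand side by $d(Tx,z)\,d(x,Tx)$, which yields the desired estimate. There is really no serious obstacle here; the only subtlety to double-check is that the ``degenerate'' quasi-linearization term $\langle\vv{Tx\,z},\vv{z\,z}\rangle$ vanishes (which follows from Proposition~\ref{bnchar}(iv) as above, or alternatively from the explicit formula \eqref{def-quasilin-fct} since both squared distances involving $Tz=z$ coincide in pairs). If desired, one can also verify the Hilbert-space specialisation as a sanity check: there $\varphi(\|Tx-z\|) \leq \langle Tx-z, x-Tx\rangle \leq \|x-Tx\|\,\|Tx-z\|$ by ordinary Cauchy--Schwarz, confirming that the CAT(0) argument is the direct metric analogue.
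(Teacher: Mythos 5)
Your proof is correct, and it reaches the bound by a mildly different route than the paper. The paper applies the squared-distance form \eqref{def-up2} with $y:=z$ (so that $d(y,Ty)=0$ and $d(Tx,Ty)=d(Tx,z)$), rearranges to get $2\varphi(d(Tx,z))\leq d^2(x,z)-d^2(x,Tx)-d^2(Tx,z)$, and then bounds the right-hand side by $2d(x,Tx)d(Tx,z)$ using the triangle inequality $d(x,z)\leq d(x,Tx)+d(Tx,z)$ and expanding the square. You instead use the quasi-linearization form \eqref{uP2-BN}, note that $\langle\vv{Txz},\vv{zz}\rangle=0$ (your justification via Proposition~\ref{bnchar}(iv), or directly from \eqref{def-quasilin-fct}, is fine), and finish with the Cauchy--Schwarz inequality \eqref{CauchySchwartz}. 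The two arguments are really the same computation in different notation: the instance of \eqref{CauchySchwartz} you invoke, $\langle\vv{Txz},\vv{xTx}\rangle\leq d(Tx,z)\,d(x,Tx)$, unfolds via \eqref{def-quasilin-fct} to precisely the inequality $d^2(x,z)-d^2(x,Tx)-d^2(Tx,z)\leq 2d(x,Tx)d(Tx,z)$ that the paper derives by hand; for this degenerate configuration, where the two ``vectors'' share the endpoint $Tx$, Cauchy--Schwarz reduces to the triangle inequality, so neither proof actually consumes any CAT(0)-specific input beyond the $(P_2)$ hypothesis itself. What your phrasing buys is conceptual economy and consistency with how \eqref{jp2} is exploited elsewhere in the paper (e.g.\ in Propositions~\ref{tntp} and~\ref{cs}); what the paper's phrasing buys is that it stays entirely in the language of distances and makes the elementary nature of the estimate transparent.
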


\begin{proof}
Applying \eqref{def-up2} for $y:=z$, we get that
$$d^2(Tx,z) \leq d^2(x,z) - d^2(x,Tx) - 2\varphi(d(Tx,z)).$$
It follows that
\begin{align*}
2\varphi(d(Tx,z)) &\leq d^2(x,z) - d^2(x,Tx)- d^2(Tx,z)\\
&\leq (d(x,Tx) +d(Tx,z))^2 - d^2(x,Tx)- d^2(Tx,z)\\
&=2d(x,Tx)d(Tx,z).
\end{align*}
\end{proof}

As an immediate consequence, we obtain

\begin{corollary}\label{uP2-most-one-fp}
If $T$ is uniformly $(P_2)$ on $C$ with modulus $\varphi$, the set $C \cap Fix(T)$ is at most a singleton.
\end{corollary}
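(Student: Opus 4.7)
My plan is to derive the corollary directly from Lemma \ref{qp}. Suppose, for contradiction, that $z_1, z_2 \in C \cap Fix(T)$ are two (possibly distinct) fixed points. I apply Lemma \ref{qp} with $x := z_1$ and $z := z_2$; this is legitimate since $z_1 \in C$ and $z_2 \in C \cap Fix(T)$. The lemma then yields
\[
\varphi(d(Tz_1, z_2)) \leq d(z_1, Tz_1)\, d(Tz_1, z_2).
\]

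Since $z_1$ is a fixed point of $T$, we have $Tz_1 = z_1$, so $d(z_1, Tz_1) = 0$ and the right-hand side vanishes. Hence $\varphi(d(z_1, z_2)) \leq 0$. But $\varphi$ takes values in $[0,\infty)$, so in fact $\varphi(d(z_1,z_2)) = 0$. By the standing assumption on the modulus (increasing and vanishing only at $0$), this forces $d(z_1, z_2) = 0$, i.e., $z_1 = z_2$.

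There is essentially no obstacle here — the entire content is packaged in Lemma \ref{qp}, and the corollary just amounts to substituting two fixed points into that inequality and invoking the defining property of the modulus $\varphi$. The argument is the metric analogue of the classical Hilbert-space observation that uniform monotonicity implies uniqueness of zeros.
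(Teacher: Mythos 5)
Your proof is correct and is essentially identical to the paper's: both apply Lemma~\ref{qp} to two fixed points, observe that the right-hand side vanishes because $d(z_1,Tz_1)=0$, and conclude from the fact that $\varphi$ vanishes only at $0$. No further comment is needed.
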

\begin{proof}
Let $x,z\in C \cap Fix(T)$. Applying  Lemma~\ref{qp}, we obtain that $\varphi(d(x,z)) = 0$. 
Since $\varphi$ vanishes only at $0$, we must have that $x=z$.
\end{proof}

We shall now check that the conditions introduced above are indeed satisfied by nontrivial 
particular cases in the concrete instances that we have presented.

\subsection{Uniformly convex functions}

Let $X$ be a complete CAT(0) space and $f: X \to (-\infty, \infty]$ be a proper, convex, lsc
function. We use the notation from Subsection \ref{minim-conv-lsc}.

Let $\psi : [0,\infty) \to [0,\infty)$ be an increasing function which vanishes only at $0$ 
and $C\subseteq X$ be nonempty. Recall that $f$ is said to be {\it uniformly convex} on $C$ 
with modulus $\psi$ if for all $x, y \in C$ 
and all $t \in [0,1]$, the following holds:
\begin{equation}\label{def-uc}
f((1-t)x+ty) \leq (1-t)f(x) + tf(y) - t(1-t)\psi(d(x,y)).
\end{equation}

\begin{lemma}
Assume that $f$ is uniformly convex on $C$ with modulus $\psi$. Let $\gamma >0$ be such that 
$J_{\gamma}(C) \subseteq C$. Then:
\be
\item for all $u,v\in C$, 
\begin{equation}\label{f-uc-lema-1}
d^2(J_{\gamma}u,v) \leq d^2(u,v) - d^2(u,J_{\gamma}u) - 2\gamma(f(J_{\gamma}u) - f(v)) 
- 2\gamma\psi(d(v,J_{\gamma}u)).
\end{equation}
\item for all $x,y \in C$, 
\begin{equation}\label{f-uc-lema-2}
d^2(J_{\gamma}x,J_{\gamma}y) \leq d^2(x,y) - 4\gamma\psi(d(J_{\gamma}x,J_{\gamma}y)).
\end{equation}
\ee
\end{lemma}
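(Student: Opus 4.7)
The plan is to prove (i) by exploiting the variational characterization of the resolvent applied at a point lying on the geodesic from $J_\gamma u$ to $v$, and then to derive (ii) by applying (i) twice and invoking the CAT(0) inequality \eqref{bn}.

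For (i), fix $u,v \in C$ and $t \in (0,1]$, and consider the point $w_t := (1-t)J_\gamma u + tv$. Since $J_\gamma u \in C$ (by the assumption $J_\gamma(C)\subseteq C$) and $v\in C$, I may apply the uniform convexity of $f$ to $w_t$. The definition of $J_\gamma u$ as the unique minimizer of $y\mapsto f(y)+\frac{1}{2\gamma}d^2(u,y)$ gives
\[ f(J_\gamma u) + \frac{1}{2\gamma}d^2(u,J_\gamma u) \leq f(w_t) + \frac{1}{2\gamma}d^2(u,w_t). \]
I would then estimate $f(w_t)$ via \eqref{def-uc} and $d^2(u,w_t)$ via the CAT(0) inequality \eqref{def-CAT0} (with $z = u$ along the geodesic from $J_\gamma u$ to $v$). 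After moving the $(1-t)f(J_\gamma u)$ and $(1-t)d^2(u,J_\gamma u)$ terms to the left, both sides carry a factor of $t$, so dividing by $t>0$ and letting $t\to 0^+$ gives, upon multiplication by $2\gamma$, exactly the inequality in (i).

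For (ii), I would apply (i) once with $(u,v):=(x,J_\gamma y)$ and once with $(u,v):=(y,J_\gamma x)$, and sum the two. The $f$-differences cancel, leaving
\[ 2d^2(J_\gamma x,J_\gamma y) \leq d^2(x,J_\gamma y) + d^2(y,J_\gamma x) - d^2(x,J_\gamma x) - d^2(y,J_\gamma y) - 4\gamma\psi(d(J_\gamma x,J_\gamma y)). \]
Now the CAT(0) characterization \eqref{bn}, applied with the four points $x,y,J_\gamma x, J_\gamma y$, yields
\[ d^2(x,J_\gamma y) + d^2(y,J_\gamma x) \leq d^2(x,J_\gamma x) + d^2(y,J_\gamma y) + d^2(x,y) + d^2(J_\gamma x,J_\gamma y), \]
and substituting this bound cancels the $d^2(x,J_\gamma x)$ and $d^2(y,J_\gamma y)$ terms, producing (ii) after dividing by $2$ and absorbing the surviving $d^2(J_\gamma x,J_\gamma y)$ into the left-hand side.

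The only delicate step is the limit procedure in (i): one must be careful that after dividing by $t$ the remaining factors of $(1-t)$ tend to $1$ to give the stated inequality with the full $-2\gamma\psi(d(v,J_\gamma u))$ term and the full $-d^2(u,J_\gamma u)$ term. Beyond this bookkeeping, the argument is a direct interplay between the minimizer property of $J_\gamma u$, uniform convexity \eqref{def-uc} and the two CAT(0) inequalities \eqref{def-CAT0} and \eqref{bn}.
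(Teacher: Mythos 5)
Your proposal is correct and follows essentially the same route as the paper: part (i) is obtained by testing the minimizing property of $J_\gamma u$ at a point on the geodesic between $J_\gamma u$ and $v$, combining it with \eqref{def-uc} and \eqref{def-CAT0}, and passing to the limit as that point approaches $J_\gamma u$ (the paper writes the point as $(1-t)v+tJ_\gamma u$ and lets $t\to 1$, which is just your parametrization reversed), and part (ii) is derived exactly as in the paper by summing two instances of (i) and applying \eqref{bn}.
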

\begin{proof} 
\be
\item
By the definition of $J_{\gamma}$, we have that for all $p \in X$,
$$f(J_{\gamma}(u)) + \frac1{2\gamma}d^2(u,J_{\gamma}u) \leq f(p) + \frac1{2\gamma}d^2(u,p).$$
Let $t \in (0,1)$ be arbitrary. Note that, by \eqref{def-CAT0}, 
\begin{equation}\label{f-uc-lema-1-u1}
d^2(u,(1-t)v+tJ_{\gamma}u) \leq (1-t)d^2(u,v) + td^2(u,J_{\gamma}u) - t(1-t)d^2(v,J_{\gamma}u).
\end{equation}
Applying the first inequality (multiplied by $\gamma$) for $p: = (1-t)v + tJ_{\gamma}u$, \eqref{f-uc-lema-1-u1}
and the uniform convexity of $f$ on $C$ (since $v, J_{\gamma}u\in C$), we get that
\begin{align*}
\gamma f(J_{\gamma}u) + \frac12d^2(u,J_{\gamma}u) &\leq \gamma((1-t)f(v) + tf(J_{\gamma}u) - 
t(1-t)\psi(d(v,J_{\gamma}u))) \\
&\ \ \ + \frac12((1-t)d^2(u,v) + td^2(u,J_{\gamma}u) - t(1-t)d^2(v,J_{\gamma}u)),
\end{align*}
hence
\begin{align*}
\gamma(1-t)(f(J_\gamma u) - f(v)) \leq &\ \frac12 (1-t) (d^2(u,v) - d^2(u,J_\gamma u) - td^2(v,J_\gamma u) - 2\gamma t \psi(d(v,J_\gamma u))).
\end{align*}
\details{
\begin{align*}
\gamma f(J_{\gamma}u) + \frac12d^2(u,J_{\gamma}u) &\leq \gamma((1-t)f(v) + tf(J_{\gamma}u) - 
t(1-t)\psi(d(v,J_{\gamma}u))) \\
&\ \ \ + \frac12((1-t)d^2(u,v) + td^2(u,J_{\gamma}u) - t(1-t)d^2(v,J_{\gamma}u)),
\end{align*} 
\begin{align*}
\gamma f(J_{\gamma}u)  - \gamma(1-t)f(v) - \gamma tf(J_{\gamma}u)  &\leq - \frac12d^2(u,J_{\gamma}u) -\gamma
t(1-t)\psi(d(v,J_{\gamma}u))) \\
&\ \ \ + \frac12((1-t)d^2(u,v) + td^2(u,J_{\gamma}u) - t(1-t)d^2(v,J_{\gamma}u)),
\end{align*}
\begin{align*}
\gamma(1-t)(f(J_\gamma u) - f(v)) \leq &\ \frac12\bigg((1-t)d^2(u,v) + td^2(u,J_{\gamma}u) - 
t(1-t)d^2(v,J_{\gamma}u)- d^2(u,J_{\gamma}u)\\
&\ - 2\gamma
t(1-t)\psi(d(v,J_{\gamma}u)\bigg)
\end{align*} 
\begin{align*}
\gamma(1-t)(f(J_\gamma u) - f(v)) \leq &\ \frac12\left((1-t)d^2(u,v) - (1-t)td^2(u,J_{\gamma}u) - 
t(1-t)d^2(v,J_{\gamma}u) - 2\gamma
t(1-t)\psi(d(v,J_{\gamma}u)\right)
\end{align*}
\begin{align*}
\gamma(1-t)(f(J_\gamma u) - f(v)) \leq &\ \frac12(1-t)\left(d^2(u,v) - d^2(u,J_{\gamma}u) - 
td^2(v,J_{\gamma}u)- 2\gamma
t\psi(d(v,J_{\gamma}u)\right)
\end{align*}
}
Divide by $1-t \neq 0$ and let $t\to 1$ to  obtain that 
\begin{align*}
\gamma(f(J_\gamma u) - f(v)) \leq &\ \frac12  (d^2(u,v) - d^2(u,J_\gamma u) - d^2(v,J_\gamma u) - 
2\gamma  \psi(d(v,J_\gamma u))), 
\end{align*}
hence \eqref{f-uc-lema-1}.
\item Applying \eqref{f-uc-lema-1} with $u:=x, v:=J_{\gamma}y$ and then with $u:=y, v:=J_{\gamma}x$, we get that 
\begin{align*}
d^2(J_{\gamma}x,J_{\gamma}y) \leq &\ d^2(x,J_{\gamma}y) -d^2(x,J_{\gamma}x) -2\gamma(f(J_{\gamma}x) - 
f(J_{\gamma}y)) - 2\gamma\psi(d(J_{\gamma}x,J_{\gamma}y)),\\
d^2(J_{\gamma}y,J_{\gamma}x) \leq &\ d^2(y,J_{\gamma}x) -d^2(y,J_{\gamma}y) -2\gamma(f(J_{\gamma}y) 
- f(J_{\gamma}x)) - 2\gamma\psi(d(J_{\gamma}y,J_{\gamma}x)).
\end{align*}
Summing up, we obtain
$$2d^2(J_{\gamma}x,J_{\gamma}y) + d^2(x,J_{\gamma}x) + d^2(y,J_{\gamma}y) \leq d^2(x,J_{\gamma}y) + d^2(y,J_{\gamma}x) - 4\gamma\psi(d(J_{\gamma}y,J_{\gamma}x)).$$
By \eqref{bn}, we have that
$$d^2(x,J_{\gamma}y) + d^2(y,J_{\gamma}x) \leq d^2(x,y) + d^2(J_{\gamma}x,J_{\gamma}y) + d^2(x,J_{\gamma}x) + d^2(y,J_{\gamma}y),$$
from where we get our conclusion.
\ee
\end{proof}

\begin{proposition}\label{f-uc-Jgamma-ufne}
Suppose that $f$ is uniformly convex on $C$ with modulus $\psi$. Let $\gamma >0$ be such that 
$J_{\gamma}(C) \subseteq C$.
Then $J_{\gamma}$ is uniformly firmly nonexpansive on $C$ with modulus $2\gamma\psi$.
\end{proposition}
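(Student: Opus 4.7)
The plan is to reduce the statement to the already proved inequality \eqref{f-uc-lema-2} by reparametrizing with the aid of Proposition~\ref{Jgamaf-prop}\eqref{Jgamaf-jos}. Concretely, fix $x,y \in C$ and $t \in [0,1]$; I want to show
$$d^2(J_\gamma x, J_\gamma y) \leq d^2((1-t)x + tJ_\gamma x,(1-t)y + tJ_\gamma y) - 2(1-t)\cdot 2\gamma\,\psi(d(J_\gamma x, J_\gamma y)).$$
At $t=1$ both sides collapse to $d^2(J_\gamma x,J_\gamma y)$, so I may assume $t \in [0,1)$. Set
$$u := (1-t)x + t J_\gamma x,\qquad v := (1-t)y + t J_\gamma y,$$
and apply Proposition~\ref{Jgamaf-prop}\eqref{Jgamaf-jos} to the parameter $\gamma$ at $x$ and $y$ to identify
$$J_{(1-t)\gamma}(u) = J_\gamma x, \qquad J_{(1-t)\gamma}(v) = J_\gamma y.$$

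The second step is to invoke inequality \eqref{f-uc-lema-2} with $\gamma$ replaced by $(1-t)\gamma$ and $x,y$ replaced by $u,v$. This produces
$$d^2\bigl(J_{(1-t)\gamma}u, J_{(1-t)\gamma}v\bigr) \leq d^2(u,v) - 4(1-t)\gamma\,\psi\bigl(d(J_{(1-t)\gamma}u, J_{(1-t)\gamma}v)\bigr),$$
and substituting the identifications above turns this into exactly the desired inequality, since $4(1-t)\gamma = 2(1-t)\cdot 2\gamma$. Thus the modulus of uniform firm nonexpansivity of $J_\gamma$ on $C$ is $2\gamma\psi$.

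The one delicate point (and the main obstacle to a purely symbolic application of the previous lemma) is that \eqref{f-uc-lema-2} was stated under the hypothesis $J_\gamma(C) \subseteq C$ applied to the same parameter as the resolvent, whereas here the parameter is $(1-t)\gamma$ and the input points $u,v$ may not themselves lie in $C$. To address this, I would inspect the proof of \eqref{f-uc-lema-2}: the role of that hypothesis is only to ensure that the images of the resolvent, at which the uniform convexity of $f$ is invoked, belong to $C$. In our re-application these images are $J_{(1-t)\gamma}u = J_\gamma x$ and $J_{(1-t)\gamma}v = J_\gamma y$, and both lie in $C$ by the standing assumption $J_\gamma(C)\subseteq C$. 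Hence the derivation of \eqref{f-uc-lema-2} carries over verbatim in our setting. (Alternatively, one may simply recopy the two applications of \eqref{f-uc-lema-1} — for the pairs $(u,J_{(1-t)\gamma}v)$ and $(v,J_{(1-t)\gamma}u)$ — followed by inequality \eqref{bn}, without appealing to the lemma as a black box.)
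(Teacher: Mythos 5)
Your proof is correct and follows essentially the same route as the paper: define $u,v$, identify $J_{(1-t)\gamma}(u)=J_\gamma x$ and $J_{(1-t)\gamma}(v)=J_\gamma y$ via Proposition~\ref{Jgamaf-prop}.\eqref{Jgamaf-jos}, and apply \eqref{f-uc-lema-2} with parameter $(1-t)\gamma$ in place of $\gamma$. The extra care you take with the hypotheses of \eqref{f-uc-lema-2} --- observing that its derivation only needs the resolvent images, here $J_\gamma x$ and $J_\gamma y$, to lie in $C$, rather than $u,v\in C$ and $J_{(1-t)\gamma}(C)\subseteq C$ literally --- addresses a point the paper's proof silently glosses over, and is a worthwhile addition.
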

\begin{proof}
Let $x,y \in C$ and $t \in [0,1]$. Denote $u:=(1-t)x+tJ_{\gamma}x$ and 
$v:=(1-t)y + tJ_{\gamma}y$. 

 By Proposition~\ref{Jgamaf-prop}.\eqref{Jgamaf-jos}, we have that $J_{(1-t)\gamma}(u)=J_\gamma x$ and 
$J_{(1-t)\gamma}(v)=J_\gamma y$.  We get that 
\begin{align*}
d^2(J_{\gamma}x,J_{\gamma}y) =&\ d^2(J_{(1-t)\gamma}(u), J_{(1-t)\gamma}(v)) \\
\leq&\  d^2(u,v) - 4(1-t)\gamma\psi(d(J_{(1-t)\gamma}(u), J_{(1-t)\gamma}(v)) \quad\text{by \eqref{f-uc-lema-2}}\\
=&\  d^2(u,v) - 4(1-t)\gamma\psi(d(J_\gamma x, J_\gamma y).
\end{align*}
\end{proof}

\subsection{Uniformly monotone operators}

Fix now a Hilbert space $H$ and $C\subseteq H$ a nonempty subset. Let $A : H \to 2^H$ be a multi-valued operator and 
$\varphi : [0,\infty) \to [0,\infty)$ be an increasing function which vanishes only at $0$.

Then $A$ is said to be {\it uniformly monotone} on $C$ with modulus $\varphi$ (see, e.g. \cite[Definition 22.1]{BauCom10}) if for all $x,y \in C$ 
and $u,v \in H$ with $u \in A(x)$ and $v \in A(y)$ we have that
$$\langle x-y,u-v \rangle \geq \varphi(\|x-y\|).$$

\begin{proposition}\label{unif-mon-ufne}
Assume that $A$ is a maximally monotone operator which is uniformly monotone 
on $C$ with modulus $\varphi$. Let $\gamma >0$ be such that 
$J_{\gamma A}(C) \subseteq C$. Then $J_{\gamma A}$ is uniformly firmly 
nonexpansive on $C$ with modulus $\gamma\varphi$.
\end{proposition}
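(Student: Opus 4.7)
The plan is to reduce the claim to a statement about the uniform $(P_2)$ condition and then read off the desired inequality directly from uniform monotonicity of $A$, essentially mimicking the proof of Proposition~\ref{pj-maxmon} but with the quantitative lower bound that uniform monotonicity provides.

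More concretely, by Proposition~\ref{H-uP2-ufne} it suffices to show that $J_{\gamma A}$ is uniformly $(P_2)$ on $C$ with modulus $\gamma\varphi$. Using the equivalent form \eqref{uP2-BN} of the uniform $(P_2)$ condition, together with the Hilbert-space identity \eqref{eq-quasi-inner}, what one needs to establish is exactly
\[
\left\langle J_{\gamma A}x - J_{\gamma A}y,\,(x - J_{\gamma A}x) - (y - J_{\gamma A}y)\right\rangle \;\geq\; \gamma\,\varphi(\|J_{\gamma A}x - J_{\gamma A}y\|)
\]
for all $x,y\in C$.

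To obtain this, I would invoke the standard identity already recalled in the proof of Proposition~\ref{pj-maxmon}, namely that $\frac{1}{\gamma}(x - J_{\gamma A}x)\in A(J_{\gamma A}x)$ and $\frac{1}{\gamma}(y - J_{\gamma A}y)\in A(J_{\gamma A}y)$. Since $J_{\gamma A}(C)\subseteq C$ by hypothesis, both $J_{\gamma A}x$ and $J_{\gamma A}y$ lie in $C$, so the uniform monotonicity of $A$ applies to these two points and the indicated elements of their images, yielding
\[
\left\langle J_{\gamma A}x - J_{\gamma A}y,\,\tfrac{1}{\gamma}(x - J_{\gamma A}x) - \tfrac{1}{\gamma}(y - J_{\gamma A}y)\right\rangle \;\geq\; \varphi(\|J_{\gamma A}x - J_{\gamma A}y\|).
\]
Multiplying through by $\gamma > 0$ gives precisely the inequality sought above, and Proposition~\ref{H-uP2-ufne} then converts the uniform $(P_2)$ property into uniform firm nonexpansivity with the same modulus $\gamma\varphi$.

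There is no real obstacle here: all the substantive work is already packaged in Proposition~\ref{H-uP2-ufne} and in the resolvent identity $\frac{1}{\gamma}(x - J_{\gamma A}x)\in A(J_{\gamma A}x)$; the only new input, compared with Proposition~\ref{pj-maxmon}, is the replacement of plain monotonicity by the quantitative lower bound furnished by the modulus $\varphi$, and the assumption $J_{\gamma A}(C)\subseteq C$ is exactly what allows us to apply that bound to the pair $(J_{\gamma A}x, J_{\gamma A}y)$.
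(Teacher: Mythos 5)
Your proposal is correct and follows essentially the same route as the paper: both reduce the claim via Proposition~\ref{H-uP2-ufne} to the uniform $(P_2)$ condition, apply uniform monotonicity to the pair $J_{\gamma A}x, J_{\gamma A}y$ (which lie in $C$ by the hypothesis $J_{\gamma A}(C)\subseteq C$) together with the resolvent identity $\frac1\gamma(x-J_{\gamma A}x)\in A(J_{\gamma A}x)$, and multiply through by $\gamma$. Your write-up is in fact slightly more explicit than the paper's, which compresses these steps into a reference to the proof of Proposition~\ref{pj-maxmon}.
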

\begin{proof}
Let $x,y \in C$. As in the proof of Proposition \ref{pj-maxmon}, we get that 
$$\langle J_{\gamma A}x - J_{\gamma A}y, x- J_{\gamma A}x \rangle \geq \langle J_{\gamma A}x-J_{\gamma A}y , y - J_{\gamma A}y \rangle + 
\gamma\varphi(\|J_{\gamma A}x-J_{\gamma A}y\|).$$
Thus, $J_{\gamma A}$ is uniformly $(P_2)$ on $C$ with modulus $\gamma\varphi$. Apply now Proposition~\ref{H-uP2-ufne}.
\end{proof}

\section{A rate of convergence for the uniform case}\label{sec:quantitative}

We shall now show that in the presence of the uniformity constraint described in the previous section, 
one indeed gets strong convergence of the proximal point algorithm in its most abstract form, 
given by Theorem~\ref{gppa}. 
Moreover, as announced in the Introduction, we use the tools of proof mining to derive that 
result from a stronger one which is highly uniform and also quantitative -- i.e. also yields 
a {\it rate of convergence} for the sequence. 

Let us recall that if $(a_n)_{n \in \mathbb{N}}$ is a convergent sequence in a metric space $(X,d)$ with 
$\lim_{n\to\infty}a_n=a$, then a {\it rate of convergence} of $(a_n)$ is a function $\Phi : \mathbb{N} \to \mathbb{N}$ 
such that for all $k \in \mathbb{N}$ and all $n \geq \Phi(k)$,
$$d(a_n,a) \leq \frac1{k+1}.$$
Another needed quantitative notion will be that of a {\it rate of divergence} for a given diverging series 
$\sum_{n=0}^\infty b_n = \infty$,  which is a function $\theta : \mathbb{N} \to \mathbb{N}$ such that for all 
$K \in \mathbb{N}$ we have that
$\sum_{n=0}^{\theta(K)} b_n \geq K$. 

\mbox{}

In this section, $X$ is a complete CAT(0) space and $T_n: X \to X$ for every $n\in \N$. We assume that 
the family $(T_n)$ has common fixed points and set $$F:=\bigcap_{n \in \mathbb{N}} Fix(T_n)\neq\emptyset.$$
Furthermore, $\varphi:[0,\infty) \to [0,\infty)$ is an increasing function which vanishes only at $0$ and 
$(\gamma_n)$ is a a sequence in $(0,\infty)$ such that $\sum_{n=0}^\infty \gamma_n^2 = \infty$ 
with rate of divergence $\theta$.

We can state now the main result of this section.  

\begin{theorem}\label{conv-rate}
Let $b\in\N, p\in F$ and $C$ be the closed ball of center $p$ and radius $b$. 
Assume that, for all $n\in\N$, $T_n$ is uniformly $(P_2)$ on $C$ with modulus $\gamma_n\varphi$.

For every $x\in C$, let $(x_n)$ be defined by
\beq
x_0:=x, \quad x_{n+1}:=T_nx_n \text{~for all~}n\in\mathbb{N}. \label{def-ppa-final}
\eeq
Suppose that  $(C2)$ holds, that is, the sequence $\left(\frac{d(x_n,x_{n+1})}{\gamma_n}\right)$ is nonincreasing. 

Then $C\cap F=\{p\}$ and $(x_n)$ converges strongly to $p$ with rate of convergence 
$\Psi_{b,\theta,\varphi}$,  given by
\begin{equation}\label{def-Psi-b-theta-vp}
\Psi_{b,\theta,\varphi}(k):=\Sigma_{b,\theta}\left(\left\lceil \frac{2b}{\varphi\left(\frac1{k+1}\right)} 
\right\rceil \right) + 1,
\end{equation}
with $\Sigma_{b,\theta}(k):=\theta(b^2(k+1)^2)$.
\end{theorem}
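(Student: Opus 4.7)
The plan is to proceed in three linked stages: reduce $C\cap F$ to the singleton $\{p\}$ and localize the iteration inside $C$; quantify the decay of $d(x_n,x_{n+1})/\gamma_n$ in terms of the rate of divergence $\theta$; and then invoke the uniform $(P_2)$ property of each $T_n$ to convert that quantitative information into a rate of convergence of $(x_n)$ to $p$.

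First I would observe that uniform $(P_2)$ on $C$ entails plain $(P_2)$, so Corollary~\ref{uP2-most-one-fp} forces $C\cap Fix(T_n)=\{p\}$ for every $n$, whence $C\cap F=\{p\}$. Lemma~\ref{2-lem-1} then gives Fej\'er monotonicity of $(x_n)$ with respect to $F$, so $d(x_n,p)\le d(x_0,p)\le b$ and in particular $(x_n)\subseteq C$; this matters because every uniform hypothesis only applies on $C$. Telescoping the Fej\'er inequality also yields $\sum_{k=0}^n d^2(x_k,x_{k+1})\le b^2$, and combined with $(C2)$ and the rate of divergence $\theta$ this lets me make the proof of Lemma~\ref{2-lem-2} quantitative. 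Namely, for each $k'\in\mathbb{N}$, if $d(x_k,x_{k+1})/\gamma_k>1/(k'+1)$ held for every $k\le\Sigma_{b,\theta}(k')=\theta(b^2(k'+1)^2)$, then $b^2\ge\sum d^2(x_k,x_{k+1})>(k'+1)^{-2}\sum\gamma_k^2\ge b^2$, a contradiction; hence some $M\le\Sigma_{b,\theta}(k')$ satisfies the bound, and $(C2)$ propagates it to every $n\ge\Sigma_{b,\theta}(k')$.

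To conclude, I would apply Lemma~\ref{qp} to $T_n$ on $C$ with modulus $\gamma_n\varphi$, taking $z=p$ and $x=x_n$, and use $d(x_{n+1},p)\le b$ to obtain
\[\varphi(d(x_{n+1},p))\le\frac{b\,d(x_n,x_{n+1})}{\gamma_n}.\]
Choosing $k':=\lceil 2b/\varphi(1/(k+1))\rceil$ one has $k'+1>2b/\varphi(1/(k+1))$, so $b/(k'+1)<\varphi(1/(k+1))/2<\varphi(1/(k+1))$; combined with the previous stage this gives $\varphi(d(x_{n+1},p))<\varphi(1/(k+1))$ for all $n\ge\Sigma_{b,\theta}(k')$. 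Since $\varphi$ is increasing and vanishes only at $0$, this forces $d(x_{n+1},p)\le 1/(k+1)$, and re-indexing $m=n+1$ accounts for the ``$+1$'' in the definition of $\Psi_{b,\theta,\varphi}$. The principal obstacle is the bookkeeping in the quantitative asymptotic regularity step: the factor $2b$ rather than $b$ inside the ceiling provides exactly the slack needed to turn the weak bound $d(x_n,x_{n+1})/\gamma_n\le 1/(k'+1)$ into the strict inequality required to apply the monotonicity of $\varphi$, and one must ensure that the argument fed to $\theta$ depends only on $b$, $k$ and $\varphi$, never on the iterates. Qualitative strong convergence of $(x_n)$ to $p$ is then an immediate byproduct.
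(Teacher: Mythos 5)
Your proposal is correct and follows essentially the same route as the paper's proof: Corollary~\ref{uP2-most-one-fp} for uniqueness, Fej\'er monotonicity from Lemma~\ref{2-lem-1} to keep the iterates in $C$ and bound $\sum d^2(x_k,x_{k+1})$ by $b^2$, the same contradiction argument (quantifying Lemma~\ref{2-lem-2} via $\theta$ and propagating with $(C2)$) to show $\Sigma_{b,\theta}$ is a rate of convergence for $d(x_n,x_{n+1})/\gamma_n$, and finally Lemma~\ref{qp} with the factor $2b$ giving exactly the slack to conclude via the monotonicity of $\varphi$. The only cosmetic difference is that you derive the strict inequality $\varphi(d(x_{n+1},p))<\varphi(1/(k+1))$ directly, whereas the paper reaches the contradiction through the intermediate bound $\tfrac12\varphi(1/(k+1))$.
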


Before proving the theorem, let us give some consequences.

\begin{proposition}\label{ppa-maxmon-quant}
Assume that $H$ is a Hilbert space and $A:H \to 2^H$ is a maximally monotone operator with $zer(A)\ne\emptyset$.
Let $b\in\N, p\in zer(A)$ and $C$ be the closed ball of center $p$ and radius $b$.
Suppose that $A$ is uniformly monotone  on $C$ with modulus $\varphi$.  
For any $x \in C$, let $(x_n)$ be defined by \eqref{def-xn+1-JgammanAxn}. 

Then $p$ is the unique zero of $A$ in $C$ and $(x_n)$ converges strongly  to $p$ with rate of convergence 
$\Psi_{b,\theta,\varphi}$, given by \eqref{def-Psi-b-theta-vp}.
\end{proposition}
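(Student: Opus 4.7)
The plan is to set $T_n := J_{\gamma_n A}$ for each $n \in \mathbb{N}$ and verify that all the hypotheses of Theorem~\ref{conv-rate} are met, so that the conclusion follows immediately from that theorem. The main ingredients are already scattered across the previous two sections, and the task is only to assemble them.

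First I would check that the iteration \eqref{def-xn+1-JgammanAxn} fits the setup of Theorem~\ref{conv-rate}, i.e.\ that $F := \bigcap_n \mathrm{Fix}(T_n) \ni p$ (immediate since $\mathrm{Fix}(J_{\gamma_n A}) = zer(A)$) and that $T_n(C) \subseteq C$. The latter follows because each resolvent is (firmly) nonexpansive and fixes $p$, so for $y \in C$ we have $\|T_n y - p\| = \|T_n y - T_n p\| \le \|y - p\| \le b$.

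Next, I would invoke Proposition~\ref{unif-mon-ufne} to conclude that, for every $n$, $T_n$ is uniformly firmly nonexpansive on $C$ with modulus $\gamma_n\varphi$; by the Hilbert space case of Proposition~\ref{H-uP2-ufne} (or more directly by the proposition preceding it, which shows that uniform firm nonexpansivity entails uniform $(P_2)$), $T_n$ is then uniformly $(P_2)$ on $C$ with modulus $\gamma_n\varphi$, exactly as required by Theorem~\ref{conv-rate}. To establish condition $(C2)$, I would apply Proposition~\ref{pj-maxmon} (the family $(J_{\gamma_n A})$ is jointly firmly nonexpansive with respect to $(\gamma_n)$), then Proposition~\ref{jfn-jp2} (so the family is jointly $(P_2)$ with respect to $(\gamma_n)$), and finally Proposition~\ref{cs}, which yields that the sequence $(d(x_n,x_{n+1})/\gamma_n)$ is nonincreasing.

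With all hypotheses verified, Theorem~\ref{conv-rate} directly gives $C \cap F = \{p\}$, which translates to $p$ being the unique zero of $A$ in $C$ (since $F = zer(A)$), and yields that $(x_n)$ converges strongly to $p$ with rate of convergence $\Psi_{b,\theta,\varphi}$ defined in \eqref{def-Psi-b-theta-vp}. I do not foresee a real obstacle here: the proof is a routine application of Theorem~\ref{conv-rate}, with the only mildly delicate point being the explicit verification that $T_n(C) \subseteq C$, needed so that the uniform $(P_2)$ modulus is meaningful on $C$.
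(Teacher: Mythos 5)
Your proposal is correct and follows essentially the same route as the paper's own proof: set $T_n := J_{\gamma_n A}$, note $Fix(J_{\gamma_n A}) = zer(A)$ and $J_{\gamma_n A}(C)\subseteq C$ by nonexpansivity, invoke Proposition~\ref{unif-mon-ufne} for the uniform modulus, obtain $(C2)$ from Propositions~\ref{pj-maxmon} and \ref{cs}, and apply Theorem~\ref{conv-rate}. Your explicit mention of the intermediate steps (uniform firm nonexpansivity implying uniform $(P_2)$, and Proposition~\ref{jfn-jp2} bridging joint firm nonexpansivity to the joint $(P_2)$ hypothesis of Proposition~\ref{cs}) only makes explicit what the paper leaves implicit.
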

\begin{proof}  
We use the notation from Subsection~\ref{concrete-zeros-maximal}. Since, for every $n\in\N$, 
$Fix(J_{\gamma_nA})=zer(A)\ne\emptyset$ and $J_{\gamma_nA}$ is nonexpansive, it is obvious that $J_{\gamma_nA}(C)\subseteq C$. Thus,
by Proposition~\ref{unif-mon-ufne}, every $J_{\gamma_nA}$ is uniformly firmly 
nonexpansive on $C$ with modulus $\gamma_n\varphi$. Furthermore, $(C2)$ is satisfied, by Propositions~\ref{pj-maxmon} 
and \ref{cs}. An application of Theorem~\ref{conv-rate} for the family $(J_{\gamma_nA})$ yields the 
result.
\end{proof}

The above proposition is a quantitative uniform version of Theorem~\ref{ppa-maxmon}. If we forget about the quantitative 
features, we get immediately the following well-known result (see, e.g., \cite[Theorem 23.41.(ii)]{BauCom10}).

\begin{corollary}
Assume that $H$ is a Hilbert space and $A:H \to 2^H$ is a maximally monotone operator with $zer(A)\ne\emptyset$.
Let $(\gamma_n)$ be a sequence of positive real numbers such that $\sum_{n=0}^\infty \gamma_n^2 = \infty$, $x\in X$ 
and $(x_n)$ be defined by \eqref{def-xn+1-JgammanAxn}.
Suppose that $A$ is uniformly monotone on every bounded subset of $H$.

Then $(x_n)$ converges strongly to the unique zero of $A$.
\end{corollary}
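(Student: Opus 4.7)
The plan is to deduce this corollary as a direct qualitative consequence of the quantitative Proposition~\ref{ppa-maxmon-quant}. The key observation is that although the corollary makes no reference to a particular ball, the iteration automatically remains in a ball around any fixed zero of $A$, so the local uniformity hypothesis is enough.

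First, pick any $p\in zer(A)$, which exists by assumption. Since each $J_{\gamma_n A}$ is firmly nonexpansive in the Hilbert sense, it satisfies property $(P_2)$ and fixes $p$, so Lemma~\ref{2-lem-1} (applied with $T_n:=J_{\gamma_n A}$ and $F:=zer(A)$) yields Fejér monotonicity of $(x_n)$ with respect to $\{p\}$. Consequently $d(x_n,p)\leq d(x,p)$ for all $n$, and if we set $b:=\lceil d(x,p)\rceil$ and let $C$ be the closed ball of center $p$ and radius $b$, then $(x_n)\subseteq C$. By hypothesis, $A$ is uniformly monotone on $C$ with some modulus $\varphi$, and since $\sum_{n=0}^\infty \gamma_n^2=\infty$, a rate of divergence $\theta$ of this series exists.

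Next I would invoke Proposition~\ref{ppa-maxmon-quant} with these choices of $b$, $p$, $\varphi$, and $\theta$. It delivers two things simultaneously: (a) $p$ is the unique element of $zer(A)\cap C$, and (b) the sequence $(x_n)$ converges strongly to $p$, with explicit rate $\Psi_{b,\theta,\varphi}$. Part (b) is exactly the desired strong convergence assertion of the corollary.

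For the global uniqueness of the zero stated in the corollary, suppose $q\in zer(A)$ is another zero. Enlarging $b$ if necessary so that $b\geq d(p,q)$, the point $q$ also lies in $C$, and part (a) forces $q=p$. No step here is technically delicate; the main conceptual point — and the only place where a bit of care is needed — is recognizing that Fejér monotonicity of the iteration with respect to any single zero lets us localize the uniform monotonicity hypothesis to a single ball, so that the quantitative proposition (which is stated for a fixed ball) suffices to yield the ball-free qualitative statement.
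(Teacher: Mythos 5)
Your proposal is correct and follows essentially the same route as the paper: fix $p\in zer(A)$, choose $b\geq d(x,p)$ and the corresponding closed ball $C$, use the hypothesis that $A$ is uniformly monotone on the bounded set $C$, and apply Proposition~\ref{ppa-maxmon-quant}. The only (harmless) differences are that you spell out the Fej\'er-monotonicity localization explicitly and derive global uniqueness of the zero by enlarging the ball, whereas the paper simply cites uniqueness as well known.
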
 
\details{Let $p\in zer(A)$, $b\in \N$ be such that $d(x,p)\leq b$ and $C$ be the closed ball of center $p$ and radius $b$.
Apply the fact that $A$ is uniformly monotone on $C$ to get that $(x_n)$ converges strongly to $p$. 
It is well-known that $A$ has a unique zero.
}

The following result is a quantitative uniform version of Theorem~\ref{ppa-lsc}.

\begin{proposition}
Assume that $X$ is a complete CAT(0) space and $f: X \to (-\infty, \infty]$ is a convex, proper,
lsc function that attains its minimum.
Let $b\in\N, p\in Argmin(f)$ and $C$ be the closed ball of center $p$ and radius $b$.
Suppose that $f$ is  uniformly convex on $C$ with modulus $\psi$. For any $x \in C$, let $(x_n)$ 
be defined by \eqref{def-lsc-ppa}. 

Then $p$ is the unique minimizer of $f$ in $C$ and $(x_n)$ converges strongly to $p$ with rate of convergence 
$\Omega_{b,\theta,\psi}:=\Psi_{b,\theta,2\psi}$. 
\end{proposition}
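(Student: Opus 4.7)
The plan is to derive the result as a direct consequence of Theorem~\ref{conv-rate} applied to the family $(J_{\gamma_n})$ of Moreau--Yosida resolvents, with $\varphi$ instantiated as $2\psi$. The verification is a matter of checking each hypothesis of Theorem~\ref{conv-rate}.

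First, I would argue that the closed ball $C$ is invariant under every $J_{\gamma_n}$. Since $p\in Argmin(f)$, Proposition~\ref{Jgamaf-prop}.\eqref{Fix-Jgamma_f=argmin-f} gives $p\in Fix(J_{\gamma_n})$, and by Proposition~\ref{Jgamaf-prop}.\eqref{Jgamaf-ne}, $J_{\gamma_n}$ is nonexpansive, so $d(J_{\gamma_n}x,p)=d(J_{\gamma_n}x,J_{\gamma_n}p)\leq d(x,p)\leq b$ for all $x\in C$; hence $J_{\gamma_n}(C)\subseteq C$. This is also what guarantees that the iteration~\eqref{def-lsc-ppa} stays in $C$, so the hypotheses of Theorem~\ref{conv-rate} on the starting point and the iterates are satisfied.

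Next, I would invoke Proposition~\ref{f-uc-Jgamma-ufne}: uniform convexity of $f$ on $C$ with modulus $\psi$ together with the invariance $J_{\gamma_n}(C)\subseteq C$ yields that each $J_{\gamma_n}$ is uniformly firmly nonexpansive on $C$ with modulus $2\gamma_n\psi$. In particular, each $J_{\gamma_n}$ is uniformly $(P_2)$ on $C$ with modulus $\gamma_n\cdot(2\psi)$, which is exactly the form required by Theorem~\ref{conv-rate} with $\varphi:=2\psi$. For the condition $(C2)$, I would use that $(J_{\gamma_n})$ is jointly firmly nonexpansive with respect to $(\gamma_n)$ by Proposition~\ref{Jgammaf-jfne}, hence jointly $(P_2)$ by Proposition~\ref{jfn-jp2}, and then Proposition~\ref{cs} gives that $\left(d(x_n,x_{n+1})/\gamma_n\right)$ is nonincreasing.

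With all hypotheses in place, Theorem~\ref{conv-rate} yields that $C\cap\bigcap_{n}Fix(J_{\gamma_n}) = C\cap Argmin(f)=\{p\}$ (so $p$ is the unique minimizer of $f$ in $C$) and that $(x_n)$ converges strongly to $p$ with rate $\Psi_{b,\theta,2\psi}$, which is precisely $\Omega_{b,\theta,\psi}$. There is no real obstacle here: the work is entirely in assembling the already-established pieces; the only point requiring a moment's attention is the bookkeeping on the modulus (namely that the $2\gamma_n\psi$ produced by Proposition~\ref{f-uc-Jgamma-ufne} is read as $\gamma_n\cdot(2\psi)$ so as to match the template of Theorem~\ref{conv-rate}).
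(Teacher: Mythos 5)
Your proposal is correct and follows essentially the same route as the paper: establish $J_{\gamma_n}(C)\subseteq C$ via nonexpansivity and $p\in Fix(J_{\gamma_n})$, obtain uniform firm nonexpansivity (hence the uniform $(P_2)$ condition) with modulus $2\gamma_n\psi$ from Proposition~\ref{f-uc-Jgamma-ufne}, get $(C2)$ from Propositions~\ref{Jgammaf-jfne} and \ref{cs}, and apply Theorem~\ref{conv-rate} with $\varphi:=2\psi$. The bookkeeping on the modulus is exactly as the paper intends.
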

\begin{proof}  
By Proposition~\ref{Jgamaf-prop}, $Fix(J_{\gamma_n})=Argmin(f)\ne\emptyset$ and $J_{\gamma_n}$ is nonexpansive, hence 
$J_{\gamma_n}(C)\subseteq C$ for all $n$. Use now Proposition~\ref{f-uc-Jgamma-ufne} to get that 
every $J_{\gamma_n}$ is uniformly firmly nonexpansive on $C$ with modulus $2\gamma_n\psi$. Since $(C2)$ is satisfied
(by Propositions~\ref{Jgammaf-jfne} and \ref{cs}), we can apply  Theorem~\ref{conv-rate} for the family $(J_{\gamma_n})$ 
to get the result. 
\end{proof}

\subsection{Proof of Theorem~\ref{conv-rate}}

Apply the fact that $C\cap F\ne\emptyset$ and Corollary~\ref{uP2-most-one-fp} to conclude 
that  $C\cap F=\{p\}$. \details{By hypothesis, $p\in C\cap F$. Let $n\in \N$ be arbitrary. Then  
we have that $p\in C\cap Fix(T_n)$. Since $T_n$ is uniformly $P_2$ on $C$, it follows, by Corollary~\ref{uP2-most-one-fp}, that 
$C\cap Fix(T_n)=\{p\}$. This hold for all $n\in\N$, so $C\cap F=\{p\}$.} 
Since, by Lemma~\ref{2-lem-1}, $(x_n)$ is Fej\'er monotone 
with respect to $F$, we have that $d(x_n,p)\leq b$ for all $n\in\N$. 

\noindent {\bf Claim:} $\Sigma_{b,\theta}$ is a rate of convergence of the sequence 
$\left(\frac{d(x_n,x_{n+1})}{\gamma_n}\right)$
towards $0$.\\[1mm]
{\bf Proof of claim:}  We reason as in the proof of \cite[Lemma 8.3.(ii)]{KohLeuNic18}. 
Let $k \in \mathbb{N}$. By the proof of Lemma~\ref{2-lem-2}, 
\begin{equation}\label{claim-main-1}
\sum_{n=0}^{\infty}d^2(x_n,x_{n+1})\leq b^2. 
\end{equation}
Assume that for all  $n\in\{0,\ldots, \Sigma_{b,\theta}(k)\}$ we have that 
$\frac{d(x_n,x_{n+1})}{\gamma_n}>\frac1{k+1}$. It follows that 
$$\sum_{n=0}^{\Sigma_{b,\theta}(k)}d^2(x_n,x_{n+1}) > 
\sum_{n=0}^{\Sigma_{b,\theta}(k)}\gamma_n^2\frac1{(k+1)^2}=
 \frac1{(k+1)^2}\sum_{k=0}^{\theta(b^2(k+1)^2)}\gamma_n^2\geq b^2.$$
We get a contradiction with \eqref{claim-main-1}. Thus, there exists $N\leq \Sigma_{b,\theta}(k)$ such that 
$\frac{d(x_N,x_{N+1})}{\gamma_n}\leq \frac1{k+1}$. By $(C2)$, the claim follows. 
\hfill $\blacksquare$\\[2mm]

Let $k \in \mathbb{N}$ and $n \geq \Psi_{b,\theta,\varphi}(k)$. Set $n':=n-1$. Then
$n' \geq \Sigma_{b,\theta}\left(\left\lceil \frac{2b}{\varphi\left(\frac1{k+1}\right)} \right\rceil \right),$
hence, by the claim,
$$\frac{d(x_{n'},x_{n'+1})}{\gamma_{n'}} \leq 
\frac1{\left\lceil \frac{2b}{\varphi\left(\frac1{k+1}\right)} \right\rceil + 1}
\leq \frac1{\frac{2b}{\varphi\left(\frac1{k+1}\right)}} = \frac1{2b}\cdot\varphi\left(\frac1{k+1}\right).$$

Applying Lemma~\ref{qp} for $x:=x_{n'}$, $z:=p$, $T:=T_{n'}$ (and hence $\varphi$ becomes $\gamma_{n'}\varphi$), we get that
$$\gamma_{n'}\varphi(d(T_{n'}x_{n'},p)) \leq d(x_{n'},T_{n'}x_{n'})d(T_{n'}x_{n'},p).$$
Since $x_{n'+1} = T_{n'}x_{n'}$, it follows that
$$\varphi(d(x_{n'+1},p)) \leq \frac{d(x_{n'},x_{n'+1})}{\gamma_{n'}} \cdot d(x_{n'+1},p) 
\leq \frac1{2b} \cdot \varphi\left(\frac1{k+1}\right) \cdot b = \frac12 \varphi\left(\frac1{k+1}\right).$$
If $d(x_{n'+1},p) > \frac1{k+1}$, then $\varphi(d(x_{n'+1},p))\geq \varphi\left(\frac1{k+1}\right)
>\frac12 \varphi\left(\frac1{k+1}\right)$, since $\varphi$ is increasing and $\varphi\left(\frac1{k+1}\right)\ne 0$. 
We have got a contradiction. Thus, we must have
$$d(x_{n'+1},p)\leq \frac1{k+1},$$
which is what we wanted to show, since $n = n' +1$.\hfill \qed

\mbox{}

\noindent
{\bf Acknowledgements:} \\[1mm] 
Adriana Nicolae was partially supported by DGES (MTM2015-65242-C2-1-P). She would also like to acknowledge the Juan de la Cierva - Incorporaci\'{o}n Fellowship Program of the Spanish Ministry of Economy and Competitiveness.\\
Lauren\c tiu Leu\c stean and Andrei Sipo\c s were partially supported by a grant of the Romanian 
National Authority for Scientific Research, CNCS - UEFISCDI, project 
number PN-II-ID-PCE-2011-3-0383.

\end{document}